\newtheorem{theorem}{Theorem}[section]
\newtheorem{lem}[theorem]{Lemma}
\newtheorem{proposition}[theorem]{Proposition}
\newtheorem{cor}[theorem]{Corollary}
\theoremstyle{definition}
\newtheorem{dfn}[theorem]{Definition}
\newtheorem{ex}[theorem]{Example}
\newtheorem{claim}[theorem]{Claim}
\numberwithin{theorem}{section}
\newenvironment{theorem_no_number}[1][]{\begin{trivlist}
\item[\hskip \labelsep {\bfseries Theorem \def\temp{#1}\ifx\temp\empty  #1\else  #1\fi
.}] \itshape}  {\end{trivlist}}
\numberwithin{theorem}{section}
\DeclareMathOperator{\id}{id}
\DeclareMathOperator{\im}{im}
\DeclareMathOperator{\dom}{dom}
\DeclareMathOperator{\Dom}{\delta}
\DeclareMathOperator{\IP}{IP}
\DeclareMathOperator{\Path}{Path}
\newcommand{\s}{\mathbf{s}}
\newcommand{\rr}{\mathbf{r}}
\newcommand{\Z}{\mathbb{Z}}
\newcommand{\N}{\mathbb{N}}
\newcommand{\InvPres}[2]{\mathrm{Inv}\langle #1 \mid #2 \rangle}
\newenvironment{AL}{\noindent\color{red} AL:}{}
\newcommand{\makeset}[2]{\left\lbrace #1 \;\middle|\;
  \begin{tabular}{@{}l@{}}
    #2
   \end{tabular}
  \right\rbrace}
\newcommand{\set}[2]{\{ #1 \,\mid\, #2\}}
\title[\texorpdfstring{$E$}{E}-disjunctive inverse semigroups]{\texorpdfstring{$\boldsymbol E$}{E}-disjunctive inverse semigroups}
\newcommand{\downset}{\hspace{-2pt} \downarrow}
\author{Luna Elliott, Alex Levine and James Mitchell}
\subjclass[2020]{20M18}
\keywords{inverse semigroup, \(E\)-disjunctive, idempotent-pure}
\begin{document}
\begin{abstract}
  In this paper we provide an overview of the class of inverse semigroups $S$ such that every congruence on $S$ relates at least one idempotent to a non-idempotent; such inverse semigroups are called \textit{$E$-disjunctive}. This overview includes the study of the inverse semigroup theoretic structure of $E$-disjunctive semigroups; a large number of natural examples; some asymptotic results establishing the rarity of such inverse semigroups; and a general structure theorem for all inverse semigroups where the building blocks are $E$-disjunctive.
\end{abstract}
\maketitle
\part{In the beginning}
\section{Introduction}

In this paper, we are concerned with a natural type of inverse semigroup. Recall
that a \textit{semigroup} is just a set with an associative binary operation,
and an \textit{inverse semigroup} is a semigroup $S$ where for every $x\in S$
there exists a unique $x^{-1} \in S$ such that $x x^{-1} x=x$ and $x^{-1} x
x^{-1} = x^{-1}$. Inverse semigroups have been extensively studied in the
literature since their inception. Roughly speaking, the class of inverse
semigroups lies somewhere between the classes of semigroups and groups, having
more structure, in general, than semigroups, and somewhat less structure than
groups. If $S$ is a semigroup, then an equivalence relation $\rho\subseteq S \times S$ is a
\textit{congruence} if whenever $(x, y)\in \rho$ and $s\in S$, it follows that
$(xs, ys), (sx, sy)\in \rho$ also. Congruences are to semigroups what normal
subgroups are to groups. In this paper, we are interested in the class of inverse
semigroups $S$ such that every congruence on $S$ relates at least one idempotent
to a non-idempotent element of $S$. Such inverse semigroups are called
\textit{\(E\)-disjunctive}; see~\cite[III.4]{petrich_book} for further information.

In this paper, we study the inverse semigroup theoretic structure of
$E$-disjunctive semigroups; give a large number of natural examples; give some
asymptotic results establishing the rarity of such inverse semigroups; and 
prove a general structure theorem for all inverse semigroups which can be built
from $E$-disjunctive inverse semigroups.

A congruence is called \textit{idempotent-pure} if it never relates an
idempotent to a non-idempotent. Idempotent-pure congruences have received
significant attention since the study of inverse semigroups commenced; see, for
example, \cites{Clifford_IP, Bernd, Kambites2015,
margolis_meakin, OCarroll1976, OCarroll75, PetrichReilly82}. Introduced by Green~\cite{green_cong},
they preserve much of the important structure of inverse semigroups and have
multiple equivalent definitions of different flavours. If $\rho$ is a congruence on an inverse semigroup \(S\), then the \emph{kernel} of $\rho$ is the (normal) inverse subsemigroup of $S$ consisting of the congruence classes of the idempotents, and the \emph{trace} is the restriction of the congruence to the semilattice of idempotents.
Conversely, distinct congruences have distinct kernel-trace pairs (see \cite[Section 5.3]{Howie}).
Hence
idempotent-pure congruences are those with trivial kernel, and thus are entirely
determined by their restriction to the idempotents of a given inverse semigroup.

\textit{\(E\)-disjunctive} inverse semigroups are those with no non-trivial
idempotent-pure congruences. Every inverse semigroup has an \(E\)-disjunctive
quotient by its syntactic congruence on its idempotents. It is not difficult to
show that the symmetric inverse monoid is \(E\)-disjunctive, and so every
inverse semigroup embeds into an \(E\)-disjunctive inverse semigroup. Slightly
more non-trivial is the proof that every inverse semigroup occurs as the
homomorphic image of an \(E\)-disjunctive inverse semigroup
(\cref{cor:all-qts}), thus showing that this class captures a large variety of
inverse semigroups. 

There are a modest number of papers in the literature about \(E\)-disjunctive
inverse semigroups. The first use of the term that we know of is in Petrich
\cite{petrich_book} from 1984. Shortly after in 1985,
Yoshida~\cite{yoshida_E_dis} published a short note on \(E\)-disjunctive inverse
semigroups, where it is shown that the class of \(E\)-disjunctive inverse
semigroups is closed under passing to full inverse subsemigroups; and an
alternative definition of \(E\)-disjunctivity was given. Yoshida also noted that
an earlier work of Alimpić and Krgović \cite{Clifford_IP} fully classifies when
a Clifford inverse semigroup is \(E\)-disjunctive through the description of
idempotent-pure homomorphisms. Additional classifications of \(E\)-disjunctivity
were provided by Li and Zhang \cite{Li_Zhang_E_dis}. Petrich and Reilly
\cite{petrich_reilly1}, and Gigon \cite{Gigon_E_dis} have also studied
\(E\)-disjunctivity in the non-inverse case.

This paper has three parts. In the first, we cover the basic properties of
$E$-disjunctive inverse semigroups, and their interactions with the standard
notions related to inverse semigroups (\cref{sec:basic_props}). These standard
notions include: the natural partial order; adjoining identities and zeros; and
basic closure properties such as direct products (\cref{sec:npo10}). In
\cref{sec:wreath-prod} we describe some circumstances under which wreath
products are $E$-disjunctive (\cref{prop:first_wreath},
\cref{thm_final_wreath}).

In the second part, we consider a compendium of examples of naturally occurring
$E$-disjunctive semigroups. These include the symmetric inverse monoids $I_X$ on
any set $X$ with at least 2 elements (see \cite[Section 5.1]{Howie} or the start
of \cref{section-compendium} for the definition, and
\cref{ex:symmetric_inverse_monoid} for the proof of $E$-disjunctivity); the dual
symmetric inverse monoids (\cref{section-compendium} and
\cref{ex:dual_symmetric_inverse_monoid}); some minimal examples of
$E$-disjunctive semigroups with certain properties (\cref{ex:GAP}); an infinite
finitely generated Thompson's group-like $E$-disjunctive inverse monoid
(\cite{CannonFloydParry} and \cref{ex:thompsonsV}); a proof that the arithmetic
inverse monoid from~\cite{Hines} is $E$-disjunctive in \cref{ex:AIM}. Graph
inverse semigroups arise naturally from the study of Leavitt path algebras. 
Such semigroups have been studied extensively in the literature in recent
years, see for example \cite{AnagnostopoulouMerkouri2024, JonesLawson, LuoWang, LuoWangWei, MeakinWang, 
MesyanMitchellMoraynePeresse,  MesyanMitchell,
Wang2019}. In \cref{section-gis}, we characterise the
idempotent-pure congruences on graph inverse semigroups in
\cref{thm:wang-triple-ip}, and characterise graph inverse semigroups that are
$E$-disjunctive in terms of the underlying graphs in \cref{Thm:graph_inverse}.
In the final section of this part of the paper, we characterise the finite
monogenic $E$-disjunctive inverse semigroups
(\cref{subsection-q-semigroups-are-semigroups}); and use this to show that the
number of monogenic $E$-disjunctive inverse semigroups as a proportion of all
monogenic inverse semigroups of order $n$ is asymptotically $0$
(\cref{cor:monogenic-proportion}).

In the third and final part of the paper we consider various structural
properties of $E$-disjunctive semigroups. In \cref{sec:ratio-idempotents}, we
show that there are fairly restrictive bounds on the number of idempotents and
non-idempotent elements in finite $E$-disjunctive semigroups
(\cref{idem_bound_thm}). We explore the extent to which information about an
arbitrary inverse semigroup can be recovered from its maximal $E$-disjunctive
image in \cref{sec:maximage}. In \cref{sec:mcealister}, we reprove a theorem
from~\cite{OCarroll75}\footnote{The authors of the present paper only discovered
\cite{OCarroll75} at a late stage of the preparation of this paper and prove
the characterization independently. The theorem and its proof are included for
the sake of completeness} which provides a means of constructing any inverse
semigroup from an $E$-disjunctive semigroup acting on a partially ordered set
(\cref{thm:Q-thm}). This theorem implies McAlister's famous
$P$-theorem from~\cite{McAlister} which characterises the $E$-unitary inverse
semigroups via groups acting on partially ordered sets. 

  \begin{table}
    \centering
   \begin{tabular}{l||r|r|r|r|r|r}
     $n$ & inverse & \(E\)-unitary  & $E$-disjunctive & \(E\)-disjunctive \\ 
      & semigroups \cite{OEIS} & (non-semilattice) & inverse semigroups & inverse monoids \\ \hline
     0   & 1    & 0        & 1 &  0\\
     1   & 1    & 0        & 1 &  1\\
     2   & 2    & 1        & 1 &  1\\
     3   & 5    & 2        & 2 &  2\\
     4   & 16   & 6       & 4  &  4\\
     5   & 52   & 12       & 8 &  6\\
     6   & 208  & 39       & 18&  15\\
     7   & 911     & 120     & 40 & 28\\
     8   & 4,637   & 483     & 101 & 68\\
     9   & 26,422  & 2,153     & 276 & 165 \\
     10  & 169,163 & 11,325     & 761 & 414 \\
     11  &1,198,651 &   67,570      &2,422 & 1,202 \\
     12  &9,324,047 &   453,698     &7,630             & 3,458  
   \end{tabular} \medskip
   
    \caption{Numbers of isomorphisms types of inverse semigroups of order \(n\)
    with certain properties; computed using the \textsf{GAP} package
    \textsf{Semigroups} \cite{Mitchell2023aa}, and \cite{MalandroNonLatticesPage,
    MalandroLatticesPage, Malandro, InverseSemigroupsWebpage}.}
    \label{tab:numbers}
\end{table}

\section{Basic Properties}
\label{sec:basic_props} 
In this section we give some of the basic properties of \(E\)-disjunctive
inverse semigroups. We also show how to construct new examples from old: via ideals (\cref{ideal_lem}); full subsemigroups
(\cref{full-subsemi}); direct products
(\cref{prop:dir_prod}); adjoining a zero or identity (\cref{cor:adj-1} and
 \cref{cor:add_zero}); zero direct unions (\cref{prop:0-direct}); and wreath
 products (\cref{thm_final_wreath}). We will be using \(E(S)\) to denote the set of idempotents
in an inverse semigroup \(S\).

A congruence \(\rho\) on a semigroup \(S\) is called \textit{idempotent-pure} if
\((s, e) \in \rho\) and \(e \in E(S)\) implies that \(s \in E(S)\).

  \begin{dfn}[\textbf{\(E\)-disjunctive.}]
    An inverse semigroup \(S\) is called \textit{\(E\)-disjunctive} if the only 
    idempotent-pure congruence on \(S\) is the trivial congruence $\Delta_S$.
  \end{dfn}

The numbers of $E$-disjunctive inverse semigroups of size $n$ for some small
values of $n$ are shown in \cref{tab:numbers}.

  \begin{ex}
    \label{ex:eunitary}
    Every group is an \(E\)-disjunctive inverse semigroup
    and symmetric inverse monoids on a set \(X\) are \(E\)-disjunctive if and only if $|X| \neq 1$; see \cref{section-compendium}.
    The free inverse monoids and the bicyclic monoid defined by
    the presentation $\langle b, c\mid bc = 1\rangle$ are not \(E\)-disjunctive; see \cref{section-gis}
    for more details.
  \end{ex}
  
  A useful tool when studying \(E\)-disjunctive inverse semigroups is the
  syntactic congruence with respect to the set of idempotents. This is the
  maximum idempotent-pure congruence on any inverse semigroup, and so will be
  trivial if and only if the semigroup is \(E\)-disjunctive.

  \begin{dfn}[\textbf{Syntactic congruence.}]
    Let \(S\) be an inverse semigroup. The \textit{syntactic congruence} (with
    respect to \(E(S)\))
    \(\rho\) on \(S\) is defined by \((s, \ t) \in \rho\) if and only if
    \[
      \alpha s \beta \in E(S) \quad\text{if and only if}\quad \alpha t \beta \in E(S),
    \]
    for all \(\alpha, \ \beta \in S^1\) where $S^1$ is the monoid obtained by
    adjoining an identity to $S$.

    Since the syntactic congruence with respect to \(E(S)\) is the only
    syntactic congruence we will be using, we will use the term ``syntactic
    congruence'' to mean this exclusively.
  \end{dfn}

The following lemma is well-known, we include a proof for completeness.

\begin{lem}\label{lem:largest-ip-cong}
    If \(S\) is an inverse semigroup, then the syntactic congruence \(\rho\) on
    \(S\) is the largest idempotent-pure congruence on \(S\) with respect to containment.
\end{lem}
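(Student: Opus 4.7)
The plan is to verify two things: first, that $\rho$ is itself an idempotent-pure congruence on $S$; second, that any idempotent-pure congruence $\sigma$ on $S$ satisfies $\sigma \subseteq \rho$.

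For the first part, symmetry and transitivity of $\rho$ are immediate from the symmetric ``if and only if'' in its definition, and reflexivity is trivial. To see that $\rho$ is compatible with multiplication, I would take $(s,t) \in \rho$ and $u \in S$, and observe that for any $\alpha, \beta \in S^1$ we have $\alpha(su)\beta = \alpha s (u\beta)$ and $\alpha(tu)\beta = \alpha t(u\beta)$; since $u\beta \in S^1$, the defining condition for $(s,t) \in \rho$ applied with the pair $(\alpha, u\beta)$ gives that $\alpha(su)\beta \in E(S)$ iff $\alpha(tu)\beta \in E(S)$, so $(su, tu) \in \rho$. The argument for $(us, ut) \in \rho$ is symmetric. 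To check idempotent-purity, suppose $(s, e) \in \rho$ with $e \in E(S)$; taking $\alpha = \beta = 1$ in $S^1$ yields $e = 1 \cdot e \cdot 1 \in E(S)$, hence $s = 1 \cdot s \cdot 1 \in E(S)$.

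For the second part, let $\sigma$ be any idempotent-pure congruence on $S$, and suppose $(s,t) \in \sigma$. Given $\alpha, \beta \in S^1$, compatibility of $\sigma$ (extended trivially to $S^1$ by adjoining the class $\{1\}$, which remains a congruence) yields $(\alpha s \beta, \alpha t \beta) \in \sigma$, and both elements lie in $S$. If $\alpha s \beta \in E(S)$, then idempotent-purity of $\sigma$ forces $\alpha t \beta \in E(S)$; the reverse implication is symmetric. Thus $(s,t) \in \rho$, proving $\sigma \subseteq \rho$.

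Neither step presents a genuine obstacle: the only point requiring mild care is the handling of the adjoined identity $1 \in S^1$ when transferring between the two congruences, which is resolved by noting that extending $\sigma$ to $S^1$ via $\{1\} \times \{1\}$ preserves both the congruence property and idempotent-purity (since $1$ is idempotent in $S^1$ and the only element in its class).
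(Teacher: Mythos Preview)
Your proposal is correct and follows essentially the same approach as the paper: both verify that $\rho$ is an idempotent-pure congruence (using $\alpha=\beta=1$ for purity) and then show any idempotent-pure congruence $\tau$ is contained in $\rho$ by noting that $(\alpha s\beta,\alpha t\beta)\in\tau$ for all $\alpha,\beta\in S^1$. You are slightly more explicit about the equivalence-relation axioms and the handling of the adjoined identity, but the argument is the same.
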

\begin{proof}
    We first check that \(\rho\) is an idempotent-pure congruence. It is
    immediate from the definition that \(\rho\) is both a right and left
    congruence, hence \(\rho\) is a congruence. Suppose that \(e\in E(S)\) and
    \((e, s)\in \rho\).    Then \(1e1\in E(S)\), so by the definition of
    \(\rho\), \(s=1s1\in E(S)\).

    Let \(\tau\) be an idempotent-pure congruence on \(S\), and suppose that
    \((s, t)\in \tau\). Let \(\alpha, \beta\in S^1\). Then \[(\alpha s
    \beta,\alpha t \beta) \in \tau \] so as \(\tau\) is idempotent-pure,
    \(\alpha s \beta\in E(S)\) if and only if \(\alpha t \beta \in E(S)\). Hence
    \((s, t)\in \rho\).    
\end{proof}

The next lemma relates $E$-disjunctivity and the syntactic congruence.

  \begin{lem}[cf. Remark III.4.15($\delta$) in \cite{petrich_book}]
    \label{lem:syn-eq}
    Let \(S\) be an inverse semigroup. Then \(S\) is \(E\)-disjunctive if and
    only if the syntactic congruence is equality.
\end{lem}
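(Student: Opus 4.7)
The statement is an immediate corollary of the preceding \cref{lem:largest-ip-cong}, which establishes that the syntactic congruence $\rho$ is the largest idempotent-pure congruence on $S$ with respect to containment. So my plan is simply to extract the equivalence from that maximality, going through both directions without doing any further work on the structure of $\rho$ itself.

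For the forward direction, I would assume $S$ is $E$-disjunctive, meaning the trivial congruence $\Delta_S$ is the only idempotent-pure congruence on $S$. Since \cref{lem:largest-ip-cong} tells us that $\rho$ is itself an idempotent-pure congruence, we must have $\rho = \Delta_S$, i.e.\ the syntactic congruence is equality.

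For the backward direction, I would assume $\rho = \Delta_S$. Let $\tau$ be any idempotent-pure congruence on $S$. Then by the maximality clause of \cref{lem:largest-ip-cong}, $\tau \subseteq \rho = \Delta_S$, and so $\tau = \Delta_S$. Hence $S$ is $E$-disjunctive.

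There is really no obstacle: all the work of showing that $\rho$ is a congruence, is idempotent-pure, and contains every other idempotent-pure congruence has already been carried out in \cref{lem:largest-ip-cong}. The only thing to verify is the mild trivial check that $\Delta_S$ itself is idempotent-pure, which is used implicitly in the forward direction and holds because $(s, e) \in \Delta_S$ forces $s = e \in E(S)$.
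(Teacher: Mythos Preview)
Your proof is correct. The paper actually omits the proof of this lemma entirely (it is stated without proof, with only a reference to Petrich's book), presumably because it follows immediately from \cref{lem:largest-ip-cong} in exactly the way you describe.
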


The next result establishes that every inverse semigroup has an $E$-disjunctive
quotient. 

  \begin{lem}
    If $S$ is any inverse semigroup, then the quotient of $S$ by the syntactic congruence
    (which is idempotent-pure) is $E$-disjunctive. 
  \end{lem}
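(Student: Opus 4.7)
The plan is to reduce the claim to the maximality statement in \cref{lem:largest-ip-cong}. Let $\rho$ denote the syntactic congruence on $S$ and $\pi : S \to S/\rho$ the natural projection. To show $S/\rho$ is \(E\)-disjunctive I would take an arbitrary idempotent-pure congruence $\tau$ on $S/\rho$ and prove $\tau = \Delta_{S/\rho}$.

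The standard move is to pull $\tau$ back along $\pi$: define $\tilde\tau \subseteq S \times S$ by $(s, t) \in \tilde\tau$ if and only if $(\pi(s), \pi(t)) \in \tau$. As the preimage of a congruence under a homomorphism, $\tilde\tau$ is automatically a congruence on $S$, and it contains $\rho$ because $\tau$ contains the diagonal of $S/\rho$. If I can also show that $\tilde\tau$ is idempotent-pure on $S$, then \cref{lem:largest-ip-cong} gives $\tilde\tau \subseteq \rho$, hence $\tilde\tau = \rho$, and unfolding the definition of $\tilde\tau$ then forces $\tau$ to be trivial: any pair $([s],[t]) \in \tau$ satisfies $(s,t) \in \rho$, so $[s] = [t]$.

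The heart of the argument is verifying that $\tilde\tau$ is idempotent-pure. Suppose $(s, e) \in \tilde\tau$ with $e \in E(S)$. Then $(\pi(s), \pi(e)) \in \tau$ and $\pi(e) \in E(S/\rho)$, so $\pi(s) \in E(S/\rho)$ by idempotent-pureness of $\tau$. I would then invoke the standard fact that idempotents in the image of an inverse semigroup homomorphism lift to idempotents: whenever $\pi(s)$ is idempotent one has $\pi(ss^{-1}) = \pi(s)\pi(s)^{-1} = \pi(s)\pi(s) = \pi(s)$, so $(s, ss^{-1}) \in \rho$ with $ss^{-1} \in E(S)$. Since $\rho$ is itself idempotent-pure (\cref{lem:largest-ip-cong}), this forces $s \in E(S)$, as required. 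I expect no serious obstacle; the only delicate point is this idempotent-lifting step, which is precisely where the inverse semigroup hypothesis enters.
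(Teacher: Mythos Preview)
Your argument is correct. The paper states this lemma without proof, treating it as an immediate consequence of \cref{lem:largest-ip-cong} (the maximality of the syntactic congruence among idempotent-pure congruences) together with the correspondence between congruences on $S/\rho$ and congruences on $S$ containing $\rho$; your write-up spells out exactly this standard argument, including the one nontrivial step---that the pullback $\tilde\tau$ remains idempotent-pure---which you handle cleanly via $(s, ss^{-1}) \in \rho$.
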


    The following lemma provides an alternative means of showing that inverse semigroups
    are \(E\)-disjunctive to computing the syntactic congruence. 

  \begin{lem}
      \label{lem-idempotents}
      Let \(S\) be an inverse semigroup. If every idempotent-pure congruence \(\rho\)
      is trivial on \(E(S)\); that is, for all \(e \in E(S)\) the congruence class of \(e\) is \(\{e\}\),
      then \(S\) is \(E\)-disjunctive.
  \end{lem}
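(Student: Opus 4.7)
The statement is: if every idempotent-pure congruence on $S$ restricts to equality on $E(S)$, then the only idempotent-pure congruence is $\Delta_S$, which is exactly $E$-disjunctivity. The plan is to take an arbitrary idempotent-pure congruence $\rho$, pick any $(s,t) \in \rho$, and deduce $s = t$ using only the hypothesis plus the two defining features of $\rho$ (being idempotent-pure and being a congruence on an inverse semigroup).

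First I would record the standard fact that congruences on inverse semigroups respect inversion: $(s,t) \in \rho$ implies $(s^{-1}, t^{-1}) \in \rho$. Applying this together with the congruence property gives $(s^{-1}s, t^{-1}t) \in \rho$ and $(ss^{-1}, tt^{-1}) \in \rho$. Each of these pairs consists of idempotents, so the hypothesis (that $\rho$ is trivial on $E(S)$) forces
\[
s^{-1}s = t^{-1}t \qquad \text{and} \qquad ss^{-1} = tt^{-1}.
\]

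Next I would exploit the idempotent-pure property to promote this to equality of $s$ and $t$. Multiplying $(s,t) \in \rho$ on the right by $t^{-1}$ gives $(st^{-1}, tt^{-1}) \in \rho$. Since $tt^{-1} \in E(S)$ and $\rho$ is idempotent-pure, $st^{-1} \in E(S)$. But then $st^{-1}$ and $tt^{-1}$ are $\rho$-related idempotents, so by the hypothesis $st^{-1} = tt^{-1}$. Right-multiplying by $t$ gives $st^{-1}t = t$, and since $t^{-1}t = s^{-1}s$ we obtain $ss^{-1}s = t$, i.e.\ $s = t$.

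This shows $\rho \subseteq \Delta_S$, so $\rho = \Delta_S$, and since $\rho$ was an arbitrary idempotent-pure congruence, $S$ is $E$-disjunctive. The argument is essentially routine; the only conceptual step is noticing that the obstruction to an idempotent-pure congruence being equality is captured by the trace on $E(S)$, and all the manipulations above are simply the standard way to recover a kernel-trace style rigidity argument at the element level. There is no real obstacle — the proof is essentially a three-line calculation once one decides to feed the pair $(st^{-1}, tt^{-1})$ through the idempotent-pure property.
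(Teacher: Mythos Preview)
Your proof is correct, but it proceeds by a different route than the paper's. The paper invokes the kernel-trace description of congruences on inverse semigroups (see \cite[Theorem 5.3.3]{Howie}): an idempotent-pure congruence has kernel equal to $E(S)$, and the hypothesis says its trace is $\Delta_{E(S)}$; since a congruence is determined by its kernel and trace, and the trivial congruence has the same kernel and trace, $\rho = \Delta_S$.

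Your argument instead carries out the comparison at the element level, avoiding any appeal to the kernel-trace theorem. You effectively reprove the special case of that theorem needed here: from $(s,t)\in\rho$ you extract $s^{-1}s = t^{-1}t$ using the trace hypothesis, then use idempotent-purity plus the trace hypothesis again on the pair $(st^{-1}, tt^{-1})$ to conclude $st^{-1} = tt^{-1}$, whence $s = t$. This is more self-contained and arguably more illuminating for a reader unfamiliar with the kernel-trace machinery; the paper's version is shorter if one is willing to cite that machinery as a black box. Your closing remark that the argument is ``essentially the standard way to recover a kernel-trace style rigidity argument at the element level'' is exactly right.
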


  \begin{proof}
      Let \(\rho\) be an idempotent-pure congruence on \(S\). As \(\rho\) is idempotent-pure, the kernel of \(\rho\) is \(E(S)\). In addition, as the congruence
      is the trivial congruence when restricted to the idempotents,
      the trace of \(\rho\) is \(\Delta_{E(S)}\). Thus the kernel-trace method
      (see for example \cite[Theorem 5.3.3]{Howie}) tells us that \(\rho\) must be the trivial congruence.
      We have thus shown that every idempotent-pure congruence on \(S\) is trivial,
      and so \(S\) is \(E\)-disjunctive.
  \end{proof}

    We now consider various closure properties of the class of \(E\)-disjunctive
    inverse semigroups. This class is not closed under taking inverse
    subsemigroups. For example, every inverse semigroup is isomorphic to an
    inverse subsemigroup of some symmetric inverse monoid (by the Vagner-Preston Representation Theorem~\cite[Theorem 5.1.7]{Howie}), which is $E$-disjunctive (see \cref{ex:symmetric_inverse_monoid}).
    
    The class of $E$-disjunctive inverse semigroups is closed under passing to ideals.
    \begin{lem}
    \label{ideal_lem}
    Let \(I\) be an ideal of an \(E\)-disjunctive inverse semigroup \(S\). Then \(I\)
    is \(E\)-disjunctive.
  \end{lem}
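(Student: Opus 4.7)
The plan is to apply \cref{lem-idempotents}, which reduces the task to showing that every idempotent-pure congruence $\rho$ on $I$ is trivial on $E(I)$. So I would fix such a $\rho$ and a pair $(e,f)\in\rho$ with $e,f\in E(I)$, and aim to prove $e=f$ by showing that $(e,f)$ lies in the syntactic congruence on $S$; the latter is the equality by \cref{lem:syn-eq} together with the hypothesis that $S$ is $E$-disjunctive.

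It therefore suffices to verify that $\alpha e\beta\in E(S)$ if and only if $\alpha f\beta\in E(S)$ for every $\alpha,\beta\in S^{1}$. The first observation, which simplifies things considerably, is that $\alpha e\beta$ and $\alpha f\beta$ always lie in $I$ (since $I$ is an ideal and $e,f\in I$), so being idempotent in $S$ is the same as being idempotent in $I$ for these products.

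The heart of the argument is to produce $(\alpha e\beta,\alpha f\beta)\in\rho$. Although $\alpha$ and $\beta$ need not lie in $I$, the products $\alpha e$ and $e\beta$ do, so they may be used to multiply $(e,f)\in\rho$ within $I$. Specifically, I would first right-multiply $(e,f)$ by $e\beta\in I$ and then left-multiply by $\alpha e\in I$; after simplifying via $e^{2}=e$ and the commutativity of idempotents in an inverse semigroup (so that $efe=ef$), this yields $(\alpha e\beta,\alpha ef\beta)\in\rho$. A symmetric computation starting from $(f,e)\in\rho$ produces $(\alpha f\beta,\alpha ef\beta)\in\rho$, and transitivity then gives $(\alpha e\beta,\alpha f\beta)\in\rho$. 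Idempotent-purity of $\rho$ then delivers the required equivalence of memberships in $E(I)$, completing the verification.

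The main obstacle is precisely the one just overcome: $\rho$ is only a congruence on $I$, so one cannot directly apply $\rho$ after multiplication by elements of $S\setminus I$. The resolution is to absorb the outside multipliers $\alpha$ and $\beta$ into $I$ by pairing them with the idempotents $e$ and $f$ before invoking the congruence property, leveraging both the idempotency of $e,f$ and the commutativity of idempotents in the ambient inverse semigroup.
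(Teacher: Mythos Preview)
Your proof is correct, but it takes a different route from the paper's. The paper simply extends the idempotent-pure congruence $\rho$ on $I$ to $\bar\rho=\rho\cup\Delta_S$ on $S$, checks that $\bar\rho$ is again an idempotent-pure congruence (using that $I$ is an ideal), and concludes from the $E$-disjunctivity of $S$ that $\bar\rho$, and hence $\rho$, is trivial. Your approach instead invokes \cref{lem-idempotents} to reduce to idempotents and then, for $\rho$-related idempotents $e,f$, places $(e,f)$ in the syntactic congruence on $S$ by the absorption trick $\alpha e,\ e\beta\in I$ (and the commutativity of idempotents), finally appealing to \cref{lem:syn-eq}. The paper's argument is shorter and avoids the syntactic-congruence machinery; yours has the virtue of illustrating how \cref{lem-idempotents} and \cref{lem:syn-eq} can be deployed in practice, and the absorption manoeuvre you use to pass from $I$-congruence to $S^1$-multipliers is essentially the same computation needed to justify (in full detail) that $\bar\rho$ is a congruence in the paper's proof.
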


  \begin{proof}
    Suppose that \(\rho\) is an idempotent-pure congruence on \(I\). Define the binary relation
    \(\bar{\rho}\) on \(S\) by
    \[
      \bar{\rho} = \rho \cup \Delta_S.
    \]
    We will show that \(\bar{\rho}\) is an idempotent-pure congruence. It is immediate that
    \(\bar{\rho}\) is an equivalence relation and idempotent-pure. We will show \(\bar{\rho}\)
    is a congruence. Let \(s, \ t, \ x \in S\) and suppose \(s \bar{\rho} t\). Then \(s = t\), in
    which case \(sx = tx\) and so \(sx \rho tx\), otherwise \(s \rho t\), and \(s, \ t \in I\).
    As \(I\) is an ideal, \(sx, \ tx \in I\) and so \(sx \rho tx\). Thus \(\bar{\rho}\) is an
    idempotent-pure congruence on \(S\). Since \(S\) is \(E\)-disjunctive, \(\bar{\rho}\) is trivial,
    and so \(\rho\) is trivial, and \(I\) is \(E\)-disjunctive.
  \end{proof}
  
In the other direction, \(E\)-disjunctivity is not closed under passing to
inverse supersemigroups, because adjoining an identity and then another identity will
result in a non-\(E\)-disjunctive semigroup. However, a semigroup will be
\(E\)-disjunctive if it has an \(E\)-disjunctive full inverse subsemigroup.
Recall that an inverse subsemigroup \(T\) of an inverse semigroup \(S\) is
\textit{full} if \(E(S) = E(T)\). We include the proof for completeness.

\begin{lem}[\cite{yoshida_E_dis}, Lemma 1]\label{full-subsemi}
    If \(S\) is an inverse semigroup with a full \(E\)-disjunctive subsemigroup \(T\), then \(S\) is \(E\)-disjunctive.
\end{lem}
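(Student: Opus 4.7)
The plan is to take an arbitrary idempotent-pure congruence \(\rho\) on \(S\), restrict it to \(T\), use \(E\)-disjunctivity of \(T\) to conclude the restriction is trivial, and then invoke \cref{lem-idempotents} to conclude \(\rho\) is trivial on all of \(S\).

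More precisely, let \(\rho\) be an idempotent-pure congruence on \(S\), and set \(\rho_T = \rho \cap (T \times T)\). First I would verify that \(\rho_T\) is a congruence on \(T\): it is an equivalence relation as a restriction of one, and if \((x, y) \in \rho_T\) and \(t \in T\), then \((xt, yt)\) and \((tx, ty)\) lie in \(\rho\), and also in \(T \times T\) since \(T\) is a subsemigroup. Next, \(\rho_T\) is idempotent-pure: if \((e, s) \in \rho_T\) with \(e \in E(T)\), then since \(E(T) \subseteq E(S)\) and \(\rho\) is idempotent-pure on \(S\), we have \(s \in E(S) = E(T)\), where the last equality uses that \(T\) is full.

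Since \(T\) is \(E\)-disjunctive, \(\rho_T = \Delta_T\). In particular, the restriction of \(\rho\) to \(E(S) = E(T) \subseteq T\) is the diagonal \(\Delta_{E(S)}\); that is, the congruence class of every idempotent \(e \in E(S)\) under \(\rho\) is \(\{e\}\). Applying \cref{lem-idempotents}, we conclude that \(S\) is \(E\)-disjunctive.

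There is essentially no hard step here: the only thing to be careful about is that the definition of ``full'' gives \(E(T) = E(S)\) exactly (not just containment), which is what allows the conclusion at the idempotent level in \(T\) to transfer verbatim to the idempotent level in \(S\), so that \cref{lem-idempotents} applies directly.
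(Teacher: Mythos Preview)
Your proof is correct and follows essentially the same approach as the paper's. Both arguments restrict an idempotent-pure congruence on \(S\) to \(T\), use \(E\)-disjunctivity of \(T\) to see it is trivial there, and then invoke the kernel--trace fact behind \cref{lem-idempotents}; the only cosmetic difference is that the paper phrases it contrapositively (a non-trivial idempotent-pure congruence on \(S\) must identify two idempotents, hence restricts to a non-trivial idempotent-pure congruence on \(T\)), while you argue directly.
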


\begin{proof}
    Any non-trivial idempotent-pure congruence on \(S\) identifies two idempotents of \(S\). Thus every congruence on \(S\) identifies two elements of \(T\). It follows that any non-trivial idempotent-pure congruence on \(S\), induces a non-trivial idempotent-pure congruence on \(T\), so none can exist.
\end{proof}

Another construction under which \(E\)-disjunctivity is preserved is taking finite direct products.

\begin{proposition}
    \label{prop:dir_prod}
    Let \(S_1\) and \(S_2\) be non-empty inverse semigroups. Then \(S_1\) and \(S_2\) are
    \(E\)-disjunctive if and only if \(S_1 \times S_2\) is \(E\)-disjunctive.
\end{proposition}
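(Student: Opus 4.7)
The plan is to prove both implications separately. For the forward direction, I argue by contrapositive: assuming without loss of generality that $S_1$ admits a non-trivial idempotent-pure congruence $\rho_1$, I lift it to a congruence $\bar\rho$ on $S_1 \times S_2$ by setting $((s_1, s_2), (t_1, t_2)) \in \bar\rho$ if and only if $(s_1, t_1) \in \rho_1$ and $s_2 = t_2$. The congruence axioms follow directly from those of $\rho_1$; idempotent-purity uses $E(S_1 \times S_2) = E(S_1) \times E(S_2)$ combined with idempotent-purity of $\rho_1$; and non-triviality uses $S_2 \neq \emptyset$, allowing one to pair any witness $(s_1, t_1) \in \rho_1$ with $s_1 \neq t_1$ with an arbitrary element of $S_2$.

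For the backward direction, assume $S_1$ and $S_2$ are $E$-disjunctive and let $\rho$ be an idempotent-pure congruence on $S_1 \times S_2$. By \cref{lem-idempotents} it suffices to show that $\rho$ is trivial on $E(S_1) \times E(S_2) = E(S_1 \times S_2)$. Given $((e_1, e_2), (f_1, f_2)) \in \rho$ with both sides idempotent, I first right-multiply by $(f_1, f_2)$ and apply transitivity; this replaces $(f_1, f_2)$ by the meet $(e_1 f_1, e_2 f_2)$, reducing to the case $(f_1, f_2) \leq (e_1, e_2)$. In that case, right-multiplying by the idempotent $(e_1, f_2)$ yields $((e_1, f_2), (f_1, f_2)) \in \rho$, and right-multiplying by $(f_1, e_2)$ yields $((f_1, e_2), (f_1, f_2)) \in \rho$. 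From the first, I define $\sim_1$ on $S_1$ by $s \sim_1 t$ iff $((s, f_2), (t, f_2)) \in \rho$; a direct check shows $\sim_1$ is a congruence, and idempotent-purity of $\sim_1$ follows from that of $\rho$ together with $f_2 \in E(S_2)$. By $E$-disjunctivity of $S_1$, $\sim_1$ is trivial, forcing $e_1 = f_1$; the symmetric argument with $\sim_2$ on $S_2$ gives $e_2 = f_2$.

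The main technical obstacle is that congruences on a direct product of inverse semigroups need not decompose as products of congruences on the factors, so a naive coordinate-projection argument fails. The workaround is the use of the mixed idempotents $(e_1, f_2)$ and $(f_1, e_2)$, which only become valid substitutions after the reduction to $f \leq e$; they let one isolate each factor's contribution to $\rho$ while pinning down the other coordinate at a fixed idempotent, which is precisely what is needed to build an idempotent-pure congruence on each $S_i$.
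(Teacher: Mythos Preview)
Your arguments are correct, though you have the direction labels reversed: what you call the ``forward direction'' is in fact the contrapositive of the implication $S_1\times S_2$ $E$-disjunctive $\Rightarrow$ $S_1,S_2$ $E$-disjunctive, and your ``backward direction'' is the implication $S_1,S_2$ $E$-disjunctive $\Rightarrow$ $S_1\times S_2$ $E$-disjunctive. Once the labels are fixed, your proof of the first of these is identical to the paper's.

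For the other implication your route differs from the paper's. The paper works with the syntactic congruence: given idempotents $(e_1,e_2)$ and $(f_1,f_2)$ related by it, the authors substitute $\bar\alpha=(\alpha,e_2f_2)$ and $\bar\beta=(\beta,e_2f_2)$ into the defining biconditional to project the syntactic condition down to $S_1$, concluding $e_1=f_1$ from $E$-disjunctivity of $S_1$. You instead start from an arbitrary idempotent-pure congruence $\rho$, reduce to comparable idempotents, and then build a ``slice'' congruence $\sim_1$ on $S_1$ by freezing the second coordinate at the fixed idempotent $f_2$; $E$-disjunctivity of $S_1$ forces $\sim_1$ to be trivial. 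Your approach is arguably more transparent, since it avoids the case analysis (handling $\alpha$ or $\beta$ equal to the adjoined identity) that the paper's syntactic-congruence argument needs; the paper's approach is slightly shorter because it does not require the preliminary reduction to $(f_1,f_2)\leq(e_1,e_2)$. One small point worth spelling out in your write-up: after the reduction you conclude equality only for the pair $\big((e_1,e_2),(e_1f_1,e_2f_2)\big)$, which gives $e_i\le f_i$; you should note that the same argument applied with the roles of $(e_1,e_2)$ and $(f_1,f_2)$ swapped gives $f_i\le e_i$, completing the original claim.
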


\begin{proof}
\((\Rightarrow)\)
    Note that \(E(S_1) \times E(S_2) = E(S_1 \times S_2)\).
    Let \((e_1, e_2), (f_1, f_2)\in E(S_1 \times S_2)\) be arbitrary.
    
    We show that \(\bar{\alpha} (e_1, e_2) \bar{\beta} \in E(S_1)\) if and only if
    \(\bar{\alpha} (f_1, f_2) \bar{\beta} \in E(S_1)\) for all \(\bar{\alpha},
    \bar{\beta} \in (S_1 \times S_2)^1\) implies that \((e_1, e_2)=(f_1, f_2)\).
    This shows that the trace of the syntactic congruence on \(S_1 \times S_2\)
    is just equality on the idempotents. The kernel of the syntactic congruence
    is \(E(S_1 \times S_2)\), and congruences of inverse semigroups are
    determined by their kernel and trace (see, for example Theorem 5.3.3 of
    \cite{Howie}), this implies that the syntactic congruence is equality (as it
    has the same kernel and trace).

    Suppose that the following statement holds for all \(\bar{\alpha},
    \bar{\beta} \in (S_1 \times S_2)^1\): \(\bar{\alpha} (e_1, e_2) \bar{\beta}
    \in E(S_1) \iff \bar{\alpha} (f_1, f_2) \bar{\beta} \in E(S_1)\). We show
    that \(e_1=f_1\), the other coordinate follows by symmetry. Let \(\alpha,
    \beta \in S_1^1\) be arbitrary.

By the previous statement, when \(\alpha, \beta \in S_1\) 
    \begin{align*}
        (\alpha, e_2f_2) (e_1, e_2) (\beta, e_2f_2) \in E(S_1\times S_2) &\iff (\alpha, e_2f_2) (f_1, f_2) (\beta, e_2f_2)\in E(S_1 \times S_2).
    \end{align*}
    In the case that \(\alpha\) or \(\beta\) is the identity \(1\) the above
    equivalence still holds as \((\alpha, e_2f_2)\) and/or (\(\beta, e_2f_2) \)
    can be replaced with \(1\in (S_1\times S_2)^1\) and the resulting statements
    are equivalent. So
       \begin{align*}
        (\alpha e_1\beta, e_2f_2) \in E(S_1\times S_2) &\iff (\alpha f_1\beta, e_2f_2) \in E(S_1\times S_2)
    \end{align*}
    and hence \(\alpha e_1\beta\in E(S_1)\) if and only if \(\alpha f_1\beta\). Since \(\alpha\)
    and \(\beta\) were arbitrary and \(S_1\) is E-disjunctive, it follows that
    \(e_1=f_1\).

    \((\Leftarrow)\) Suppose that \(S_1\times S_2\) is \(E\)-disjunctive. Let
    \(\rho\) be an idempotent-pure congruence on \(S_1\). We show that \(\rho\)
    is trivial. Let \(\rho'\) be the congruence on \(S_1\times S_2\) defined by
    \((s_1, s_2)\rho' (t_1, t_2)\) if and only if \(s_1 \rho t_1\) and
    \(s_2=t_2\). As \(\rho'\) is idempotent-pure, it follows that \(\rho'\) is
    equality. Hence if \(S_2\) is not empty, it follows that \(\rho\) is also
    equality.
\end{proof}

Finally, we will mention the following result in \cref{sec:mcealister}.

  \begin{proposition}[{\cite[Proposition 2.4.5]{inverse_semigroups}}]\label{prop:comp-rel}
      A congruence \(\rho\) on an inverse semigroup \(S\) is idempotent-pure if
      and only if \(\rho\) is contained in the compatibility relation $\set{(a,
      b)\in S^2}{ab^{-1}, a^{-1}b\in E(S)}$.
  \end{proposition}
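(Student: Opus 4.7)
The plan is to prove both implications directly. For the forward direction, I assume \(\rho\) is idempotent-pure and take \((a,b) \in \rho\). Multiplying both components on the right by \(b^{-1}\) yields \((ab^{-1}, bb^{-1}) \in \rho\); since \(bb^{-1} \in E(S)\) and \(\rho\) is idempotent-pure, \(ab^{-1} \in E(S)\). The symmetric argument, multiplying on the left by \(a^{-1}\), gives \(a^{-1}b \in E(S)\), so \(\rho\) is contained in the compatibility relation.

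For the reverse direction, I assume \(\rho\) is contained in the compatibility relation, and take \((e,s) \in \rho\) with \(e \in E(S)\); the goal is to show \(s \in E(S)\). Because congruences on inverse semigroups respect inversion, \((e, s^{-1}) \in \rho\). Multiplying the pairs \((e,s)\) and \((e, s^{-1})\) componentwise, I obtain \((ee, ss^{-1}) = (e, ss^{-1}) \in \rho\); combining this with the symmetric pair \((s,e) \in \rho\) by transitivity yields \((s, ss^{-1}) \in \rho\). Applying the compatibility hypothesis to \((s, ss^{-1})\) — and noting that \((ss^{-1})^{-1} = ss^{-1}\) — gives \(s^{-1}(ss^{-1}) \in E(S)\). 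The defining identity \(s^{-1}ss^{-1} = s^{-1}\) then forces \(s^{-1} \in E(S)\), and since an idempotent is its own inverse, \(s = (s^{-1})^{-1} = s^{-1} \in E(S)\).

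The step I expect to be the actual content of the proof is locating the right multiplication to run through in the reverse direction. The key is combining \((e,s)\) and \((e, s^{-1})\) to produce the pair \((s, ss^{-1})\), because compatibility applied to exactly this pair makes immediate use of the defining inverse identity \(s^{-1}ss^{-1} = s^{-1}\) to conclude that \(s^{-1}\) is idempotent in a single step. Without this observation, one could easily get lost chasing consequences such as \(s^2, s^3, \dots \in E(S)\) (which do follow from \((s, s^{-1}) \in \rho \subseteq \sim\)) without ever concluding that \(s\) itself is idempotent.
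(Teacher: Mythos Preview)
Your proof is correct. The forward direction is the standard one-line argument, and your reverse direction is clean: deriving \((s, ss^{-1}) \in \rho\) and then reading off \(s^{-1} = s^{-1}(ss^{-1}) \in E(S)\) from the compatibility hypothesis is exactly the right move, since it exploits the identity \(s^{-1}ss^{-1} = s^{-1}\) directly.

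Note, however, that the paper does not supply its own proof of this proposition; it simply quotes it from \cite[Proposition 2.4.5]{inverse_semigroups}. So there is nothing to compare your argument against within the paper itself. Your proof stands on its own and would serve perfectly well if one wanted to include a proof for completeness.
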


\section{The natural partial order, identities and zeros}
\label{sec:npo10}

In this section, we consider the interaction of the notion of $E$-disjunctivity
and the natural partial order on any inverse semigroup, and some applications.
Recall that \textit{the natural partial order} $\leq$ on an inverse semigroup
\(S\) is defined by $s\leq t$ if there exists $e\in E(S)$ such that $s = et$.

We define another partial order \(\preceq\) on an \(E\)-disjunctive inverse
semigroup \(S\) so that \(s \preceq t\) if
\[
        \alpha s \beta \in E(S) \impliedby \alpha t \beta \in E(S)
\]
for all \(\alpha, \ \beta \in S^1\).

    \begin{proposition}
      \label{partial_order_prop}
      The partial order \(\preceq\) on an \(E\)-disjunctive inverse semigroup is
      equal to the natural partial order.
    \end{proposition}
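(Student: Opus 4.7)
The plan is to prove both inclusions. For $\leq \Rightarrow \preceq$, I would use that the natural partial order is compatible with multiplication in an inverse semigroup, and that $E(S)$ is a subsemilattice (in particular, any element below an idempotent is itself an idempotent): if $s = et$ for some $e \in E(S)$ and $\alpha t \beta \in E(S)$, then $\alpha s \beta \leq \alpha t \beta$ lies below an idempotent and is therefore idempotent.

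For the reverse direction $\preceq \Rightarrow \leq$, I would first extract a compatibility identity by specialising the defining implication of $\preceq$: taking $\alpha = t^{-1}$, $\beta = 1$ forces $t^{-1} s \in E(S)$, whence $s^{-1} t = (t^{-1} s)^{-1} \in E(S)$. The crux of the argument is then to introduce the auxiliary element $u = s s^{-1} t$, which visibly satisfies $u \leq t$ (since $s s^{-1} \in E(S)$). If I can show $s = u$, then $s \leq t$ follows immediately. The strategy is to prove that $(s, u)$ lies in the syntactic congruence and then invoke \cref{lem:syn-eq} to conclude $s = u$.

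To verify that $(s, u)$ lies in the syntactic congruence, both implications of the biconditional must be checked. For $\alpha u \beta \in E(S) \Rightarrow \alpha s \beta \in E(S)$, rewrite $\alpha u \beta = (\alpha s s^{-1}) t \beta$ and apply $s \preceq t$ with the prefix $\alpha s s^{-1} \in S^1$ in place of $\alpha$; this yields $(\alpha s s^{-1}) s \beta = \alpha s \beta \in E(S)$. For $\alpha s \beta \in E(S) \Rightarrow \alpha u \beta \in E(S)$, rewrite $\alpha u \beta = \alpha s (s^{-1} t) \beta$; since $s^{-1} t \in E(S)$ (by the compatibility step) and $sf \leq s$ for every $f \in E(S)$, we have $\alpha u \beta \leq \alpha s \beta$, so $\alpha u \beta$ is an element below an idempotent and therefore idempotent.

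The main obstacle is recognising the correct ``witness'' $u = s s^{-1} t$ with which to compare $s$: once this choice is made, one direction of the syntactic equivalence follows directly from the hypothesis $s \preceq t$ by repackaging the prefix, and the other from the compatibility consequence $s^{-1} t \in E(S)$ combined with the standard monotonicity of multiplication by idempotents.
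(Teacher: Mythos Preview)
Your proposal is correct and follows essentially the same approach as the paper's proof: both directions use the same ideas, the key witness $u = ss^{-1}t$ is identical, and the verification that $(s,u)$ lies in the syntactic congruence proceeds by the same two rewritings ($\alpha u\beta = (\alpha ss^{-1})t\beta$ for one implication, and $u = s(s^{-1}t) \leq s$ for the other). Your write-up is in fact slightly more explicit than the paper's in justifying why elements below idempotents are idempotent.
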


    \begin{proof}
      Let \(S\) be an \(E\)-disjunctive inverse semigroup, and let \(s, \ t \in
      S\). Suppose \(s \leq t\). Then \(s = te\) for some \(e \in E(S)\).
      Let \(\alpha, \ \beta \in S^1\) be such that
     \(\alpha t \beta \in E(S)\). Then \(\alpha s \beta = \alpha te \beta \in
     E(S)\), and \(s \preceq t\).

     Suppose that \(s \preceq t\). Since \(t^{-1} t \in E(S)\), \(t^{-1} s \in
     E(S)\), and so \(s^{-1} t \in E(S)\). If \(\alpha, \ \beta \in E(S)\) are
     such that \(\alpha s \beta \in E(S)\), then \(\alpha ss^{-1}t \beta \in
     E(S)\), and so \(s s^{-1}t \preceq s\). If \(\alpha, \ \beta \in S\) are
     such that \(\alpha ss^{-1} t \beta \in E(S)\), then using the fact that
     \(s \preceq t\), it follows that \(\alpha s \beta = (\alpha ss^{-1}) s
     \beta \in E(S)\), and so \(s \preceq ss^{-1} t\). It follows that \(s\)
     is related to \(ss^{-1} t\) by the syntactic congruence. Since \(S\) is
     \(E\)-disjunctive, \(s = ss^{-1} t\), and so \(s \leq t\).
    \end{proof}

The next lemma provides a characterisation of the identity element of
an $E$-disjunctive inverse semigroup in terms of the natural partial order $\leq$.

\begin{lem}
  \label{EDis_1_lem}
  Let \(S\) be an \(E\)-disjunctive inverse semigroup, and let \(e \in S\) be such that
  \(\alpha e \beta \in E(S)\) if and only if \(\alpha \beta \in E(S^1)\) for all \(\alpha, \beta \in S^1\). Then \(e\) is an identity.
\end{lem}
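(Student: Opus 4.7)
The plan is to use the syntactic congruence to show that $e$ acts as a two-sided identity. First I would observe that $e$ is idempotent: plugging $\alpha=\beta=1$ into the hypothesis gives $e \in E(S)$ iff $1 \in E(S^1)$, and the latter is trivially true.

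Next, the main step is to show that $es$ is related to $s$ by the syntactic congruence for every $s \in S$. For arbitrary $s \in S$ and $\alpha, \beta \in S^1$, I would apply the hypothesis with the pair $(\alpha, s\beta) \in S^1 \times S^1$ to obtain
\[
    \alpha(es)\beta = \alpha e (s\beta) \in E(S) \iff \alpha (s\beta) \in E(S^1).
\]
Since $s \in S$ and $S$ is an ideal of $S^1$, the element $\alpha s \beta$ lies in $S$, so $\alpha s \beta \in E(S^1)$ iff $\alpha s \beta \in E(S)$. Combining these gives
\[
    \alpha (es) \beta \in E(S) \iff \alpha s \beta \in E(S),
\]
so $(es, s)$ belongs to the syntactic congruence on $S$. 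By \cref{lem:syn-eq}, $E$-disjunctivity of $S$ forces the syntactic congruence to be equality, hence $es = s$. The argument that $se = s$ is completely symmetric, using the pair $(\alpha s, \beta)$ instead. Combined, these give that $e$ is an identity for $S$.

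There is not really a hard step here — the argument is essentially bookkeeping. The only mild subtlety is distinguishing $E(S)$ from $E(S^1)$ when $S$ is not already a monoid, but this is resolved by the observation that products $\alpha s \beta$ with $s \in S$ never escape $S$, so the distinction disappears for the relevant terms.
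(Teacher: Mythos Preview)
Your argument is correct and is actually more direct than the paper's. The paper first invokes \cref{partial_order_prop} to conclude that $e$ dominates every idempotent in the natural partial order, so $xe = x = ex$ for all $x\in E(S)$; it then handles non-idempotent $x$ by writing $xe = xx^{-1}xe = x(x^{-1}xe) = x(x^{-1}x) = x$, using that $e$ already absorbs the idempotent $x^{-1}x$. Your route bypasses \cref{partial_order_prop} entirely: you plug $(\alpha, s\beta)$ into the hypothesis and read off that $(es, s)$ satisfies the defining condition of the syntactic congruence, which is trivial by \cref{lem:syn-eq}. This is shorter and uses only the raw definition of $E$-disjunctivity; the paper's approach has the advantage of illustrating the order-theoretic picture (and reusing machinery that is needed anyway for the zero case and \cref{cor:adj-1}).
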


\begin{proof}
  Using \cref{partial_order_prop}, \(e \geq x\) for all \(x \in E(S)\). It
  follows that \(xe = x = ex\) for all \(x \in E(S)\). If \(x \in S \setminus
  E(S)\), then \(xe = xx^{-1} x e = x (x^{-1}x e) = x (x^{-1}x) = x\). Similarly
  \(ex = exx^{-1}x = xx^{-1}x = x\). Thus \(e\) is an identity for \(S\).
\end{proof}

A corollary of the previous lemma characterises when an $E$-disjunctive inverse
semigroup $S$ with identity adjoined $S ^ 1$ is also $E$-disjunctive.

\begin{cor}
    \label{cor:adj-1}
    Let \(S\) be an \(E\)-disjunctive inverse semigroup. Then \(S^1\) is
    \(E\)-disjunctive if and only if \(S\) does not contain an identity.
\end{cor}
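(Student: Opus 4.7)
The plan is to prove the two directions separately; the non-trivial implication reduces to a direct application of \cref{EDis_1_lem}.

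For the forward direction I would argue by contraposition: assuming \(S\) already contains an identity \(1_S\), I would construct a non-trivial idempotent-pure congruence on \(S^1\) via the retraction \(\phi\colon S^1\to S\) with \(\phi(1)=1_S\) and \(\phi(s)=s\) for \(s\in S\). A routine case check shows \(\phi\) is a semigroup homomorphism---each case reduces to \(1_S\) acting as an identity on \(S\)---so the kernel of \(\phi\) is a congruence identifying exactly the pair \(\{1,1_S\}\), both of which lie in \(E(S^1)\), yielding a non-trivial idempotent-pure congruence as required.

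For the converse, I would take an idempotent-pure congruence \(\rho\) on \(S^1\) and show it is trivial. Since \(S\) is an ideal of \(S^1\), the restriction \(\rho|_S\) is a congruence on \(S\); and because \(E(S)=E(S^1)\setminus\{1\}\), it remains idempotent-pure and so is trivial by \(E\)-disjunctivity of \(S\) (using \cref{ideal_lem} implicitly, or just the direct restriction argument). Consequently the only way \(\rho\) could be non-trivial is if \((1,s)\in\rho\) for some \(s\in S\), in which case idempotent-purity forces \(s\in E(S)\).

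The main step is then to apply \cref{EDis_1_lem} to such an \(s\). For arbitrary \(\alpha,\beta\in S^1\), the substitution property gives \(\alpha\beta = \alpha\cdot 1\cdot\beta\,\rho\,\alpha s\beta\), and idempotent-purity yields \(\alpha\beta\in E(S^1)\Leftrightarrow\alpha s\beta\in E(S^1)\). Since \(s\in S\) and \(S\) is an ideal of \(S^1\), we have \(\alpha s\beta\in S\), so the right-hand side is equivalent to \(\alpha s\beta\in E(S)\). This matches the hypothesis of \cref{EDis_1_lem}, forcing \(s\) to be an identity of \(S\) and contradicting our assumption. The only delicate point---which I do not expect to be a genuine obstacle---is this translation between \(E(S^1)\) and \(E(S)\), which amounts to observing that \(\alpha s\beta\) can never equal the adjoined identity \(1\).
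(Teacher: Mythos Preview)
Your proof is correct and follows essentially the same approach as the paper: both directions hinge on \cref{EDis_1_lem}, and both argue that any non-trivial idempotent-pure congruence on \(S^1\) must relate the adjoined \(1\) to an element of \(S\), which then forces that element to be an identity. The only cosmetic difference is that the paper works throughout with the syntactic congruence (and for the forward direction simply notes that \(1\) and \(1_S\) are syntactically equivalent), whereas you use the retraction \(\phi\) and an arbitrary idempotent-pure congruence---but these are the same argument in slightly different packaging.
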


\begin{proof}
    \((\Rightarrow)\) We prove the contrapositive; that is that if \(S\)
    contains an identity, then \(S^1\) is not \(E\)-disjunctive. Suppose \(S\)
    contains an identity \(e\). Then for all \(x, y \in S^1\), \(xey \in
    E(S^1)\) if and only if \(xy \in E(S^1)\). Similarly, if \(1\) is the
    adjoined identity, \(x1y \in E(S^1)\) if and only if \(xy \in E(S^1)\) for
    all \(x, y \in S^1\). Thus \(1\) and \(e\) are related by the syntactic
    congruence in \(S^1\), and so \(S^1\) is not \(E\)-disjunctive.

    \((\Leftarrow)\) Suppose that \(S\) does not contain an identity. Since \(S\) is
    \(E\)-disjunctive, \cref{EDis_1_lem} tells us there is no element \(e \in
    S\) such that for all \(x, y \in S^1\), we have \(xey \in E(S^1)\) if and
    only if \(xy \in E(S)\). However, \(x1y \in E(S^1)\) if and only if \(xy \in
    E(S^1)\) for all \(x, y \in S^1\). It follows that \(1\) is not related to
    any other element of \(S^1\) by the syntactic congruence. As no two elements
    inside \(S\) are related by the syntactic congruence of \(S^1\), we have
    that the syntactic congruence of \(S^1\) is equality, and so \(S^1\) is
    \(E\)-disjunctive.
\end{proof}

The next lemma is an analogue of \cref{EDis_1_lem} where ``identity'' is replaced by ``zero''.

    \begin{lem}
      \label{EDis_0_lem}
      Let \(S\) be an \(E\)-disjunctive inverse semigroup, and let \(e \in S\) be such that
      \(\alpha e \beta \in E(S)\) for all \(\alpha, \ \beta \in S^1\). Then
      \(e\) is a zero.
    \end{lem}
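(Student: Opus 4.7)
The plan is to mimic the flavour of the proof of \cref{EDis_1_lem} but work directly with the syntactic congruence rather than the partial order $\preceq$. By \cref{lem:syn-eq}, since $S$ is $E$-disjunctive, the syntactic congruence $\rho$ on $S$ coincides with $\Delta_S$. So to prove $e$ is a zero it suffices to show that for every $s \in S$ the pairs $(se, e)$ and $(es, e)$ both lie in $\rho$, since then $se = e = es$.

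To show $(se, e) \in \rho$ I would check that for all $\alpha, \beta \in S^1$, $\alpha (se) \beta \in E(S)$ if and only if $\alpha e \beta \in E(S)$. The right-hand membership always holds by hypothesis. For the left-hand side, I would regroup $\alpha (se) \beta = (\alpha s) e \beta$ and observe that $\alpha s \in S^1$ (whether $\alpha = 1$ or $\alpha \in S$), so the hypothesis applied with $\alpha s$ in place of $\alpha$ gives $(\alpha s) e \beta \in E(S)$. Thus both sides of the biconditional hold unconditionally, so $(se, e) \in \rho$, and $E$-disjunctivity forces $se = e$. An entirely symmetric calculation, grouping $\alpha (es) \beta = \alpha e (s \beta)$ with $s\beta \in S^1$, gives $(es, e) \in \rho$ and hence $es = e$.

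There is no real obstacle: the lemma is essentially a direct unpacking of the definition of the syntactic congruence against \cref{lem:syn-eq}, and the only bookkeeping required is the trivial observation that $\alpha s, s\beta \in S^1$ whenever $\alpha, \beta \in S^1$ and $s \in S$.
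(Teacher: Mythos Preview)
Your argument is correct. You bypass \cref{partial_order_prop} entirely and appeal directly to \cref{lem:syn-eq}: for each $s\in S$ you observe that both $\alpha(se)\beta=(\alpha s)e\beta$ and $\alpha e\beta$ lie in $E(S)$ for all $\alpha,\beta\in S^1$, so $(se,e)$ belongs to the (trivial) syntactic congruence and $se=e$; symmetrically $es=e$.

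The paper instead routes through \cref{partial_order_prop}: from the hypothesis one gets $e\preceq x$ for every $x$, hence $e\leq x$ in the natural partial order, and then a short case split (idempotent versus non-idempotent $x$) finishes. Your approach is slightly more direct and avoids that case analysis; the paper's approach has the advantage of exhibiting the lemma as an immediate consequence of the order characterisation already proved, keeping the narrative parallel with \cref{EDis_1_lem}. Both are short and neither uses anything the other doesn't implicitly rely on (since \cref{partial_order_prop} itself is proved via the syntactic congruence).
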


\begin{proof}
  Using \cref{partial_order_prop}, \(e \leq x\) for all \(x \in
  S\). If \(x \in E(S)\), then \(xe \leq e\) and \(ex \leq e\). As \(e \leq xe\) and \(e \leq ex\), it follows that \(xe = e = ex\). If \(x \in S
  \setminus E(S)\), then \(xe, \ ex \in E(S)\). Thus \(xe = xe^2 = e\) and \(ex = e^2 x = e\), because we have shown that \(e\) acts as a zero when multiplied by elements of \(E(S)\). In particular, \(e\) is a
  (the) zero in \(S\).
\end{proof}

To obtain the analogue of \cref{cor:adj-1} we prove a more general result. 

\begin{proposition}
  \label{prop:0-direct}
  A \(0\)-direct union of \(E\)-disjunctive inverse semigroups \((S_i)_{i\in I}\) is \(E\)-disjunctive if and only if none of the semigroups \(S_i\) has a zero.
\end{proposition}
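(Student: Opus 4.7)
I would prove the forward direction by contrapositive and the converse by combining \cref{lem-idempotents} with \cref{EDis_0_lem}. Write $S$ for the $0$-direct union and $0$ for its adjoined zero.

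For the forward direction, suppose $S_i$ has a zero $0_i$ for some $i$. Then I claim $\rho := \Delta_S \cup \{(0_i, 0), (0, 0_i)\}$ is a non-trivial idempotent-pure congruence on $S$. Idempotent-purity is immediate since $0_i, 0 \in E(S)$, and the congruence property follows from $a \cdot 0 = 0 \cdot a = 0$ for every $a \in S$ together with $a \cdot 0_i \in \{0_i, 0\}$ (equalling $0_i$ if $a \in S_i$ and $0$ otherwise), and symmetrically on the right. Hence $S$ is not $E$-disjunctive.

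For the converse, assume each $S_i$ is $E$-disjunctive with no zero, and let $\rho$ be an idempotent-pure congruence on $S$. By \cref{lem-idempotents} it suffices to show $\rho$ is trivial on $E(S) = \{0\} \cup \bigcup_{i \in I} E(S_i)$. For each $i$, the restriction $\rho \cap (S_i \times S_i)$ is an idempotent-pure congruence on the inverse subsemigroup $S_i$, hence trivial by $E$-disjunctivity of $S_i$. Thus if $(e, f) \in \rho$ with $e \neq f$ in $E(S)$, then $e$ and $f$ cannot both lie in the same $S_i$. Multiplying $(e, f)$ on the right by $e$ and using that products across distinct components equal $0$, I can reduce to the case $(g, 0) \in \rho$ for some $g \in E(S_i) \setminus \{0\}$.

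To finish, I would feed $g$ into \cref{EDis_0_lem} applied to $S_i$. For any $\alpha, \beta \in S_i^1$ the congruence property gives $(\alpha g \beta, 0) \in \rho$, so $\alpha g \beta \in E(S)$ by idempotent-purity. Since $\alpha g \beta \in S_i$ (as $S_i$ is a subsemigroup and $\alpha, \beta \in S_i^1$) and $0 \notin S_i$, this forces $\alpha g \beta \in E(S_i)$. By \cref{EDis_0_lem}, $g$ must be a zero of $S_i$, contradicting the hypothesis. The most delicate point throughout is keeping the adjoined zero $0$ separate from elements of the $S_i$, which is precisely where the hypothesis that no $S_i$ carries its own zero is used.
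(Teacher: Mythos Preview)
Your proof is correct. The forward direction is identical to the paper's. For the converse, the paper argues slightly more directly: rather than first reducing to idempotents via \cref{lem-idempotents}, it takes an arbitrary pair $(a,b)\in\sigma$ with $a\neq b$, observes (as you do) that $a$ and $b$ cannot lie in the same $S_i$, and then multiplies by $a^{-1}a$ to obtain $(a,0)\in\sigma$; since $a$ is now the \emph{unique} element of $S_i$ in its $\sigma$-class, the singleton $\{a\}$ is forced to be an ideal of $S_i$, hence $a$ is a zero. Your route through \cref{EDis_0_lem} reaches the same contradiction and is a perfectly natural reuse of that lemma; the paper's version is marginally more self-contained and does not need the preliminary restriction to idempotents, but there is no substantive difference in strength or difficulty between the two arguments.
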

\begin{proof}
    \((\Rightarrow)\) We prove the contrapositive. Suppose that \(S_i\) has a
    zero element \(0_i\) for some \(i\in I\). Then the congruence on the zero
    direct union generated by the pair \((0_i, 0)\) identifies only these two
    elements and is thus non-trivial and idempotent-pure.

    \((\Leftarrow)\)
    Let \(\sigma\) be an idempotent-pure congruence on the zero direct union.
    Suppose for a contradiction that there is \((a, b)\in \sigma\) with \(a\neq
    b\). Since \(\sigma\) restricts to idempotent-pure congruences on each
    \(S_i\), it follows that \(\sigma\) is trivial on each \(S_i\). In
    particular, \(a\) and \(b\) do not belong to the same semigroup \(S_i\) for
    any \(i\in I\).

    Suppose without loss of generality that \((a, b)\in \sigma\), \(i\in I\),
    \(a\in S_i\) and \(b\not\in S_i\). Then \((a, 0)=(aa^{-1}a, ba^{-1}a) \in
    \sigma\). So \(a\) is the unique element of \(S_i\) related to \(0\) by
    \(\sigma\). Since \(\sigma\) is a congruence, it follows that the set
    \(\{a\}\) is an ideal of the semigroup \(S_i\). This is a contradiction as
    \(S_i\) does not contain a zero. 
\end{proof}

\begin{cor}\label{cor:add_zero}
  Let \(S\) be an \(E\)-disjunctive inverse semigroup. Then \(S^0\) is
  \(E\)-disjunctive if and only if \(S\) does not contain a zero.
\end{cor}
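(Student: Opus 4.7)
My plan is to obtain \cref{cor:add_zero} as the one-factor specialisation of \cref{prop:0-direct}: when $S$ has no zero, the semigroup $S^0$ is precisely the $0$-direct union of the singleton family $\{S\}$, so the $(\Leftarrow)$ direction is essentially a restatement of the corresponding direction of \cref{prop:0-direct}. For the other direction, I will argue contrapositively by exhibiting a non-trivial idempotent-pure congruence on $S^0$ whenever $S$ already contains a zero.

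For the ``only if'' direction, suppose $S$ has a zero $0_S$, and let $0$ denote the adjoined zero of $S^0$. I will define
\[
\tau = \Delta_{S^0} \cup \{(0_S, 0), (0, 0_S)\}
\]
and check directly that $\tau$ is a congruence. The only non-trivial products to consider are $s\cdot 0_S$ and $s\cdot 0$ (and their symmetric versions) for $s \in S^0$: when $s \in S$ both multiplications give $0_S$ and $0$ respectively, while $0\cdot 0_S = 0_S \cdot 0 = 0$. In each case the pair of products is either equal or is $(0_S,0)$, so $\tau$ is preserved by multiplication. Since $0_S$ and $0$ are both idempotents, $\tau$ is idempotent-pure, and it is manifestly non-trivial, so $S^0$ is not $E$-disjunctive.

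For the ``if'' direction, rather than relying on whether the terminology ``$0$-direct union of a one-element family'' is legitimate, I will mimic the argument from \cref{prop:0-direct}. Let $\sigma$ be an idempotent-pure congruence on $S^0$ and suppose $(a,b) \in \sigma$ with $a \neq b$. The restriction of $\sigma$ to $S$ is an idempotent-pure congruence on $S$, hence trivial by the $E$-disjunctivity of $S$; so $\{a,b\} \not\subseteq S$, and we may assume $b = 0$ and $a \in S$. As in \cref{prop:0-direct}, replacing $(a,b)$ by $(aa^{-1}a, ba^{-1}a) = (a,0)$ is harmless, and for any $s \in S$ we have $(sa, 0), (as, 0) \in \sigma$ with $sa, as \in S$; triviality of $\sigma$ restricted to $S$ then forces $sa = a = as$ for all $s \in S$, contradicting that $S$ has no zero.

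Since this is effectively a corollary of the preceding proposition, there is no genuinely hard step; the one point that requires care is the explicit verification in the ``only if'' direction that $\tau$ is closed under multiplication (as opposed to being a mere idempotent-pure equivalence relation), which is why I lay out the cases for $s\cdot 0_S$, $s\cdot 0$, $0\cdot 0_S$, and $0_S\cdot 0$ explicitly.
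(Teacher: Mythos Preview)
Your proof is correct and follows essentially the same route as the paper: the corollary is stated without proof in the paper because it is the one-factor case of \cref{prop:0-direct}, and you have simply unwound that proposition's argument in this special case (including exhibiting the same congruence $\Delta_{S^0}\cup\{(0_S,0),(0,0_S)\}$ for the contrapositive direction). The only small elision is that ``triviality of $\sigma$ restricted to $S$ forces $sa=a=as$'' implicitly uses the transitivity step $(sa,0),(a,0)\in\sigma\Rightarrow (sa,a)\in\sigma|_S$; making this explicit would tighten the write-up.
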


\section{Wreath products and quotients}
\label{sec:wreath-prod}

In this section we consider when wreath products of inverse semigroups are
$E$-disjunctive and use this to show that every inverse semigroup is a
homomorphic image of an \(E\)-disjunctive inverse semigroup. We think of wreath
products in terms of matrices. Recall that an element of a wreath product of
groups \(G\wr_X H\) where \(H\leq S_X\) consists of a pair \(((g_x)_{x\in
X},h)\in (G^X, H)\). We think of the elements of \(G\wr_X H\) as an \(X\times
X\) matrix \(M\) such that the entry indexed by $(x, y) \in X \times X$ in $M$
is \(g_x\) whenever \((x)h = y\) and $0$ otherwise. We will also think of such
matrices as functions $M:X \times X\to G\cup \{0\}$, where $M(x, y)$ is just the
$(x, y)$-entry of the matrix. It is routine to verify that the group \(G\wr_X
H\) is isomorphic to the group consisting of the corresponding matrices, as just
defined, where $0 + g = g + 0 = g$ for all $g\in G$.

We extend this definition of wreath products to inverse semigroups $S$ and
subsemigroups $T$ of the symmetric inverse monoid $I_X$ as follows. To do this
nicely, we introduce a semiring which contains \(S\) but only use the
\(S\cup\{0\}\) part of it.

Recall that \((R, +, \cdot)\) is a \textit{semiring} if the following hold:
\begin{enumerate}
    \item \((R, +)\) is a commutative monoid whose identity we call \(0\);
    \item \((R, \cdot)\) is a semigroup;
    \item the operation $\cdot$ distributes over \(+\); and 
    \item \(r\cdot 0=0\cdot r= 0\) for all \(r\in R\).
\end{enumerate}

If \(S\) is a semigroup, then we define \(\mathbb {N}[S]\) to be the quotient of
the free semiring over the set \(S\) by the relations \(s\cdot t=st\) \(s, t\in
S\). That is, $\mathbb{N}[S]$ consists of finite formal sums of the form
\[
\sum_{s\in S} n_s s
\]
where $n_s\in \N$ for all $s\in S$ and only finitely many $n_s$ are non-zero
with the natural multiplication. 

If \(R\) is a semiring and \(X\) is a set, then we define the \textit{(row
finite) matrix semiring} over \(R\) by
\begin{align*}
    M_{X}(R) & = \set{f\colon X \times X \to R }{\text{all but finitely many entries of each row of }f\text{ are } 0}\\
    & = \set{f\colon X \times X \to R}{\text{for all }x \in X,\ |(R\setminus\{0\})f^{-1} \cap (\{x\}\times X)|<\infty}
\end{align*}
with operation defined by
    \[(x, y)fg= \sum_{i\in X} (x, i)f\cdot (i, y)g\]
for \(f, g\in M_{X}(R)\).

If \(S\) is a semigroup, \(P_X\) is the partial transformation monoid on the set
\(X\), and \(T\leq P_X\), then we define \(S\wr T\) to be the following submonoid
of \(M_X(\N[S])\):
\[S\wr T=\set{f\in M_X(\N[S])}{\im(f)\subseteq S \cup \{0\}\text{ and }(S)f^{-1} \in T}.\]
The condition $(S)f^{-1}\in T$ makes sense since $(S)f^{-1}\subseteq X\times X$,
and so this condition simply asserts that the relation $(S)f^{-1}$ is a partial
transformation that belongs to $T$.

We define \(\phi \colon S\wr T \to T\) by \((f)\phi= (S)f^{-1}\). 
It is routine to verify that $\phi$ is a surjective homomorphism. As such 
the multiplication in $S\wr T$ can alternatively be defined as follows 
\[(x, y)fg= 
\begin{cases}
    (x, z)f\cdot (z, y)g &  \text{ if } (x, z) \in (f)\phi,\ (z, y) \in (g)\phi\\
    0 & \text{ if }(x, y) \notin (fg)\phi,
\end{cases}
\]
(since the sum only ever has one non-zero summand).

\begin{lem}
  If \(S\) and \(T\leq I_X\leq P_X\) are inverse semigroups, then \(S \wr T\) is an inverse semigroup.
\end{lem}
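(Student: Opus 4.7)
The plan is to apply the standard criterion that a semigroup is inverse if and only if it is regular and its idempotents commute (e.g.\ \cite[Theorem 5.1.1]{Howie}). Closure of $S \wr T$ under multiplication is already packaged into the fact, noted immediately before the lemma, that $\phi \colon S \wr T \to T$ is a (surjective) homomorphism.

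For regularity, given $f \in S \wr T$ with $(f)\phi = \sigma$, I would define a candidate inverse $f^{*} \colon X \times X \to \N[S]$ entrywise by setting $(x,y)f^{*} = ((y,x)f)^{-1}$ (the inverse taken in $S$) whenever $(y,x) \in \sigma$, and $(x,y)f^{*} = 0$ otherwise. This is well-defined, since $(y,x) \in \sigma$ forces $(y,x)f \in S$; moreover $\im(f^{*}) \subseteq S \cup \{0\}$ and $(f^{*})\phi = \sigma^{-1} \in T$ (as $T$ is itself inverse), so $f^{*} \in S \wr T$. Using the single-summand product formula exhibited just before the lemma, an entrywise computation gives $(x,y)(ff^{*}f) = (x,y)f \cdot ((x,y)f)^{-1} \cdot (x,y)f = (x,y)f$ whenever $(x,y) \in \sigma$, while both sides vanish off $\sigma$; hence $ff^{*}f = f$.

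To prove that the idempotents of $S \wr T$ commute I would first characterise them. If $e^{2} = e$ in $S \wr T$, then $(e)\phi$ is an idempotent of $T \leq I_{X}$, hence the partial identity on some $Y \subseteq X$. This forces $(x,y)e = 0$ whenever $x \neq y$ or $x \notin Y$; and for $x \in Y$, evaluating $e^{2} = e$ at $(x,x)$ forces $(x,x)e$ to be an idempotent of $S$. For two such idempotents $e_{1}, e_{2}$ supported on $Y_{1}, Y_{2}$ respectively, both $e_{1}e_{2}$ and $e_{2}e_{1}$ are diagonal with support $Y_{1} \cap Y_{2}$, and the entry at $(x,x)$ is $(x,x)e_{1} \cdot (x,x)e_{2}$ respectively $(x,x)e_{2} \cdot (x,x)e_{1}$; these agree because idempotents in $S$ commute.

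The conceptual point --- and the only possible pitfall --- is that $T \leq I_{X}$ makes every $(f)\phi$ a partial bijection, so for each row index $x$ there is at most one column index $y$ with $(x,y)f$ non-zero; this is what allows the entrywise inverse in $S$ to lift to a genuine semigroup inverse and makes idempotents of $S \wr T$ ``diagonal''. The remaining work is purely the bookkeeping of tracking zeros through the product formula.
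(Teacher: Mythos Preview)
Your proposal is correct and follows essentially the same route as the paper: both characterise the idempotents of $S \wr T$ as diagonal matrices with idempotent entries (using that $T \leq I_X$ forces $(e)\phi$ to be a partial identity), deduce commutativity of idempotents from commutativity in $S$, and construct the inverse of $f$ as the transpose with each entry replaced by its inverse in $S$. The only cosmetic difference is that you invoke the ``regular with commuting idempotents'' criterion explicitly, whereas the paper simply exhibits the inverse and checks that idempotents commute.
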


\begin{proof}
If \(f \in S \wr T\), then it is routine to verify that $f$ is an idempotent if and only if 
\begin{enumerate}
    \item the preimage of \(S\) under \(f\) is an idempotent of \(T\);
    \item the image of \(f\) contains only idempotents.
\end{enumerate}
Since \(S\) and \(T\) are inverse semigroups, \(f\in S\wr T\) is a diagonal
matrix and so the idempotents of \(S\wr T\) commute. Also if $f\in S\wr T$, then we define
$f ^ {-1}$ to be
\[(a, b)f^{-1}= \begin{cases}
    ((b, a)f) ^{-1} &  \text{ if }(b, a)f \neq 0\\
    0 & \text{ if }(b, a)f = 0
\end{cases}.
\]
In other words, $f^ {-1}$ is obtained from $f$ by transposing $f$ and inverting
its entries. It is straightforward to show that $f^{-1}$ is a semigroup
theoretic inverse of $f$, and so \(S\wr T\) is an inverse semigroup.
\end{proof}

The following proposition is a special case of \cref{thm_final_wreath}, however we include the proof below, as
it is more straightforward, and helps exhibit the ideas behind the proof of \cref{thm_final_wreath}.

\begin{proposition}
    \label{prop:first_wreath}
    Let \(G\) be a non-trivial group and \(T\leq I_X\) be an inverse semigroup. 
    Then \(G \wr T\) is \(E\)-disjunctive.
\end{proposition}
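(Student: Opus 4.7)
The plan is to apply \cref{lem-idempotents}: it suffices to prove that every idempotent-pure congruence on $G \wr T$ restricts to the identity on $E(G \wr T)$. From the characterisation of idempotents in the preceding lemma, the idempotents of $G \wr T$ are precisely the matrices $\hat e$ with value $1_G$ at each diagonal position $(y,y)$ for $y$ in the support of some $e \in E(T)$ (and $0$ elsewhere). So it suffices to show that no idempotent-pure congruence $\rho$ on $G \wr T$ relates $\hat e$ to $\hat{e'}$ for distinct $e, e' \in E(T)$.

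Suppose for contradiction that $\hat e \rho \hat{e'}$ with $e \neq e'$ in $E(T)$; after swapping $e$ and $e'$ if necessary, we may assume $ee' \neq e$ in the semilattice $E(T)$. Writing $E \subseteq X$ for the domain of the partial identity $e$ (and similarly $E'$ for $e'$), this forces $E \not\subseteq E'$, so there is a point $y_0 \in E \setminus E'$. The key step is to build a non-idempotent $\gamma \in G \wr T$ that ``distinguishes'' $\hat e$ from $\hat{e'}$ multiplicatively. Pick $h \in G \setminus \{1_G\}$, which is the sole place the non-triviality of $G$ is used, and define $\gamma$ by $(y, y)\gamma = 1_G$ for $y \in E \cap E'$, $(y, y)\gamma = h$ for $y \in E \setminus E'$, and $(x, y)\gamma = 0$ otherwise. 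Then $(\gamma)\phi = e \in T$, so $\gamma \in G \wr T$, and $\gamma$ is not an idempotent because $(y_0, y_0)\gamma = h \neq 1_G$.

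A short computation gives $\gamma \hat e = \gamma$ (as $\gamma$ already has support inside that of $\hat e$) and $\gamma \hat{e'} = \widehat{ee'}$, the latter because right multiplication by $\hat{e'}$ restricts $\gamma$'s support to $E \cap E'$, on which $\gamma$'s values are all $1_G$. Since $\rho$ is a congruence, $\hat e \rho \hat{e'}$ yields $\gamma \rho \widehat{ee'}$ after left multiplication by $\gamma$, relating the non-idempotent $\gamma$ to the idempotent $\widehat{ee'}$ --- a contradiction to idempotent-purity. Hence no such $\rho$ exists and, by \cref{lem-idempotents}, $G \wr T$ is $E$-disjunctive.

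The main obstacle is the design of $\gamma$: it must be non-idempotent itself yet collapse to an idempotent after the ``annihilating'' right multiplication by $\hat{e'}$. Placing the non-identity value $h$ precisely on positions in $E \setminus E'$ ensures these positions are killed by $\hat{e'}$, while on $E \cap E'$ the value is already $1_G$; this asymmetric behaviour is exactly what yields the contradiction. The non-triviality of $G$ is what makes the non-identity $h$ available, and without it the analogous construction would produce only idempotents.
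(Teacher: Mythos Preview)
Your proof is correct and follows essentially the same approach as the paper's: both invoke \cref{lem-idempotents} to reduce to a pair of distinct idempotents, then construct a non-idempotent element (by inserting a non-identity group element $h$ on appropriate diagonal positions) whose left multiplication sends one idempotent to a non-idempotent and the other to an idempotent, contradicting idempotent-purity. The only cosmetic difference is that the paper first reduces to the comparable case $f<g$ and alters a single diagonal entry, whereas you work directly with $E\not\subseteq E'$ and place $h$ on all of $E\setminus E'$; these are minor variants of the same idea.
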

\begin{proof} Seeking a contradiction suppose that \(\rho\) is a non-trivial
   idempotent-pure congruence on \(G \wr T\). Then by \cref{lem-idempotents}
   there exist \(f\neq g\in E(G \wr T)\) such that \((f, g)\in \rho\). If \(fg =
   f\), then \(f < g\). If \(fg \neq f\), then \(f < fg\) are idempotents such
   that \((f, fg) \in \rho\). Thus we may assume without loss of generality that
   \(f<g\). Note that \(f, g\) are both matrices whose entries are all \(0\)
   except for some idempotents on the diagonal. Since $f < g$, there exists
   \(x\in X\) such that \((x, x)f<(x, x)g\). The entries of $f$ and $g$ belong
   to $G\cup \{0\}$ (whose only idempotents are the identity $1_G$ and $0$).
   Thus
    \((x, x)g = 1_G\) and \((x, x)f = 0\).
   Let \(h\in G\setminus \{1_G\}\) and let \(g'\in S \wr T\) be the matrix with
   the same entries as \(g\) except that \((x, x)g'=h\). Thus \(g'g = g'\) and
   \(g'f = f\). It follows that
   \[(f, g)\in \rho \Rightarrow (g'f, g'g)\in \rho \Rightarrow (f, g')\in \rho .\]
   But \(g'\) is not an idempotent, and \(\rho\) is idempotent-pure. This is a contradiction.
\end{proof}

The next theorem establishes that every inverse semigroup is a quotient of some $E$-disjunctive semigroup.

\begin{theorem}\label{thm_final_wreath}
    Let \(S\) be an \(E\)-disjunctive inverse semigroup without a zero and \(T\leq I_X\) be a inverse semigroup.
    Then \(S \wr T\) is \(E\)-disjunctive and has \(T\) as a quotient.

    Moreover, if \(T\leq S_X\), then the assumption that \(S\) has no zero can be dropped.
\end{theorem}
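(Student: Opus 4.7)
My plan splits the theorem into the quotient claim and the $E$-disjunctivity claim. The quotient claim is immediate from the surjective homomorphism $\phi \colon S \wr T \to T$ described before the statement. For the $E$-disjunctivity, I would follow the template of \cref{prop:first_wreath}: suppose $\rho$ is a non-trivial idempotent-pure congruence on $S \wr T$. By \cref{lem-idempotents} we obtain distinct $f, g \in E(S \wr T)$ with $(f, g) \in \rho$, and the same commuting-idempotent trick as in \cref{prop:first_wreath} reduces the situation to $f < g$. Because $f$ and $g$ are diagonal matrices with entries in $E(S) \cup \{0\}$, we pick $x \in X$ with $(x, x)f < (x, x)g$ and split on whether $(x, x)f = 0$.

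\textbf{Case A:} $(x, x)f = 0$ and $(x, x)g = e \in E(S)$. The main intermediate claim is that for every $e \in E(S)$ there exists $n \in S$ with $ne \notin E(S)$. To prove this I would assume for contradiction that $Se \subseteq E(S)$. Self-inversion of idempotents then forces $se = es^{-1}$ and $es = s^{-1}e$ for all $s \in S$; squaring gives $se = ss^{-1}e$ (and dually $es = s^{-1}s\,e$), and from these one deduces $\alpha e \beta \in E(S)$ for every $\alpha, \beta \in S^1$. By \cref{partial_order_prop} any $e' < e$ has the same property, so $e$ and $e'$ are syntactically equivalent; $E$-disjunctivity then forces $e$ to be the minimum idempotent, whence $se = ss^{-1}e = e$ and $es = e$ for all $s$, making $e$ a zero and contradicting that $S$ has no zero. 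With such $n$ in hand, define $g^\ast$ to be the matrix obtained from $g$ by replacing its $(x, x)$-entry by $n$; then $g^\ast \in S \wr T$ since $(g^\ast)\phi = (g)\phi$, and a direct computation (using $n \cdot 0 = 0$) shows $g^\ast f = f$, while $g^\ast g$ has $(x, x)$-entry $ne \notin E(S)$ and so is not idempotent. This contradicts idempotent-purity of $\rho$.

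\textbf{Case B:} $(x, x)f = e_f$ and $(x, x)g = e_g$ both in $E(S)$ with $e_f < e_g$. Here the $E$-disjunctivity of $S$ itself, combined with $e_f \preceq e_g$ from \cref{partial_order_prop}, furnishes $\alpha, \beta \in S^1$ (not both equal to $1$) with $\alpha e_f \beta \in E(S)$ but $\alpha e_g \beta \notin E(S)$. Define $h_1 \in (S \wr T)^1$ to be $1$ if $\alpha = 1$ and otherwise the matrix obtained from $g$ by replacing its $(x, x)$-entry with $\alpha$; define $h_2$ analogously from $\beta$. Then $h_1 f h_2$ and $h_1 g h_2$ agree with $f$ and $g$ respectively off the $(x, x)$-position and have $(x, x)$-entries $\alpha e_f \beta$ and $\alpha e_g \beta$; hence $h_1 f h_2 \in E(S \wr T)$ and $h_1 g h_2 \notin E(S \wr T)$, contradicting that $\rho$ is idempotent-pure.

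For the moreover statement, when $T \leq S_X$ each element of $T$ is a bijection, so $T$'s only idempotent is $\id_X$; consequently every idempotent of $S \wr T$ has support $X$ and every diagonal entry lies in $E(S)$ (no $0$'s). Case A therefore never arises; Case B alone disposes of the situation and uses only the $E$-disjunctivity of $S$, so the no-zero hypothesis can be dropped. The principal difficulty is Case A --- the existence of the non-idempotent witness $n$ is the essential use of the combined hypotheses that $S$ is $E$-disjunctive and has no zero.
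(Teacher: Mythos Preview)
Your proof is correct, but the route differs from the paper's in both the overall organisation and the handling of the two cases.

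The paper first passes to the full inverse subsemigroup \(D\) of diagonal matrices (via \cref{full-subsemi}) and works entirely there; you stay in \(S\wr T\), though in practice every matrix you manipulate is diagonal, so the reduction is implicit. More substantively, the paper does not reduce to \(f<g\) and seek a contradiction: it sets \(h=fg\leq f\) and shows \((x,x)f=(x,x)h\) for every \(x\), concluding \(f=h=g\).

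In the zero-entry case the two arguments diverge genuinely. You prove an auxiliary fact---if \(Se\subseteq E(S)\) then \(\alpha e\beta\in E(S)\) for all \(\alpha,\beta\in S^1\), whence \(e\) is a zero by \cref{EDis_0_lem}---and then exhibit a single multiplier \(g^\ast\) producing a non-idempotent related to \(f\). The paper instead observes that \(\{f_s:s\in S\cup\{0\}\}\cong S^0\) sits inside \(D\), so the \(\rho\)-class of \(h=f_0\) meets this copy in an ideal \(I\) of \(S\) consisting of idempotents; if \(I\) were non-singleton then \(S/I\) would be a proper idempotent-pure quotient, so \(I=\{z\}\) is a zero. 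Your approach isolates a reusable lemma about \(E\)-disjunctive semigroups without zero; the paper's ideal argument is shorter and avoids the detour through \cref{partial_order_prop} and the syntactic characterisation.

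In the non-zero-entry case you invoke \cref{partial_order_prop} to extract explicit multipliers \(\alpha,\beta\in S^1\) witnessing \(e_f\prec e_g\), and transport them into \(S\wr T\) as \(h_1,h_2\). The paper instead embeds \(S\) as \(S_x=\{h_s:s\in S\}\subseteq D\), notes that \(\rho|_{S_x}\) is an idempotent-pure congruence on a copy of \(S\) and is therefore trivial, and checks directly that \((h_{(x,x)f},h)\in\rho\). This is cleaner: no witnesses need be produced and \cref{partial_order_prop} is not used. Your treatment of the ``moreover'' clause matches the paper's exactly.
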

 \begin{proof}
    If \(T\) is empty, then \(S \wr T\) is empty, and so \(S \wr T\) is
    \(E\)-disjunctive, as required. We therefore assume that \(T\) is non-empty.
 
   Let \(D\) be equal to the set of diagonal matrices of \(S \wr T\). Then \(D\) is a full inverse subsemigroup of \(S \wr T\), i.e. $D$ contains all of the idempotents of $S\wr T$.
   By \cref{full-subsemi}, it suffices to show that \(D\) is \(E\)-disjunctive.

   Let \(\rho\) be an idempotent-pure congruence on \(D\). We show that \(\rho\)
   is trivial. It suffices by \cref{lem-idempotents} to show that \(\rho\) is
   trivial on the idempotents. Suppose that \((f, g)\in \rho\) and \(f,g\) are
   idempotents. Let \(h=fg \leq f\). We show that \(f=h\), then by symmetry we
   will have \(g=h\) and hence
   \(f=g\) as required.

    Let \(x\in X\) be arbitrary. 
    We need only show that \((x,x)f=(x,x)h\) (as they are idempotents they agree on their non-diagonal entries). 
    There are two cases to consider.
    \medskip

   \noindent \textbf{Case 1:} \((x, x)f\geq(x, x)h =0\).
   We define
   \[I=\makeset{s\in S}{the matrix obtained from \(f\) by replacing \((x, x)f\) with \(s\) belongs to \( h/\rho\) }.\]
   For all \(s\in S\cup\{0\}\), let \(f_s\) be the matrix obtained from \(f\) by replacing \((x, x)f\) with \(s\). It follows that 
   \[f_S\coloneqq \makeset{f_s}{\(s\in S\cup \{0\}\)}\]
   is a semigroup isomorphic to \(S\cup \{0\}\).
   The restriction of \(h/\rho\) to $f_S$ is \(I\).
   The natural map from \(S\) to \(f_S\) embeds \(I\) into a congruence class of \(f_S\) containing the zero element of $f_S$.
   Thus \(I\) is an ideal of \(S\) containing \((x, x)f\).
   Since \(\rho\) is idempotent-pure, it follows that \(I\) consists of idempotents.
   Since \(S\) is \(E\)-disjunctive, it follows that \(I\) must therefore be a singleton (otherwise \(S/I\) would be a proper idempotent-pure quotient).
   Hence, since \(I\) is a singleton ideal, the unique element of \(I\) is a zero for \(S\), a contradiction. So in fact Case 1 never occurs.
   \medskip

 Showing that Case 1 does not occur is the only point in the proof requiring that \(S\) has no zero element. When \(T\leq S_X\), this case does not occur because the only idempotent of \(T\) is the identity function on \(X\), so \(h\) has no zeros on the diagonal. Hence why the assumption is no longer needed.
   \medskip
   
   \noindent \textbf{Case 2:} \((x, x)f\geq (x, x)h >0\).
   For all \(s \in S\) let \(h_s\in D\) be the element which agrees with \(h\) on all entries except \((x, x)h_s=s\).
   Then \(S_x\coloneqq \makeset{h_s}{\(s\in S\)}\) is a subsemigroup of \(D\) isomorphic to \(S\).
   Hence, since \(\rho\) is idempotent-pure, the restriction of \(\rho\) to \(S_x\) is trivial.
   In particular, to show that \((x,x)h=(x, x)f\), we need only show that \((h, h_{(x, x)f} = (h_{(x,x)h}, h_{(x,x)f})\in \rho\).

   We denote  $h_{(x,x)f}$ by \(f'\). 
   Since \(h\leq f\), \(f'f=f'\) and \(f'h=h\). It follows that
   \begin{align*}
     (f, g)\in \rho&\Rightarrow  (ff, fg)\in \rho\\
     &\Rightarrow  (f, h)\in \rho\\
      &\Rightarrow  (f'f, f'h)\in \rho\\
       &\Rightarrow  (f', h)\in \rho
   \end{align*}
   as required.
\end{proof}

\begin{cor}
    \label{cor:all-qts}
    Every inverse semigroup is a quotient of an \(E\)-disjunctive inverse semigroup.
\end{cor}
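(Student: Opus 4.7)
The plan is to combine Theorem~\ref{thm_final_wreath} with the Vagner--Preston representation theorem. Given an arbitrary inverse semigroup $T$, I would first invoke Vagner--Preston (cited earlier in the excerpt as \cite[Theorem 5.1.7]{Howie}) to embed $T$ as an inverse subsemigroup of some symmetric inverse monoid $I_X$. After this embedding we may simply rename $T$ to its image and assume $T \leq I_X$.

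Next, I would pick a convenient $E$-disjunctive inverse semigroup $S$ to plug into the wreath product construction. The simplest choice is any non-trivial group, for instance $S = \mathbb{Z}/2\mathbb{Z}$: by \cref{ex:eunitary} every group is $E$-disjunctive, and a group contains no zero element (its only idempotent is the identity). With this choice the first clause of \cref{thm_final_wreath} applies directly: $S \wr T$ is $E$-disjunctive, and the surjective homomorphism $\phi \colon S \wr T \to T$ given by $(f)\phi = (S)f^{-1}$ (which was exhibited when the wreath product was defined) witnesses $T$ as a quotient of an $E$-disjunctive inverse semigroup.

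There is essentially no obstacle here — the entire content lies in \cref{thm_final_wreath} and Vagner--Preston, both of which are already available. The only small thing to notice is the need for $S$ to have no zero; this is why a group (rather than, say, an arbitrary $E$-disjunctive inverse semigroup) is the cleanest choice. Alternatively, one could invoke the ``moreover'' clause of \cref{thm_final_wreath} whenever $T$ happens to embed into $S_X$, but that is not needed in full generality.
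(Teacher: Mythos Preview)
Your proposal is correct and is essentially the approach implicit in the paper: the corollary is stated immediately after \cref{thm_final_wreath} with no separate proof, precisely because one embeds $T$ into some $I_X$ via Vagner--Preston and then applies the theorem with any $E$-disjunctive $S$ lacking a zero (a non-trivial group being the natural choice, as you note; indeed \cref{prop:first_wreath} already covers this case directly).
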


\part{Some classes of $E$-disjunctive inverse semigroups}

In this part of the paper we provide a number of examples of $E$-disjunctive inverse semigroups \cref{section-compendium}, and we characterise $E$-disjunctive inverse semigroups belonging to the classes of: graph inverse semigroups \cref{section-gis} (in terms of the underlying graphs); and monogenic inverse semigroups \cref{section-monogenic}. As mentioned above, the Clifford $E$-disjunctive semigroup were characterised in~\cite[Theorem 6]{Clifford_IP}. 

\section{A compendium of examples}\label{section-compendium}
In this section we give various examples of \(E\)-disjunctive inverse semigroups. 
These serve as counterexamples to various natural questions about \(E\)-disjunctive inverse semigroups.

Recall that if $X$ is any set, then the \textit{symmetric inverse monoid} $I_X$ on $X$ (often written \(I_n\), if \(|X| = n\) is finite) consists of the bijections between subsets of $X$ and the operation $\circ$ is the usual composition of binary relations. That is, if $f, g\in I_X$, then 
\[
f\circ g = \{(x, z)\in X \times X \mid \text{there exists }y\in X \text{ such that }
(x, y)\in f \text{ and } (y, z)\in g\}.
\]
We may sometimes, arbitrarily, write $fg$, or $f\cdot g$, instead of $f\circ g$.

By the Vagner-Preston Theorem~\cite[Theorem 5.1.7]{Howie}, every inverse semigroup is isomorphic to an inverse subsemigroup of some symmetric inverse monoid.

\begin{ex}
  \label{ex:symmetric_inverse_monoid}
  The symmetric inverse monoid \(I_X\) on a set \(X\) is
  \(E\)-disjunctive if and only if \(|X| \neq 1\).
\end{ex}
\begin{proof}
If $|X|=0$, then $I_X$ is the trivial semigroup, and hence is $E$-disjunctive. 

If $|X| =1$, then $I_X$ is a semilattice of size $2$, and as such is not $E$-disjunctive since it is non-trivial and \(E\)-unitary, and hence the minimum group congruence is
a non-trivial idempotent-pure congruence.

  Suppose that $|X|\geq 2$. Let \(s,  t \in I_X\) and suppose that  $(s, t)$ belongs to the syntactic congruence on $I_X$. Then \(\alpha s \beta
  \in E(I_X)\) if and only if \(\alpha t \beta \in E(I_X)\) for all $\alpha, \beta\in I_X$. If $s\in I_X$, then we consider $s$ as the subset of $X \times X$ consisting of the pairs $(x, (x)s)$.
  Let \(x, y \in X\), and  let \(z \in X \setminus \{x\}\).
  Then \(\{(x,x)\} \circ s \circ \{(y, z)\} \notin E(I_X)\) if and only
  if \((x, y) \in s\); and similarly for $t$. Since \(\{(x, x)\} \circ s \circ \{(y, z)\} \in E(I_X)\)
  if and only if \(\{(x, x)\} \circ t \circ \{(y, z)\} \in E(I_X)\), and so
 \((x, y) \in s\) if and only if $(x, y) \in t$. Thus \(s = t\), and so the syntactic
  congruence on $I_X$ is equality.
\end{proof}

A congruence \(\rho\) is \textit{idempotent-separating}
if \(\rho\) never relates two distinct idempotents. Thus
idempotent-separating is a dual property to idempotent-pure
and inverse semigroups with no non-trivial idempotent-separating congruences, called \textit{fundamental} inverse
semigroups are a dual class of inverse semigroups to
\(E\)-disjunctive inverse semigroups. It is not difficult
to show that the symmetric inverse monoid is fundamental,
thus providing a non-congruence free example of a semigroup
in both of these classes.

Recall, from \cite{DualSym}, for example, that the \textit{dual symmetric inverse monoid} \(I_X^*\) 
is defined as follows.
The underlying set of \(I_X^*\) is the set of partitions of \(X \times
\{0, 1\}\) such that each part intersects both \(X \times
\{0\}\) and \(X \times
\{1\}\). In other words, elements of $I_X^*$ correspond to bijections between the parts of a partition of $X\times \{0\}$ and a partition of $X\times \{1\}$.
We will use partitions and the corresponding equivalence relations
interchangeably. 
Given $s, t \in I_X^*$, we define \(\operatorname{Diag}(s,t)\) to be the least equivalence relation on
\(X \times \{0, 1, 2\}\) containing
\[\{((x, a), (y, b))\in (X\times \{0,1,2\})^2:((x, a), (y, b))\in s \text{ or }
  ((x, a-1), (y, b-1))\in t\}.\]
The product of \(s\) and \(t\) is defined to be
\[\{((x,a),(y,b))\in (X\times \{0, 1\})^2: ((x,2a),(y,2b))\in
  \operatorname{Diag}(s,t)\}\]
and is denoted $st$. 
Note that $e\in I_X^*$ is an idempotent whenever $((x, 0), (y, 0))\in e$ if and only if $((x, 1), (y, 1))\in e$ for all $x, y \in X$ (i.e. the partitions of $X\times \{0\}$ and $X\times \{1\}$ are the ``same'' and the corresponding function is the identity).

\begin{ex}
  \label{ex:dual_symmetric_inverse_monoid}
  The dual symmetric inverse monoid $I_X^*$, where $X$ is any set, is $E$-disjunctive. 
\begin{proof}
    If $I_X^*$ has at most one idempotent, then  \(I_X^*\) is either a group or the empty semigroup. In either case, $I_X^*$ is \(E\)-disjunctive. By \cref{lem-idempotents}, it suffices to show that every idempotent-pure congruence is trivial on \(E(S)\).
    Let \(\rho\) be an idempotent-pure congruence on \(I_X ^ *\). Suppose
    for contradiction that there exist distinct idempotents \(e, f \in E(I_X ^*)\) such that \((e, f)\in \rho\). 
    Since \(e \neq f\), at most one of \(e\) and \(f\) equals \(ef\); assume without loss of generality that \(e\neq ef\).
    Then there is a part of \(ef\) which is a union of at least two parts of \(e\). Let \(s\in I_X^*\) be an element which swaps two of these parts of \(e\) and fixes the others. If \((e, f) \in \rho\), then $(e, ef) = (e ^ 2, ef) \in \rho$ and so $(se, sef) \in \rho$.
    But $se = s$, as \(e\) acts as the identity function on the image of \(s\), which is not an idempotent. On the other hand, \(sef = ef \in E(I_X^*)\), and so $(s, ef)\in \rho$. Since $ef$ is an idempotent and $s$ is not, this contradicts
    \(\rho\) being idempotent-pure, and so \(I_X^*\) is \(E\)-disjunctive.
\end{proof}
\end{ex}

The next example shows that ``congruence'' cannot be replaced by ``right congruence'' in the definition of $E$-disjunctive inverse semigroups. In fact, the next example is the unique inverse semigroup of smallest size up to isomorphism, showing this. 

\begin{ex}
\label{ex:GAP}
Let $S$ be the inverse semigroup defined by the inverse semigroup presentation:
\[
\langle x,\ y \mid
  xy= xy^{-1}= yx=  yx^{-1}= y^{-1}x^{-1} = x ^2,\ yy^{-1} = xx^{-1}
\rangle.
\]
It can be shown, for example, using GAP~\cite{GAP4, Mitchell2023aa}, that: 
\[S = \{x, y, x^{-1}, y^{-1}, x^2, xx^{-1}, x^{-1}x, x^{-1}y, y^{-1}x, y^{-1}y, x^3\},\]
that $S$ is $E$-disjunctive, 
the least right congruence $\rho$ on $S$ containing the pair $(x ^ 3, x)$ has non-trivial classes:
\[
\{ x, y, x^3 \},\  \{x^2, xx^{-1}\},
\]
and that the idempotents of $S$ are:
\[
x^{-1}x,\ y^{-1}y,\ xx^{-1},\ x^2.
\]
Hence $\rho$ is idempotent-pure, using the obvious definition of this notion for right congruences. It is also possible to show using~\cite{MalandroNonLatticesPage, MalandroLatticesPage}, based on \cite{Malandro}, that there is no smaller $E$-disjunctive inverse semigroup admitting such a right congruence, that $S$ is the unique inverse semigroup (up to isomorphism) of size $11$ admitting such a right congruence, and even that $\rho$ is the only such right congruence on $S$.

      \begin{figure}
      \begin{tikzpicture}
        [scale=.5, auto=left,every node/.style={rectangle}]
        
        \node (main) at (0, 5) {\(
        \arraycolsep=3pt\def\arraystretch{1.5}
        \begin{array}{|c|c|c|}
            \hline
            xx^{-1} & x^{-1} & y^{-1} \\ \hline
           x  & x^{-1}x & y^{-1}x \\ \hline
           y  & x^{-1}y & y^{-1}y \\ \hline
        \end{array}\)};
        \node (C2) at (0, 0) {\(\begin{array}{|c|}
            \hline x^2, x^3 \\ \hline 
        \end{array}\)};
        \draw (main) to (C2);

      \end{tikzpicture}
      \caption{Egg-box diagram of an \(E\)-disjunctive inverse semigroup with a non-trivial
      idempotent-pure right congruence.}
      \label{idem_pure_rt_congs_fig}
    \end{figure}
\end{ex}

For our next example, we require the definition of Thompson's group \(V\), which we define below. For
a more comprehensive introduction to Thompson's group $V$, we refer the reader to \cite{CannonFloydParry}.

\begin{dfn}[Thompson's group $V$.]
      Let \(\mathfrak{C}\) denote the Cantor space, with underlying set \(\{0, \ 1\}^\omega\) (that is,
  infinite sequences of \(0\)s and \(1\)s), and using the product topology induced from the discrete
  topology of \(\{0, \ 1\}\). 
  We denote the free monoid on $\{0, 1\}$ by $\{0, 1\} ^ *$ (i.e. the monoid of finite sequences of $0$s and $1$s with concatenation as the operation).
  If \(w \in \{0, \ 1\}^\ast\), then we define
  \[
  w \mathfrak{C} = \makeset{x\in  \mathfrak{C}}{\(w \text{ is a prefix of } x\)}.
  \]
Note that these sets are clopen, and the collection of all such sets is a basis for \(\mathfrak{C}\).

Let \(F_1\) and \(F_2\) be finite subsets of \(\{0, \ 1\}^\ast\) such that $|F_1| = |F_2|$, and 
\[
\makeset{w \mathfrak{C}}{\(w\in F_1\)} \quad  \text{ and } \quad \makeset{w\mathfrak{C}}{\(w\in F_2\)}\]
are partitions of \(\mathfrak{C}\). We call such subsets of \(\{0, 1\}^\ast\) \textit{complete antichains}. If \(u \in \mathfrak C\), then 
since $F_1$ partitions $\mathfrak{C}$ there exists $w_u\in F_1$ that is a prefix of $u$. 
In this case, we write $u = w_uv_u$ where $v_u\in \mathfrak{C}$  is just the suffix of $u$ following $w_u$.
The \textit{prefix exchange map} \(f \colon ~\mathfrak{C}~\to~\mathfrak{C}\) \textit{between} \(F_1\) and \(F_2\) induced by a bijection \(\phi \colon F_1 \to F_2\) is defined by
\[
(u)f = (w_u \phi) v_u.
\]
Every such prefix exchange map is a homeomorphism of $\mathfrak{C}$. The group of all prefix exchange maps between any pair of complete antichains is
called \textit{Thompson's group \(V\)}.
\end{dfn}
The following example is a slight modification of the Thompson inverse monoid \(\operatorname{Inv}_{2,1}\) introduced in \cite{birget2016monoid} (we modify it as that monoid does not have infinitely many \(\mathscr{J}\)-classes).

\begin{theorem}
      \label{ex:thompsonsV}
There exists a finitely generated \(E\)-disjunctive inverse monoid with infinitely many $\mathscr{J}$-classes.
\end{theorem}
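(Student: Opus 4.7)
The plan is to construct a specific finitely generated inverse submonoid $M$ of the monoid of partial homeomorphisms of the Cantor space $\mathfrak{C}$, modelled on Birget's $\operatorname{Inv}_{2,1}$ but with enough additional constraint to force infinitely many $\mathscr{J}$-classes. The reason a modification is needed is that $\operatorname{Inv}_{2,1}$ inherits the transitivity of Thompson's $V$ on non-trivial clopen subsets of $\mathfrak{C}$ (any two such subsets are related by a prefix-exchange map), so any two non-zero idempotents are $\mathscr{J}$-related; any modification must break this transitivity while still preserving enough Thompson-like structure to ensure $E$-disjunctivity.

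First I would define $M$ as the inverse submonoid of $\operatorname{Inv}_{2,1}$ generated by a finite generating set of $V$ together with one additional partial generator $p$, taken for definiteness to be the partial identity on $0\mathfrak{C} \subsetneq \mathfrak{C}$. Finite generation is then immediate from the 2-generation of $V$. Elements of $M$ are then still partial prefix-exchange maps $(F_1, F_2, \phi)$, and any word in the generators has a well-defined ``depth'' recording the number of applications of $p$ and $p^{-1}$ used to express it; this combinatorial datum will be the candidate $\mathscr{J}$-class invariant.

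Next, I would prove $M$ is $E$-disjunctive using \cref{lem-idempotents}, so it suffices to show that any idempotent-pure congruence $\rho$ on $M$ is trivial on $E(M)$. Supposing $(e_1, e_2) \in \rho$ for distinct idempotents, after replacing $(e_1, e_2)$ by $(e_1 e_2, e_2)$ if necessary we may assume $e_1 < e_2$, so the clopen set $\dom(e_2) \setminus \dom(e_1)$ is non-empty. Because the copy of $V$ inside $M$ acts with sufficient richness on clopen subsets of $\mathfrak{C}$, I can locate $g \in V \subseteq M$ that acts non-trivially on $\dom(e_2) \setminus \dom(e_1)$ and as the identity on $\dom(e_1)$. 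Then $g e_1 = e_1 \in E(M)$ while $g e_2 \notin E(M)$, so $(e_1, g e_2) = (g e_1, g e_2) \in \rho$ contradicts idempotent-purity. This is the same strategy used in the proof of \cref{prop:first_wreath}.

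Finally, I would produce infinitely many $\mathscr{J}$-classes by exhibiting a sequence of idempotents $(f_n)_{n \geq 1} \subseteq M$, for instance the partial identities on $0^n\mathfrak{C}$, and showing that $f_n$ and $f_m$ are not $\mathscr{J}$-related when $n \neq m$. This reduces to verifying that the ``depth'' invariant above is well-defined on elements of $M$ (independent of factorisation) and is preserved by $\mathscr{J}$-equivalence. The main obstacle, and the real content of the construction, lies precisely here: one must rule out that sufficiently clever rearrangement by elements of $V$ could reduce the depth of $f_n$, so that any factorisation of anything $\mathscr{J}$-equivalent to $f_n$ genuinely requires at least $n$ occurrences of $p$ or $p^{-1}$. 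I expect this to be handled by defining a $\mathbb{Z}$-valued (or $\mathbb{N}$-valued) invariant on $M$ that is additive under multiplication and strictly tracks the use of $p$, and whose value on $f_n$ grows with $n$.
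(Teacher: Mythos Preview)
Your construction does not do what you want: the inverse monoid $M$ generated by $V$ together with the identity $p$ on $0\mathfrak{C}$ is exactly the monoid you already ruled out in your first paragraph. Conjugating $p$ by elements of $V$ produces the identity on every basic clopen $w\mathfrak{C}$, and hence on every proper non-empty clopen subset of $\mathfrak{C}$; combining with $V$ then gives all partial prefix-exchange maps. In particular your $f_n$ (the identity on $0^n\mathfrak{C}$) and $f_m$ are conjugate in $M$: choose $v\in V$ with $(0^m\mathfrak{C})v=0^n\mathfrak{C}$ and then $v^{-1}f_n v=f_m$. So $f_n\mathrel{\mathscr{J}}f_m$ for all $n,m$, and no additive ``depth'' invariant can separate them --- any such invariant would have to take the same value on conjugate elements. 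The obstacle you flag in your final paragraph is not merely technical; it is fatal to this particular $M$.

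The paper avoids this by adjoining, in addition to an idempotent with clopen domain, the identity $e$ on the \emph{countable} set $\{1^n0^\omega:n\geq 0\}$. Multiplying $e$ against suitable clopen idempotents (obtained from $V$ and the other generator) yields the identity function on a set of size exactly $n$, for every $n\in\mathbb{N}$. These lie in distinct $\mathscr{J}$-classes of $I_{\mathfrak{C}}$ for the trivial reason that their domains have different finite cardinalities, so no invariant needs to be constructed. The price is that domains of elements of $M$ are now either clopen or countable, and the $E$-disjunctivity argument must handle both cases; the paper does this not via \cref{lem-idempotents} but by showing that any non-trivial congruence collapses the $\mathscr{J}$-class of rank-$1$ maps (between points with eventually-zero tails) onto the zero, and that class contains non-idempotents.
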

\begin{proof}
  We give an example of a Thompson's group-like inverse monoid that is finitely generated and has infinitely
  many \(\mathscr{J}\)-classes.

  Let \(M\) be the inverse submonoid of the inverse monoid of partial permutations on \(\mathfrak{C}\) generated by
    Thompson's group \(V\) and
  the identity functions on \(1 \mathfrak{C}\) and
      $\{1^n0^\omega :n\in \N \cup \{0\}\}$. We denote the second of these identity functions by $e$.
  As Thompson's group \(V\) is $2$-generated (see for example \cite{BleakQuick}), 
  \(M\) is $4$-generated. 

  We next show that \(M\) has infinitely many \(\mathscr{J}\)-classes. We do so by showing that $M$ contains the identity function on a set of size $n$ for all \(n\in \N \cup \{0\}\). For different \(n\), these elements are not \(\mathscr{J}\)-related in \(I_{\mathfrak{C}}\) and hence \(M\) has infinitely many \(\mathscr{J}\)-classes.
 It suffices to show that every identity $f_n$ on the set 
 \[\{1^i0^\omega : i < n\}\]
 belongs to $M$. We denote 
  the identity function on the set \(\bigcup_{i< n}1^i0\mathfrak{C}\) by $g_n$. 
  It is straightforward to verify that $f_n = g_n e$, and so it suffices to show that $g_n\in M$.
 Note that
  \[
  1\mathfrak{C}= 1^n\mathfrak{C} \cup \bigcup_{1\leq i<n}1^i0\mathfrak{C}
  \]
  for any \(n\geq 1\). 
  If $F$ is any complete antichain in $\{0, 1\} ^ *$ containing $0$ and $1^n$, and $\phi\colon F \to F$ is the bijection swapping $0$ and $1 ^ n$. Then the corresponding prefix exchange map $\psi\in V$ maps $1\mathfrak{C}$ to $\dom(g_n)$. Hence 
   the conjugate of the identity function on \(1\mathfrak{C}\) by $\psi$ is \(f_n\). In particular, since the identity on \(1\mathfrak{C}\) is a generator of $M$,  $f_n$ belongs to $M$.
  
 We now show that \(M\) is \(E\)-disjunctive. 
 If $m\in M$ is arbitrary and the domain of $m$ is uncountable, then $m$ is a product of elements of $V$ and the second generator. Since the domain of the second generator is clopen and the elements of $V$ are prefix exchange maps, it follows that  the image of $m$ is clopen. Hence 
 every element \(m\) of \(M\) has a domain which is either clopen or countable.

If $J$ is the $\mathscr{J}$-class of $f_1$, then $J = Vf_1V$ and so $J$ consists of 
 functions with domain of size \(1\) which map an element with an infinite tail of zeros to another such element.

\begin{claim} If \(a,b\in M\setminus \{\varnothing\}\) are distinct, there is an idempotent \(e\in J\) such that \(ea, eb\in J \cup \{\varnothing\}\) and $ea\neq eb$.
\end{claim}
\begin{proof}
Suppose that $a$ and $b$ have the same domain. 
If \(\dom(a)= \dom(b)\) is countable, then there is an element \(a'\in J\) with $a'$ less than \(a\) in the usual partial order of inverse semigroups. 
Since $a\neq b$, there exists $u\in \mathfrak{C}$ such that $(u)a\neq (u)b$. In particular, we may choose $e\in J$  such that $e$ is the identity on $u$. In this case, $(u)ea= (u)a\neq (u)b= (u)eb$ and $ea, eb\in J$.

If \(\dom(a)= \dom(b)\) is clopen, then the set \(\makeset{x\in \dom(a)}{\((x)a \neq (x)b\)}\) is open and non-empty. Thus it contains a set \(w\mathfrak{C}\) for some $w\in \{0, 1\}^*$.  
By prefix replacement via Thompson's group $V$ (using the prefix \(w\)) we can find \(m'\in J\) such that \(\im(m')\subseteq w\mathfrak{C}\). Hence \({m'}^{-1}m'a, {m'}^{-1}m'b\in J\) are distinct and \({m'}^{-1}m'\) is the required idempotent in this case. 

If \(\dom(a)\neq \dom(b)\), then suppose without loss of generality that there is some \(u \in \dom(a) \setminus \dom(b)\) such that \(u\) ends with an infinite tail of zeros.
If we set $e$ to be the identity function on \(\{u\}\), then $e\in J$ since \(J\) comprises all functions with a domain of size \(1\) which map an element with an infinite tail of zeros to another such element.
\end{proof}

As \(J\) contains both idempotents and non-idempotents, it suffices to show that every non-trivial congruence on \(M\) identifies the \(\mathscr{J}\)-class $J$ with the 
$\mathscr{J}$-class of $f_0 = \varnothing$.

Let \(\rho\) be a non-trivial congruence on \(M\). Let \(a, b\in M\) be such that \(a\neq b\) and \((a, b)\in \rho\). By the claim above, there is \(e\in J\) such that \(ea, eb\in J\) and \(ea\neq eb\). Thus \(ea(ea)^{-1}\) and \(eb(ea)^{-1}\) are related by \(\rho\). But one of these is zero and the other is not, so all elements of \(J\) are related to zero by \(\rho\) and we are done.
\end{proof}
  The \textit{arithmetic inverse monoid} \(\mathcal A\), from \cite{Hines}, is the
  inverse submonoid
  of the symmetric inverse monoid \(I_{(\mathbb{Z}_{\geq 0})}\) generated by the set \(\{R_{a, b} \mid a, \ b \in
  \mathbb{Z}_{\geq 0}, \ a > b\}\), where \(R_{a, b} \in I_{(\mathbb{Z}_{\geq 0})}\) is
  defined by
  \[
    (n) R_{a, b} = 
    \begin{cases}
      \frac{n - b}{a} & n \equiv b \mod a \\
      \text{undefined} & \text{otherwise.}
    \end{cases}
  \]
 By \cite[Theorem 12]{Hines}, every non-zero element of \(\mathcal{A}\) may be written uniquely in the form
  \(R_{a, b} R_{c, d}^{-1}\), where \(c > d\) and \(a > b\) (note \((n)R_{c, d}^{-1} = nc + d\)). Since idempotents of
  \(I_{(\mathbb{Z}_{\geq 0})}\) are partial identities, it follows that the idempotents of
  \(\mathcal{A}\) are just \(R_{a, b} R_{a, b}^{-1}\) or \(\varnothing\) (where \(\varnothing\)
  is the empty map), for all \(a > b\) or the
  empty map \(\varnothing\). By \cite[Theorem 24]{Hines},
  if \((R_{a, b} R_{c, d}^{-1}), \ (R_{e, f} R_{g, h}^{-1}) \in \mathcal{A}
  \setminus \{\varnothing\}\), then
  \begin{equation}\label{eq-arith-prod}
    (R_{a, b} R_{c, d}^{-1}) \cdot (R_{e, f} R_{g, h}^{-1}) = \left\{
    \begin{array}{cl}
    R_{\frac{ae}{\gcd(c, e)}, \frac{a(r - d)}{c} + b} R_{\frac{gc}{\gcd(c, e)}, \frac{g(r - f)}{e} + h}^{-1} & \text{if }\gcd(c, \ e) \text{ divides } (d - f) \\
    \varnothing & \text{otherwise}, 
    \end{array}
    \right.
  \end{equation}
  where \(r\) is minimal such that \(r \equiv d \mod c\)
  and \(r \equiv f \mod e\). 
\begin{theorem}
  \label{ex:AIM}
    The arithmetic inverse monoid $\mathcal{A}$ is \(E\)-disjunctive.
\end{theorem}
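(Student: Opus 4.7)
The plan is to apply \cref{lem-idempotents}: it suffices to prove that every idempotent-pure congruence $\rho$ on $\mathcal{A}$ is trivial on $E(\mathcal{A})$. Suppose for contradiction that $(e, f) \in \rho$ with $e$ and $f$ distinct idempotents. Since $(e, f) \in \rho$ gives $(ef, f^2) = (ef, f) \in \rho$ with $ef \leq f$, I may replace the pair with $(ef, f)$ and so assume $e < f$ in the natural partial order. The strategy is then to exhibit $\alpha \in \mathcal{A}$ with $\alpha e$ an idempotent but $\alpha f$ not an idempotent; then $(\alpha e, \alpha f) \in \rho$ will contradict idempotent-purity.

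When $e = \varnothing$, I write $f = R_{a, b}R_{a, b}^{-1}$ with $a > b \geq 0$ and left-multiply by $R_{a, b}^{-1}$ to obtain $(\varnothing, R_{a, b}^{-1}) \in \rho$. The map $R_{a, b}^{-1}$ is the translation $n \mapsto an + b$, which is not an idempotent unless $(a,b) = (1,0)$; in the degenerate case $f$ equals the identity of $\mathcal{A}$, and then left multiplication by any $s \in \mathcal{A}$ gives $(\varnothing, s) \in \rho$, so choosing any non-idempotent $s$ (for example $R_{2,0}$) contradicts idempotent-purity.

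For the main case, $e < f$ with both non-zero, I write $e = R_{a_1, b_1}R_{a_1, b_1}^{-1}$ and $f = R_{a_2, b_2}R_{a_2, b_2}^{-1}$ with $a_i > b_i \geq 0$ and associated arithmetic progressions $P_i = \{b_i + a_i m : m \geq 0\}$, so $P_1 \subsetneq P_2$. Standard divisibility shows $a_2 \mid a_1$ and $b_1 \equiv b_2 \pmod{a_2}$; writing $a_1 = k a_2$ and $b_1 = b_2 + j a_2$, the constraints $b_2 < a_2$ and $b_1 < a_1$ force $0 \leq j < k$, and the strict containment $P_1 \subsetneq P_2$ then forces $k \geq 2$. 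I pick $l \in \{0, \ldots, k-1\} \setminus \{j\}$, set $D = b_2 + a_2 l$, and choose $B \in \{0, \ldots, a_1 - 1\} \setminus \{D\}$. Then $\alpha = R_{a_1, B}R_{a_1, D}^{-1}$ lies in $\mathcal{A}$ (since $a_1 > B$ and $a_1 > D$), sends $B + a_1 m \mapsto D + a_1 m$, is not an idempotent (because $B \neq D$), and has image $\{D + a_1 m : m \geq 0\} \subseteq P_2$ disjoint from $P_1$ (because $l \not\equiv j \pmod{k}$). Hence $\alpha e = \varnothing$ while $\alpha f = \alpha$, giving the required contradiction.

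The main obstacle I foresee is the construction of $\alpha$ in this second case: the unique-form constraint $a > b$ in the canonical expression $R_{a, b}R_{c, d}^{-1}$ means every non-zero element of $\mathcal{A}$ has an infinite arithmetic progression as its image, so one cannot directly force an image into a finite subset of $P_2 \setminus P_1$. The saving observation is that the very same constraint, applied to the parameters of $e$ and $f$, forces $k \geq 2$ whenever $e < f$, leaving precisely enough room to choose a residue class $l \not\equiv j \pmod{k}$ and thus a full progression contained in $P_2 \setminus P_1$.
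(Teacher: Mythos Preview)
Your proof is correct, but it takes a different route from the paper's.

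The paper works with the syntactic congruence directly: it shows that the trace of the \emph{right} syntactic congruence is trivial by associating to each idempotent $R_{a,b}R_{a,b}^{-1}$ the set $Y_e=\{s\in\mathcal A: es\in E(\mathcal A)\}$, and then proving via the explicit product formula \eqref{eq-arith-prod} and a number-theoretic argument that $Y_e$ determines $(a,b)$ uniquely. Your argument instead applies \cref{lem-idempotents}: given two related idempotents you reduce to $e<f$, read off the containment of arithmetic progressions, and build a concrete $\alpha=R_{a_1,B}R_{a_1,D}^{-1}$ whose image lies in $P_2\setminus P_1$, so that $\alpha e=\varnothing$ while $\alpha f=\alpha$ is non-idempotent. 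This is more elementary: it avoids the product formula entirely and works purely with the partial-bijection model, at the cost of the small case split for $e=\varnothing$. The paper's approach, by contrast, yields the slightly stronger information that each idempotent is already separated by \emph{one-sided} multipliers.

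One cosmetic point: your reduction ``replace $(e,f)$ by $(ef,f)$ and assume $e<f$'' tacitly uses that $ef\neq f$; if $ef=f$ then $f<e$ and you should swap the roles of $e$ and $f$. This is standard and does not affect the argument.
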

\begin{proof}
  We will show that $\mathcal{A}$ is $E$-disjunctive by showing that the syntactic congruence is trivial. 
  Recall that the right syntactic congruence of a semigroup $S$ is \(\{(s, t) \in S \mid sx \in E(S) \iff tx \in
  E(S) \text{ for all } x \in S\}\). The right syntactic congruence is the maximum idempotent-pure right congruence on $S$.
  Since the syntactic congruence is idempotent-pure and a right congruence, the syntactic congruence is contained in the right syntactic congruence.
  Since the syntactic congruence $\rho$
  is idempotent-pure, the kernel of \(\rho\) is \(E(\mathcal{A})\), and so \(\rho\) is trivial if and only if $\rho$ has a
  trivial trace. Hence it suffices, since the trace of \(\rho\) is contained in the trace of the right syntactic congruence, to show that the trace of the syntactic right congruence is trivial. By definition, the right syntactic congruence has a trivial trace if and only if each idempotent \(e\) of \(\mathcal{A}\) defines a unique set \(Y_e = \{s \in \mathcal{A} \mid es \in E(\mathcal{A})\}\).
  
  Let \(R_{a, b} R_{a, b}^{-1} \in E(\mathcal{A})
  \setminus \{\varnothing\}\). We will describe the set $Y_{R_{a, b} R_{a, b}^{-1}}$.
  Suppose that \(R_{e, f} R_{g, h}^{-1} \in \mathcal{A} \setminus\{\varnothing\}\) is such that \((R_{a, b} R_{a, b}^{-1}) \cdot (R_{e, f} R_{g, h}^{-1}) \in E(\mathcal{A})\). Then $(R_{a, b} R_{a, b}^{-1}) \cdot (R_{e, f} R_{g, h}^{-1}) = \varnothing$ or $=R_{x,y}R_{x, y}^{-1}$ for some $x$ and $y$. Hence by \eqref{eq-arith-prod}
  precisely one of
  the following holds:
  \begin{enumerate}
      \item  \(ae = ga\) and \(r = \frac{g(r - f)}{e} + h\), where \(r\) is minimal such that
  \(r \equiv b \mod a\) and \(r \equiv f \mod e\);
      \item \(\gcd(a, \ e)\) does not divide \(b - f\).
  \end{enumerate}
  If (1) holds, then \(a e = ga\) implies \(e = g\), and so 
  \[
    r = \frac{g(r - f)}{e} + h = r - f + h.
  \]
  Hence \(f = h\). So \(R_{e, f} R_{g, h}^{-1} = R_{e, f} R_{e, f}^{-1} \in E(\mathcal{A})\).
If (2) holds, then immediately from \eqref{eq-arith-prod} the product \((R_{a, b} R_{a, b}^{-1}) \cdot (R_{e, f} R_{g, h}^{-1}) = \varnothing$.
  Let
  \[
    X_{a, b} = \{(e, \ f) \in \mathbb{Z}_{\geq 0} \times \mathbb{Z}_{\geq 0} \mid e > f \text{ and }
    \gcd(a, \ e) \text{ does not divide } b - f\}.
  \]
  Suppose \(R_{a', b'} R_{a', b'}^{-1} \in E(\mathcal{A})\) is such that \(X_{a', b'} =
  X_{a, b}\). Then for all \(e > f \in \mathbb{Z}_{\geq 0}\), we have that
  \(\gcd(a, \ e)\) divides \(b - f\) if and only if \(\gcd(a', \ e)\) divides \(b' - f\).
  We will show that \(a = a'\) and \(b = b'\).
  
  Let \(p\) be a prime greater than \(a\) and \(a'\) (and thus coprime to both). Then
  \(\gcd(a, \ pa')\) divides
  \(0 = b - b\), and so \(a' = \gcd(a', \ pa')\) divides \(b' - b\). By symmetry, 
  \(a\) divides \(b' - b\). If $f < pa'$, then  \(\gcd(a, \ a') = \gcd(a, \ pa')\) divides
  \(b - (b' + f)\) if and only if \(\gcd(a', \ pa')\) divides \(f\). Since
  \(b - b'\) is a multiple of \(a'\), and hence a multiple of \(\gcd(a, \ a')\) also, \(\gcd(a, \ a')\)
  divides \(b - (b' + f)\) if and only if \(\gcd(a, \ a')\) divides \(f\). Thus for all \(f < pa'\), \(\gcd(a, \ a')\) divides \(f\) if and
  only if \(a' = \gcd(a', \ a')\) divides \(f\). In particular, \(\gcd(a, \ a')\) divides
  \(a\), and so \(a'\) divides \(a\). By symmetry, \(a\) divides \(a'\), and since
  \(a, \ a' \in \mathbb{Z}_{> 0}\), it follows that \(a = a'\). It remains to show that \(b = b'\). We have already shown that \(a\) divides \(b' - b\).
  Since \(b' < a\) and \(b < a\), \(|b' - b| < a\) and so \(|b' - b| = 0\), as
  required.

  It follows that if \(R_{a, b} R_{a, b}^{-1} \in E(\mathcal A) \setminus \{\varnothing\}\), then
  \[
  Y_{R_{a, b}R_{a, b}^{-1}}\setminus E(\mathcal{A}) = \{R_{e, f} R_{g, h}^{-1} \mid (e, f) \in X_{a, b}\}\setminus E(\mathcal{A}).
  \]
If $a'\neq a$ or $b'\neq b$, then $X_{a, b}\neq X_{a', b'}$ and so 
$Y_{R_{a, b}R_{a, b}^{-1}}\neq Y_{R_{a', b'}R_{a', b'}^{-1}}$, as required. Finally, \(R_{a, b} \in Y_{\varnothing}\), but \(R_{a, b} \notin Y_{R_{a, b} R_{a, b}^{-1}}\), \(Y_{\varnothing} \neq
Y_{R_{a, b} R_{a, b}^{-1}}\) for any non-zero idempotent \(R_{a, b} R_{a, b}^{-1}\).
\end{proof}

\section{Graph inverse semigroups}
\label{section-gis}

In this section we give a full characterisation of when an arbitrary graph inverse semigroup is $E$-disjunctive.

For this purpose, we define a \textit{graph} $\Gamma = (\Gamma^0, \Gamma^1, \textbf{s}, \textbf{r})$ to be a quadruple consisting of two sets, $\Gamma^0$ and $\Gamma^1$, and two functions $\textbf{s}, \textbf{r} \colon \Gamma^1 \to \Gamma^0$, called the \textit{source} and \textit{range}, respectively. The elements of $\Gamma^0$ and $\Gamma^1$ are called \textit{vertices} and \textit{edges}, respectively. A sequence $p=e_1e_2\cdots e_k$ of (not necessarily distinct) edges $e_i \in \Gamma^1$, such that $(e_i)\textbf{r} = (e_{i + 1})\textbf{s}$ for $1\leq i\leq k - 1$, is a \textit{path} from $(e_1)\s$ to $(e_k)\rr$. We define $(p)\s = (e_1)\s$ and $(p)\rr = (e_k)\rr$, and refer to $k$ as the \textit{length} of $p$. The elements of $\Gamma^0$ are paths of length $0$, and we denote by $\Path(\Gamma)$ the set of all paths in $\Gamma$.  

Define the \textit{graph inverse semigroup $S(\Gamma)$ of a graph $\Gamma$} to be the inverse semigroup with zero $0\not\in \Gamma ^ 0 \cup \Gamma ^ 1$, generated by $\Gamma^0$ and $\Gamma^1$, together with a set of elements $\Gamma^{-1} = \{e^{-1} \mid e\in \Gamma^1\}$, that satisfies the following four axioms, for all $u, v\in \Gamma_0$ and $e, f\in \Gamma^1$:
\begin{description}
 \item[(V)] \(vv=v\) and $vu = 0$ if \(v\neq u\),
 \item[(E1)] $(e)\textbf{s}\ e = e\ (e)\textbf{r} = e$,
 \item[(E2)] $(e)\textbf{r}\ e^{-1} = e^{-1}\ (e)\textbf{s} = e^{-1}$,
 \item[(CK1)] $f^{-1}f = (f)\textbf{r}$ and $e^{-1}f = 0$ if \(e\neq f\).
\end{description}
(Note that ``V'' is for ``vertex'', ``E'' is for ``edges'', and ``CK'' is for ``Cuntz-Kreiger'' given the origins of the study of graph inverse semigroups in the study of Cuntz-Kreiger algebras and the similarity of CK1 to the Cuntz-Kreiger relations.)  

 For every $v\in \Gamma^0$ we define $v^{-1} = v$, and for every $q = e_1\cdots e_k\in \Path(\Gamma)$ we define $q^{-1} = e_k^{-1}\cdots e_1^{-1}$. It follows directly, by repeated application of (CK1), that every non-zero element in $S(\Gamma)$ can be written in the form $pq^{-1}$ for some $p, q\in \Path(\Gamma)$. It is routine to show that $S(\Gamma)$ is an inverse semigroup, with $(pq^{-1})^{-1} = qp^{-1}$ for every non-zero $pq^{-1}\in S(\Gamma)$. 

The congruences of a graph inverse semigroup were characterised in~\cite{Wang2019}.

A \emph{Wang triple} $(H,W,f)$ on $\Gamma$ consists of a set $H \subseteq \Gamma^0$ such that $H$ is closed under reachability in $\Gamma$ (i.e. if $u\in H$ and there is a path from $u$ to $v\in \Gamma^0$, then $v\in H$ also), a set $W \subseteq \{v \in \Gamma^0 \setminus H \mid |(v)\s_{\Gamma\setminus H}^{-1}| = 1\}$, and a \emph{cycle} function $f: C(\Gamma^0) \to \Z^+\cup\{\infty\}$ (where $C(A)$ is the set of cycles consisting of vertices in the set $A\subseteq \Gamma ^ 0$) such that $(c)f = 1$ for all $c \in C(H)$, $(c)f = \infty$ for all $c \notin C(H\cup W)$, and the restriction of $f$ to $C(W)$ is invariant under cyclic permutations. In~\cite{Wang2019}, the term ``congruence triple" is used for this concept.

Given a Wang triple $(H, W, f)$ on a graph $\Gamma$, we define the corresponding congruence to be the least congruence on $S(\Gamma)$ containing the following set:
\begin{equation}\label{eq-0}
\begin{array}{rcl}
 (H\times \{0\}) \cup \{(w, ee^{-1}) \mid w \in W,\ (e)\mathbf{s} = w,\ (e)\mathbf{r}\not\in H\} \\ \cup \{(c^{(c)f}, (c)\s) \mid c\in C(W), \ (c)f \in \Z^+\}.
 \end{array}
\end{equation}

Henceforth we identify the Wang triples and the congruences they represent.

An \textit{isolated vertex} in a graph $\Gamma$ is a vertex $v\in \Gamma ^ 0$ such that $v \neq (e)\s$ and $v\neq (e)\rr$ for all $e \in \Gamma ^ 1$. An \textit{out-edge} of
a vertex \(v \in \Gamma ^ 0\) is an edge \(e \in \Gamma ^ 1\) such that \((e)\s = v\).

    \begin{theorem}
      \label{thm:wang-triple-ip}
      A Wang triple \((H, W, f)\) of a graph inverse semigroup \(S(\Gamma)\) is an idempotent-pure congruence if and only if \(H\) is a set
      of isolated vertices and \((c)f = \infty\) for all \(c \in C(W)\).
    \end{theorem}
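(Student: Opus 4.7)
The proof splits into two directions.

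\emph{Forward direction ($\Rightarrow$).} Assume the congruence $\rho$ associated to $(H,W,f)$ is idempotent-pure. Suppose some $v \in H$ is not isolated, so $v$ is incident to some edge $e$. If $(e)\rr = v$, the generator $(v,0) \in \rho$ of \eqref{eq-0} yields $(ev, e \cdot 0) = (e, 0) \in \rho$ by (E1). If instead $(e)\s = v$, the reachability-closure of $H$ forces $(e)\rr \in H$, so $((e)\rr, 0) \in \rho$, and similarly $(e \cdot (e)\rr, 0) = (e, 0) \in \rho$. Since $e$ is never an idempotent, this contradicts idempotent-purity. Moreover, if $(c)f = n \in \Z^+$ for some $c \in C(W)$, the generator $(c^n, (c)\s)$ of $\rho$ relates the non-idempotent cycle-path $c^n$ (of length $n|c| \geq 1$) to the idempotent $(c)\s$, again a contradiction.

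\emph{Backward direction ($\Leftarrow$).} Assume $H$ consists of isolated vertices and $(c)f = \infty$ for all $c \in C(W)$. Under these hypotheses the generating set of $\rho$ in \eqref{eq-0} reduces to pairs \emph{between idempotents}: namely $(v,0)$ for each isolated $v \in H$, and $(w, e_w e_w^{-1})$ for each $w \in W$, where $e_w$ is the unique out-edge of $w$ in $\Gamma$ (well-defined because $H$ contains no vertex adjacent to an edge, whence $|(w)\s^{-1}_{\Gamma\setminus H}| = |(w)\s^{-1}| = 1$). I will show that $\rho$ is \emph{type-preserving}, meaning that $(a,b) \in \rho$ implies $a \in E(S(\Gamma)) \iff b \in E(S(\Gamma))$; this is equivalent to idempotent-purity. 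Since type-preservation is closed under reflexive, symmetric, and transitive closure, it suffices to verify it on elementary pairs $(s\alpha t, s\beta t)$ with $(\alpha, \beta)$ a generator and $s, t \in S(\Gamma)^1$.

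For the generator $(v, 0)$: since $v$ is isolated, $v$ admits no non-trivial product other than $v \cdot v = v$, so $svt \in \{0, v\}$ for every $s, t$, matching $s \cdot 0 \cdot t = 0$; in every case both sides are idempotents. For the generator $(w, e_w e_w^{-1})$: writing $s = pq^{-1}$ and using (E1), (E2), (CK1) together with the uniqueness of $e_w$ as an out-edge of $w$, a direct computation shows that $sw = s e_w e_w^{-1}$ unless $q$ is the length-zero path $w$; in that exceptional case $s$ reduces to a path $p$ with $(p)\rr = w$, and the elementary pair becomes $(pt, p e_w e_w^{-1} t)$. A symmetric analysis on $t$ shows that this pair is either trivial, both sides vanish, or takes the form $(pv^{-1}, p e_w (v e_w)^{-1})$ for some path $v$ ending at $w$; in this last form both sides are idempotent precisely when $p = v$, and are both non-idempotent otherwise. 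Hence every elementary pair is type-preserving.

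The main obstacle is the case analysis for the generator $(w, e_w e_w^{-1})$. The governing principle is that $w$ and $e_w e_w^{-1}$ represent partial identities differing only on the length-zero path $w$ itself (a consequence of $e_w$ being the unique out-edge of $w$), so this single-path discrepancy either propagates uniformly as a prefix extension on both sides of any product or collapses entirely, never producing an idempotent on one side and a non-idempotent on the other.
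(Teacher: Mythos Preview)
Your proof is correct. The forward direction matches the paper's argument essentially verbatim. The backward direction, however, takes a genuinely different route from the paper.

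The paper proceeds by \emph{explicitly describing} the congruence: it defines sets $A=(\{0\}\cup H)^2$ and $B$ (pairs of the form $((xp_1)(yp_1)^{-1},(xp_2)(yp_2)^{-1})$ where the $p_i$ are paths through $W$), sets $\rho=A\cup B\cup\Delta_S$, observes by inspection that $\rho$ is type-preserving, and then proves $\rho=(H,W,f)$ by checking both containments. The containment $\rho\supseteq(H,W,f)$ requires verifying that $\rho$ is a congruence, which the paper does via a transitivity check (three sub-cases) followed by an eleven-case analysis for right compatibility with each generator of $S(\Gamma)$.

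You instead work directly with elementary transitions $(s\alpha t,s\beta t)$ and never describe the congruence explicitly. Your reduction---that type-preservation on elementary pairs propagates to the generated congruence---is standard and valid, and your case analysis for the generator $(w,e_we_w^{-1})$ is correct (the crucial point being that uniqueness of $e_w$ forces any nontrivial path out of $w$ to begin with $e_w$, collapsing most cases). This buys you a shorter argument focused purely on idempotent-purity. What the paper's approach buys in exchange is an explicit normal form for the congruence classes, which is of independent interest beyond the theorem itself. Your write-up would benefit from noting that the cases $s=1$, $t=1$, and $s=0$ are absorbed (the first two by taking $s=w$ or $t=w$, the last trivially), and from stating the ``symmetric analysis on $t$'' a bit more carefully, since the roles of $p$ and $q$ are not literally symmetric once $s$ has been reduced to a path.
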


    \begin{proof}
        \((\Rightarrow)\): We prove the contrapositive. If \(H\) contains a vertex \(v\) that is not isolated, then there is  \(e\in \Gamma ^ 1\) such that \((e)\rr = v\) or \((e)\s = v\). Thus
        \((e, 0)\in (H, W, f)\). But $e$ is not an idempotent, and $0$ is idempotent, and so \((H, W, f)\) is not idempotent-pure.

        If \((c)f = x \in \Z^+\) for some $c\in C(W)$, then \((c^{x}, (c)\s) \in (H, W, f)\). Since
        \(c^x\) is not an idempotent and \((c)\s\) is an idempotent, \((H, W, f)\) is not idempotent-pure.

        \((\Leftarrow)\): We define
        \(A = (\{0\}\cup H)^2\)
        and we define $B$ to consist of the pairs \[((xp_1) (yp_1)^ {-1}, (xp_2)(yp_2)^{-1})\in S(\Gamma)\] such that 
        \( x, y, p_1,p_2\in \Path(\Gamma), (x)\rr = (p_1)\s= (p_2)\s = (y)\rr;
        (e)\s\in W,\) for all \( e \)  in \(p_1\) or \(p_2\).
        Let \(\rho= A \cup B \cup \{(x, x):x\in S(\Gamma)\}\). Note that \(\rho\) never relates an idempotent to a non-idempotent. It is thus sufficient to show that $\rho = (H, W, f)$. 
        If $(x, y)\in  A$,  then $x$ and \(y\) both belong to $H \cup \{0\}$, and so $(x, y) \in (H, W, f)$ also.
        
        If $((xp_1) (yp_1)^ {-1}, (xp_2)(yp_2)^{-1})\in B$, where $p_1 = e_1\cdots e_n$  for some $e_1, \ldots, e_n\in \Gamma ^ 1$, then 
    \[
    (xp_1(yp_1)^ {-1}, xy ^ {-1}) = (xp_1p_1 ^{-1}y^ {-1}, xy ^{-1}) = (xe_1\cdots e_n e_n ^ {-1} \cdots e_1 ^ {-1}y^ {-1}, xy ^{-1}).
    \]
    Since $(\s(e_i), e_ie_i ^ {-1}) \in (H, W, f)$, it follows that for every $i$
    \begin{multline*}
    (xe_1\cdots e_i e_i ^ {-1} \cdots e_1 ^ {-1}y^ {-1}, xe_1\cdots e_{i-1}(e_i)\s e_{i-1} ^ {-1} \cdots e_1 ^ {-1}y^ {-1}) \\
    = (xe_1\cdots e_i e_i ^ {-1} \cdots e_1 ^ {-1}y^ {-1}, xe_1\cdots e_{i-1} e_{i-1} ^ {-1} \cdots e_1 ^ {-1}y^ {-1})\in (H, W, f).
    \end{multline*}
    Hence, by transitivity, 
    \[
    (xe_1\cdots e_n e_n ^ {-1} \cdots e_1 ^ {-1}y^ {-1}, xy ^{-1}) \in (H, W, f).
    \]
    So \((xp_1(yp_1)^ {-1}, xy ^ {-1})\in  (H, W, f)\) and hence by symmetry \((xp_2(yp_2)^ {-1}, xy ^ {-1})\in (H, W, f)\). It follows that $\rho \subseteq (H, W, f)$.

    For the converse, it suffices to show that $\rho$ is a congruence, and that $\rho$ contains the pairs in \eqref{eq-0}. 

    To show that $\rho$ is transitive, we will check individually if \(A \circ B\),
    \(A \circ A\) and \(B \circ B\) are all contained in \(\rho\). For \(A \circ A\), if \((x, y),
    (y, z) \in A\), then \(x, z \in H \cup \{0\}\) and so \((x, z) \in A\). For \(A \circ B\), if
    \((z, xp_1 (yp_1)^ {-1}) \in A\) and \((xp_1 (yp_1)^ {-1}, xp_2 (y p_2)^{-1}) \in B\). As it lies in a pair in
    \(A\), \(xp_1 (yp_1)^{-1}\) is a
    vertex in \(H\) or \(0\). As \(0\) can never occur in a pair in \(B\), we can assume that \(xp_1 (yp_1)^{-1}\) is a vertex in \(H\). However, as the first entry in a pair in \(B\), \(xp_1 (yp_1)^ {-1}\) either contains a
    vertex in \(W\), or \(p_1\) is the empty path. The first case cannot happen as there is no path from
    \(\Gamma \setminus H\) to \(H\), and \(W \cap H = \varnothing\). In the second case, \(xp_1 (yp_1)^ {-1} = xy^{-1}\), which lies in
    \(A\). Thus \(xy^{-1}\) is a vertex in \(H\), and so \(x = y \in H\). As \(p_2\) is either empty or a
    path starting in \(H\) which intersects a vertex in \(W\), which never happens, we have that \(p_2\)
    is empty. Therefore \((z, xp_2 (yp_2)^{-1}) = (z, xy^{-1}) = (z, xp_1 (yp_1)^{-1})\in A\).
        
    For \(B \circ B\), suppose \(\mathbf{q} = (xp_1 (yp_1)^ {-1}, xp_2 (y p_2)^{-1}) \in B\) and \(\mathbf{r} = (wp_3 (zp_3)^ {-1}, wp_4 (z p_4)^{-1}) \in B\), where \(xp_2 (y p_2)^{-1} = wp_3 (zp_3)^ {-1}\). Either \(x\)
    is a prefix of \(w\) or \(w\) is a prefix of \(x\); without loss of generality suppose \(w\) is a
    prefix of \(x\). So \(x = wu\), for some path \(u\). It follows that \(p_3 = up_2\). So
    \(xp_1 = wup_1\) and \(\mathbf{q} = (wup_1 (yp_1)^{-1}, wup_2 (y p_2)^{-1})\). Additionally, as
    \(p_3 = u p_2\), \(\mathbf{r} = (wup_2 (zup_2)^ {-1}, wp_4 (z p_4)^{-1})\). We also have that
    \(y = zu\), as \(wup_2 (yp_2)^{-1} = wup_2 (zup_2)^{-1}\). Thus \(\mathbf{q} = (wup_1 (zup_1)^{-1}, wup_2 (zu p_2)^{-1})\)
    and \(\mathbf{r} = (wp_3 (zp_3)^ {-1}, wp_4 (z p_4)^{-1})\). It follows that \[(wup_1(zup_1)^{-1}, wp_4 (zp_4)^{-1}) \in B,\]
    as required.

    We now show that \(\rho\) is a congruence. If \((x, y)\in \rho\), then $(x ^ {-1}, y ^ {-1})\in \rho$, and so it suffices to show
    that \(\rho\) is a right congruence.
    If \((x, y) \in A\) and \(s \in \Gamma^0 \cup \Gamma^1 \cup (\Gamma^1)^{-1}\), then as \(H\) consists of isolated vertices, \(xs, ys \in H \cup \{0\}\), and so \((xs, ys) \in A\). If \((xp_1 (yp_1)^ {-1}, xp_2(yp_2)^{-1}) \in B\)
    and \(s \in \Gamma^0 \cup \Gamma^1 \cup (\Gamma^1)^{-1}\), then at least one of the following cases applies:
    \begin{enumerate}
        \item \(s\in \Gamma ^ 0\) and \(s =  (y)\s\). In this case, \((xp_1 (yp_1)^ {-1} s, xp_2(yp_2)^{-1} s) = (xp_1 (yp_1)^ {-1}, xp_2(yp_2)^{-1}) \in B\).
        \item \(s \in \Gamma ^ 0\) and \(s \neq (y)\s\). In this case \(((xp_1 (yp_1)^ {-1}s, xp_2(yp_2)^{-1}s) = (0, 0) \in A\).
        \item \(s \in \Gamma ^ 1\) and \((s)\s \neq (y)\s\). Again, \(((xp_1 (yp_1)^ {-1}s, xp_2(yp_2)^{-1}s) = (0, 0) \in A\).
        \item \(s \in \Gamma ^ 1\) and \(s\) is the first edge in \(y\). Then \(y = s y'\), for some
        path \(y'\) in \(\Gamma\), and so \(((xp_1 (yp_1)^ {-1}s, xp_2(yp_2)^{-1}s) = ((xp_1 (y'p_1)^ {-1}, xp_2(y'p_2)^{-1}) \in B\). 
        \item \(s \in \Gamma ^ 1\), \(y\notin \Gamma^0\), and \((s)\s = (y)\s\), \(s\) is not the first edge in \(y\). Again, 
        \[((xp_1 (yp_1)^ {-1}s, xp_2(yp_2)^{-1}s) = (0, 0) \in A.\]
        \item \(s \in \Gamma ^ 1\), \(y \in \Gamma^0\setminus W\)
        and \((s)\s = (y)\s\). From the definition of $B$, the source of each edge in
        \(p_1\) lies in \(W\), and \((p_1)\s = (y)\rr = y \notin W\). In particular, $p_1$ contains no edges, i.e. \(p_1 \in \Gamma^0\). Similarly, \(p_2 \in
        \Gamma^0\). Thus \(p_1 = p_2 = y\). So \(((xp_1 (yp_1)^ {-1}s, xp_2(yp_2)^{-1}s) = ((xy (y)^ {-1}s, xy(y)^{-1}s) = (xs, \ xs) \in \rho\).
        \item \(s \in \Gamma ^ 1\), \(y \in W\), $(s)\s = y$, and \(p_1, p_2 \notin\Gamma ^ 0\). Then by the choice of \(W\), $s$ is the unique edge with source $y$, and so \(p_1=sp_1'\) and \(p_2=sp_2'\) for some paths \(p_1', p_2'\) . Hence
        \[(xp_1 (yp_1)^ {-1}s, xp_2(yp_2)^{-1}s) =(xsp_1' p_1'^ {-1}, xsp_2'p_2'^{-1}) \in B.\]
        \item \(s \in \Gamma ^ 1\), \(y \in W\), $(s)\s = y$, and \(p_1, p_2 \in \Gamma ^ 0\). As
        \(s(p_1) = \s(p_2) = \rr(y) = y\), we have \(p_1 = y = p_2\), and so \((xp_1 (yp_1)^ {-1}, xp_2(yp_2)^{-1}) = (xp_1 (yp_1)^ {-1}, xp_1(yp_1)^{-1}) \in \rho\).
        \item \(s \in \Gamma ^ 1\) and \(y \in W\), $(s)\s = y$ and precisely one of \(p_1\) and
        \(p_2\) lies in \(\Gamma ^ 0\). Assume without loss of generality, that
        \(p_1\in \Gamma^0\). Then as \(W\) is part of a Wang triple, every
        vertex in \(W\) has a unique out-edge, and so \(s\) is the unique edge
        with source \(y\), and so \(p_2= sp_2'\) for some path \(p_2'\). 
        Hence
         \[(xp_1 (yp_1)^ {-1}s, xp_2(yp_2)^{-1}s) =(xs, xsp_2'p_2'^{-1}) \in B.\]
        \item \(s ^ {- 1} \in \Gamma ^ 1\) and \((s)\s = (y)\s\). Then let \(z = s^{-1} y\). Note
        \((z)\rr = (y)\rr\). In addition,
        \[
        ((xp_1 (yp_1)^ {-1}s, xp_2(yp_2)^{-1}s) = ((xp_1 (s^{-1}yp_1)^ {-1}, xp_2(s^{-1}yp_2)^{-1})
        = ((xp_1 (zp_1)^ {-1}, xp_2(zp_2)^{-1}) \in B.
        \]
        \item  \(s ^{-1}\in \Gamma ^ 1 \) and \((s)\s \neq (y)\s\). Again, \(((xp_1 (yp_1)^ {-1}s, xp_2(yp_2)^{-1}s) = (0, 0) \in A\). 
    \end{enumerate}
    Hence $\rho$ is a congruence, as required. 
    \end{proof}

    Next, we state and prove the main theorem of this section. 

    \begin{theorem}\label{Thm:graph_inverse}
        A graph inverse semigroup defined using a graph \(\Gamma\) is \(E\)-disjunctive if and only if
        \(\Gamma\) has no isolated vertices, and every vertex in \(\Gamma\) has either \(0\) or at
        least \(2\) out-edges.
    \end{theorem}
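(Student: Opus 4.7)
The plan is to combine \cref{thm:wang-triple-ip} with the observation that every non-trivial congruence on $S(\Gamma)$ arises from a Wang triple (by the characterization of~\cite{Wang2019}). Thus $S(\Gamma)$ is $E$-disjunctive precisely when the only Wang triple $(H, W, f)$ satisfying both (a) $H$ consists of isolated vertices and (b) $(c)f = \infty$ for all $c\in C(W)$, is the one generating the trivial congruence. The conditions on $\Gamma$ in the statement are exactly those that force such a Wang triple to have $H=W=\varnothing$ and hence generate only the equality relation.

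For the $(\Leftarrow)$ direction, assume $\Gamma$ has no isolated vertices and no vertex with exactly one out-edge, and let $(H, W, f)$ be an idempotent-pure Wang triple. By \cref{thm:wang-triple-ip}, $H$ consists of isolated vertices, so $H=\varnothing$. Then the membership condition for $W$ simplifies to asking that every $v\in W$ has exactly one out-edge in $\Gamma$, which by hypothesis is impossible, so $W=\varnothing$ too. The three generating sets in~\eqref{eq-0} are then all empty (the third because $C(W)$ is empty and the cycle-function condition forces $(c)f=\infty$ elsewhere anyway), so the associated congruence is trivial.

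For the $(\Rightarrow)$ direction I would argue the contrapositive by exhibiting a non-trivial idempotent-pure Wang triple in each failure case. If $v\in \Gamma^0$ is isolated, then $H=\{v\}$ is closed under reachability (vacuously); taking $W=\varnothing$ and $(c)f=\infty$ for every cycle $c$ gives a valid Wang triple whose hypotheses of \cref{thm:wang-triple-ip} are satisfied, and the corresponding congruence contains $(v,0)$, which is non-trivial since $v\neq 0$ in $S(\Gamma)$. If instead some $v\in\Gamma^0$ has a unique out-edge $e$, take $H=\varnothing$, $W=\{v\}$, and $(c)f=\infty$ for all cycles $c$; the cyclic-permutation invariance on $C(W)$ is automatic (the only cycles to consider are powers of a loop at $v$, if $e$ is such a loop), so this is a valid Wang triple, it is idempotent-pure by \cref{thm:wang-triple-ip}, and the corresponding congruence contains the pair $(v, ee^{-1})$, which is non-trivial because $v$ and $ee^{-1}$ are distinct elements of $S(\Gamma)$ (they are different idempotents, $ee^{-1}\lneq v$).

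The one place that requires a little care is verifying that the constructed Wang triples genuinely satisfy every clause of the definition (in particular, that $H=\{v\}$ is reachability-closed when $v$ is isolated, that $W=\{v\}$ meets the ``unique out-edge in $\Gamma\setminus H$'' constraint after removing $H=\varnothing$, and that the cycle function $f\equiv\infty$ is compatible with the side conditions, including cyclic invariance on $C(W)$). Once these bookkeeping checks are dispatched, the theorem follows immediately from \cref{thm:wang-triple-ip}.
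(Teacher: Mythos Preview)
Your proof is correct and follows essentially the same approach as the paper's: both directions are deduced from \cref{thm:wang-triple-ip} together with the Wang classification of congruences, with the $(\Rightarrow)$ direction handled contrapositively by exhibiting a non-trivial idempotent-pure Wang triple in each failure case. The only cosmetic difference is that the paper takes $H$ to be the set of \emph{all} isolated vertices (respectively $W$ to be the set of \emph{all} vertices with a unique out-edge), whereas you use singletons $H=\{v\}$ or $W=\{v\}$; your choice works just as well and the bookkeeping checks you flag are all routine.
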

    
    \begin{proof}
      By \cref{thm:wang-triple-ip}, \(S(\Gamma)\) admits a non-trivial idempotent-pure
      congruence if and only if it has a (potentially empty) set \(H\) of isolated vertices and a set \(W \subseteq
      \{v \in \Gamma^0 \setminus H \mid |(v)\s_{\Gamma \setminus H}^{-1}| = 1\}\) such that there is
      a cycle function \(f\) on \(\Gamma\) such that \((c)f = \infty\) for all \(c \in C(W)\).

      \((\Rightarrow)\): Suppose that \(\Gamma\) has isolated vertices or there exists a vertex
      \(v \in \Gamma^0\) with $|(v)\s^{-1}| = 1$. In the former case, taking \(H\) to be the non-empty
      set of isolated vertices, and \(W = \varnothing\) gives a non-trivial idempotent-pure congruence. 
      In the latter case, \(H = \varnothing\) and \(W= \{v\in \Gamma^0: |(v)\s^{-1}| = 1\}\neq \varnothing\) gives a non-trivial idempotent-pure congruence. Thus \(S(\Gamma)\) is not \(E\)-disjunctive.

      \((\Leftarrow)\): Suppose that \(\Gamma\) has no isolated vertices and that every vertex has
      either \(0\) or \(2\) out-edges. For \(\Gamma\) to admit a non-trivial idempotent-pure congruence,
      we would have \(H = \varnothing\) and \(W = \varnothing\), as these are the only possibilities
      for \(H\) and \(W\). However, the congruence defined by this pair is equality, and so
      \(S(\Gamma)\) is \(E\)-disjunctive.
    \end{proof}

\section{Finite monogenic inverse monoids}
\label{section-monogenic}
In this section we characterise those finite monogenic inverse monoids that are $E$-disjunctive. Although we concentrate on monoids rather than semigroups in this section,  analogues of the main results hold for monogenic inverse semigroups, and these can be concluded from the results in this section together with \cref{cor:adj-1}. In order to do this, we require the following characterisation of finite monogenic inverse monoids, and some related results, mostly arising from 
\cite{Preston1986aa}. 

If $i_1, \ldots, i_m\in \{1, \ldots, n\}$, then we denote by 
\([i_1, \ldots, i_m]\) the element of the symmetric inverse monoid $I_n$ with domain $\{i_1, \ldots, i_{m - 1}\}$, image $\{i_2, \ldots, i_m\}$, and that maps $i_j$ to $i_{j + 1}$ for all $j\in \{1, \ldots, m - 1\}$ (and thus with the image of \(i_m\) not defined).

\begin{lem}[Lemma 8 in \cite{elliott2023counting}]\label{cor-chain-cycle}
        If $M$ is a finite monogenic inverse submonoid of \(I_n\), then there exist $a, b\in \N\setminus \{0\}$ such that \(a+b=n\) and
    $M$ is isomorphic to the inverse submonoid of $I_n$ generated by 
    \[x=[1, \ldots, a]\cup p\]
     \(p\) is some permutation on the set \(\{a+1, \ldots, a+b\}\). 
    Moreover, if $o(p)$ is the (group theoretic) order of $p$, then $M$ is isomorphic to the submonoid of \(I_{a+o(p)}\) generated by \([1, \ldots, a]\cup (a+1, \ldots, a+o(p))\).
\end{lem}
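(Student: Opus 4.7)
Let $M = \genset{y}_{\text{inv}} \leq I_n$. Every partial bijection on $\{1,\ldots,n\}$ decomposes uniquely according to its functional digraph: the set $\{1,\ldots,n\}$ partitions into orbits under $\genset{y,y^{-1}}$, each orbit being either a \emph{cycle} (on which $y$ acts as a permutation) or a \emph{chain} (along which $y$ translates until the next application becomes undefined). Let $a$ be the number of points in the longest chain orbit of $y$, and let $d$ be the least common multiple of the cycle lengths of $y$; equivalently, $d$ is the order of the permutation $p$ obtained by restricting $y$ to the union of its cycle orbits.

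My plan is to invoke the classification of finite monogenic inverse semigroups due essentially to Preston, which states that such a semigroup is determined up to isomorphism by two numerical invariants of its generator, conventionally called the \emph{index} and \emph{period}. First I would verify by a direct orbit-by-orbit computation that $M$ has invariants $(a, d)$: as $k$ increases, the positive powers $y^k$ successively restrict away the shorter chain orbits, stabilise once $k \geq a$ (after which every chain orbit has been killed and only the permutation part remains), and from that point on cycle through the $d$ distinct powers of $p$ before returning. Products of mixed positive and negative powers $y^{i_1}y^{-j_1}\cdots$ reduce, via the relations $ww^{-1}w = w$ and the commutativity of idempotents, to elements determined entirely by the above data.

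The same orbit-by-orbit computation applied to the canonical generators $y' = [1,\ldots,a] \cup p'$, where $p'$ is any permutation on $\{a+1,\ldots,a+b\}$ of order $d$, and $y'' = [1,\ldots,a] \cup (a+1,\ldots,a+d)$, shows that they also yield inverse monoids with invariants $(a, d)$. The classification then gives $\genset{y}_{\text{inv}} \cong \genset{y'}_{\text{inv}} \cong \genset{y''}_{\text{inv}}$, which are the two assertions of the lemma.

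The main obstacle is not the orbit analysis of $y$ but the underlying classification: establishing that index and period together form a complete isomorphism invariant for finite monogenic inverse semigroups requires a careful study of the free monogenic inverse semigroup and its congruence lattice. Granting this classical fact (available in Preston's work), the remaining content of the proof is a routine bookkeeping exercise on the action of $y$ on each orbit.
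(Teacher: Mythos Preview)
The paper does not give its own proof of this lemma: it is simply quoted as Lemma~8 of \cite{elliott2023counting} and used as input for the rest of Section~\ref{section-monogenic}. So there is nothing in the paper to compare your argument against.

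That said, your outline is the standard route and is essentially what the cited reference does: reduce a generator of a finite monogenic inverse monoid to its orbit structure (chains and cycles), read off the index (longest chain length) and period (order of the permutation part), and then invoke the classification of finite monogenic inverse semigroups by these two invariants. Your identification of the classification theorem as the real content, with the rest being orbit bookkeeping, is accurate. One small point to keep an eye on for the first assertion (where you must stay inside \(I_n\) with \(a+b=n\)): with \(a\) equal to the longest chain length you have \(b=n-a\), which is at least the number of cycle points of \(y\), so you can always realise a permutation of the correct order on \(\{a+1,\ldots,n\}\) by padding the original cycle part with fixed points; since you are working with inverse \emph{monoids}, these fixed points do not change the generated monoid.
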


Let $n, k\in \N$ be such that $n\geq 0$ and $k\geq 1$ and let \(S_{n, k}\) be defined by the inverse monoid presentation
\[ \InvPres{x}{x^nx^{-n} =  x^{n+1}x^{-(n+1)},  x^nx^{-n}= x^nx^{-n}x^k }.\]

\begin{theorem}[Theorem 10 in \cite{elliott2023counting}]\label{thm-isomorphism}
  The inverse submonoid of \(I_{n+k}\) generated by a partial permutation
  \[[1, 2, \ldots , n]\cup (n+1, n+2, \ldots, n+k)\]
  is isomorphic to $S_{n, k}$
  for all \(n\geq 0\) and \(k\geq 1\).
  Moreover, if \(m\geq 0\) and \(l \geq 1\), then \(S_{n, k}\cong S_{m, l}\) if and only if \(n = m\) and \(k=  l\). 
\end{theorem}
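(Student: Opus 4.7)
The plan is to construct the obvious surjection \(\phi \colon S_{n, k} \to M\), where \(M\) is the inverse submonoid of \(I_{n+k}\) generated by \(x := [1, \ldots, n] \cup (n+1, \ldots, n+k)\), by sending the generator of \(S_{n, k}\) to \(x\); verify the two defining relations hold in \(M\); and then establish injectivity by identifying a normal form for elements of \(S_{n, k}\) and showing that distinct normal forms map to distinct partial permutations in \(I_{n+k}\). The moreover part will then follow by exhibiting two invariants of \(S_{n, k}\) that independently recover \(n\) and \(k\).

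For well-definedness, the key observation is that the chain \([1, \ldots, n]\) is nilpotent of index \(n\): its \(j\)-th power has domain \(\{1, \ldots, n-j\}\) for \(1 \leq j \leq n-1\) and is empty for \(j \geq n\), while the cycle \((n+1, \ldots, n+k)\) has order \(k\) on the disjoint set \(\{n+1, \ldots, n+k\}\). Hence for every \(j \geq n\), \(x^j\) coincides with the \(j\)-th power of the cycle, so \(x^n x^{-n}\) and \(x^{n+1}x^{-(n+1)}\) are both the identity on \(\{n+1, \ldots, n+k\}\), and since the cycle has order \(k\) the product \(x^n x^{-n} \cdot x^k\) equals this identity too. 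Both relations thus hold in \(M\), and \(\phi\) is a well-defined surjection.

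For injectivity, the plan is to put every element of \(S_{n, k}\) into a canonical form \(x^a e x^{-b}\) with \(a, b \geq 0\) and \(e\) an idempotent, using commutativity of idempotents and standard monogenic inverse monoid manipulations. The first relation collapses every \(x^i x^{-i}\) with \(i > n\) into \(x^n x^{-n}\), so \(e\) lies in the totally ordered chain \(\{x^i x^{-i} : 0 \leq i \leq n\}\); and the second relation reduces exponents exceeding \(n\) modulo \(k\). Verifying that distinct normal forms map to distinct partial permutations is then transparent: since \(x\) acts independently on the chain and on the cycle, the action of a normal form on \(\{1, \ldots, n\}\) encodes \(a\), \(b\), and the idempotent \(e\) when the indices are below \(n\), while its action on \(\{n+1, \ldots, n+k\}\) records a residue modulo \(k\); together these pin down the normal form.

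For the moreover part, observe that the maximum group image of \(S_{n, k}\) is obtained by collapsing all idempotents, which turns the two relations into \(1 = 1\) and \(x^k = 1\); hence this image is \(\Z / k \Z\), so \(k\) is an isomorphism invariant. Moreover, the length of the longest strictly descending chain of idempotents \(1 > xx^{-1} > x^2 x^{-2} > \cdots\) is \(n\): the first relation forces equality from index \(n\) onwards, while \(\phi\) itself certifies that the chain genuinely decreases up to index \(n\) (the domain of \(\phi(x^{i} x^{-i})\) strictly contains that of \(\phi(x^{i+1}x^{-(i+1)})\) for each \(i < n\)). Thus \(n\) is also an invariant. The main obstacle will be the rigorous derivation of the normal form: while the reduction to \(x^a e x^{-b}\) is routine, ruling out additional identifications forced by the presentation is cleanest when done simultaneously with the faithful action on \(I_{n+k}\), so the normal form and the injectivity argument have to be interleaved rather than treated in sequence.
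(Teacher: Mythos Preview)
The paper does not prove this theorem; it is quoted as Theorem~10 of \cite{elliott2023counting}, and the normal-form statement it relies on (Lemma~7.3) is likewise imported from that source and from \cite{Dyadchenko1984}. So there is no in-paper argument to compare against.

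Your verification that the two relations hold for \(x\in I_{n+k}\) is correct, and your invariants for the ``moreover'' clause are the right ones: the maximum group image is \(\Z/k\Z\), and the height of the idempotent semilattice recovers \(n\) (though you still owe the argument that no longer idempotent chain exists, not merely that one of length \(n\) does).

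The genuine gap is the normal form. You claim every element of \(S_{n,k}\) can be written as \(x^{a}\,e\,x^{-b}\) with \(a,b\ge 0\) and \(e=x^{i}x^{-i}\). But \(x^{a}x^{i}x^{-i}x^{-b}=x^{a+i}x^{-(i+b)}\), so your list collapses to \(\{x^{p}x^{-q}:p,q\ge 0\}\). This does not exhaust \(S_{n,k}\) once \(n\ge 2\): the idempotent \(x^{-1}x\) is missing. Indeed, on the chain \(\{1,\dots,n\}\) the element \(x^{p}x^{-q}\) acts as \(j\mapsto j+p-q\) on \(\{\max(1,q-p+1),\dots,n-p\}\), so the only idempotents of that shape (those with \(p=q\)) have chain-domain \(\{1,\dots,n-p\}\), never \(\{2,\dots,n\}=\dom(x^{-1}x)\cap\{1,\dots,n\}\). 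The correct normal form, recorded in Lemma~7.3, is \(x^{-a}x^{b}x^{-b}x^{c}\) with \(a,c\le b\): a word in \(x,x^{-1}\) is a walk on \(\Z\), determined by its leftmost point, rightmost point, and endpoint---three independent parameters. Your form records only two of them (rightmost and endpoint), which is why it fails. Once you use the three-parameter form, the injectivity check against \(I_{n+k}\) goes through exactly as your plan anticipates.
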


\begin{lem}
  \label{lem:monogenic-idempotents}
  Let $n\geq 0$ and $k\geq 1$. Then 
  every element of \(S_{n, k}\) can be uniquely expressed in the form \(x^{-a}x^bx^{-b}x^c\) where \(a, c\leq b<n\) or \(x^nx^{-n}x^a\) where \(0\leq a<k\); and 
   \(
    E(S_{n, k}) =  \{x^{-a}x^bx^{-b}x^a \mid a\leq b < n\}
    \cup \{x^nx^{-n}\}.
  \)
\end{lem}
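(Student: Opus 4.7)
The approach is to work in the concrete realization of $S_{n,k}$ afforded by \cref{thm-isomorphism}, namely as the inverse submonoid of $I_{n+k}$ generated by $x = [1, 2, \ldots, n] \cup (n+1, n+2, \ldots, n+k)$, and argue by direct computation. Expanding out the product, a straightforward calculation shows that for $a, c \leq b < n$ the element $x^{-a}x^bx^{-b}x^c$ has chain domain $\{a+1, \ldots, n-b+a\}$, acts as $i \mapsto i + (c-a)$ on that set, and acts as $x^{c-a}$ on $\{n+1, \ldots, n+k\}$. Similarly, for $0 \leq a < k$, the element $x^nx^{-n}x^a$ has empty chain part and acts as $x^a$ on $\{n+1, \ldots, n+k\}$. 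Uniqueness is then immediate: the minimum $a+1$ of the chain domain determines $a$, its cardinality $n-b$ determines $b$, and the shift determines $c$; the two families of normal forms are distinguished by whether the chain part is non-empty. When $n=0$ the first family is vacuous and only the second is needed.

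For existence, I would argue by induction on the length of a word $s_1s_2\cdots s_m$ in $x,x^{-1}$ representing a given element. The base case is the identity, which equals $x^{-0}x^0x^{-0}x^0$ when $n \geq 1$ and $x^nx^{-n}x^0 = 1$ when $n = 0$. For the inductive step, suppose $g$ is in normal form; it then suffices to show that $gx$ and $gx^{-1}$ are also in normal form. If $g = x^{-a}x^bx^{-b}x^c$ lies in the first family, a short case analysis based on the formulas above gives $gx = x^{-a}x^bx^{-b}x^{c+1}$ when $c < b$; $gx = x^{-a}x^{b+1}x^{-(b+1)}x^{b+1}$ when $c = b$ and $b+1 < n$ (the chain domain shrinks by one and $b$ must be enlarged); and $gx = x^nx^{-n}x^{(n-a)\bmod k}$ when $c = b = n-1$ (the chain is exhausted and only the cycle part $x^{c-a+1}$ survives). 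The treatment of $gx^{-1}$ is symmetric, and the case $g = x^nx^{-n}x^a$ is immediate because such $g$ has no chain part.

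Finally, to identify $E(S_{n,k})$, observe that a partial bijection is an idempotent iff it is the identity on its domain. For $g = x^{-a}x^bx^{-b}x^c$ the chain action $i \mapsto i + (c-a)$ is the identity iff $c = a$; in that case the cycle action $x^0$ is automatically the identity, giving exactly the set $\{x^{-a}x^bx^{-b}x^a : a \leq b < n\}$. For $g = x^nx^{-n}x^a$ the cycle action $x^a$ is the identity iff $a = 0$, leaving only $x^nx^{-n}$. The main obstacle is the combinatorial case analysis in the inductive step, particularly the boundary cases $c = b$ (where $b$ must be increased by one) and $b = n-1$ (where the chain is exhausted and the element passes into the second family of normal forms).
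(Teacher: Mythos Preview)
Your proof is correct and follows the same strategy as the paper: both work in the concrete realization inside $I_{n+k}$ given by \cref{thm-isomorphism}, compute the partial bijection represented by each normal form explicitly, and read off uniqueness and the description of idempotents from that computation. The only difference is that the paper outsources the existence of normal forms to \cite{elliott2023counting} and \cite{Dyadchenko1984}, whereas you supply a self-contained inductive argument; your case analysis for $gx$ and $gx^{-1}$ (including the boundary cases $c=b$, $c=0$, and $b=n-1$) is correct.
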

\begin{proof}
By \cite[Lemma 6]{elliott2023counting} (or \cite[Proposition 4]{Dyadchenko1984})
the set of words of the form \(x^{-a}x^ {b}x^{-b}x^c\) with \(b < n\) contains representatives for every element of $S_{n,k}$.
   In particular, \(x^{-a}x^bx^{-b}x^c\) when restricted to the set \(\{1, \ldots, n\}\) is the partial permutation
  \(\{(a+1, c+1), (a+2, c+2), \ldots, (a + (n-b), c+(n-b))\}\).
  Hence distinct words of the form  \(x^{-a}x^ {b}x^{-b}x^c\) represent distinct elements of $S_{n, k}$. 
  The remaining elements of the monoid are those in the ideal generated by \(x^nx^{-n}\). Since $S_{n, k}$ and the inverse monoid generated by $x=[1, \ldots, n](n+1, \ldots, n+k)$ coincide (by \cref{thm-isomorphism}), $x ^ {n}x ^ {-n}$ is the identity on $\{n+ 1, \ldots, n + k\}$, and so $x ^ {n} x ^ {-n} x = (n + 1, \ldots, n + k)$ generates a cyclic group of order $k$.

  The claim about idempotents is immediate from \cref{thm-isomorphism}.
\end{proof}

\begin{lem}
    \label{lem:monogenic-size}
    [Theorem 2 in \cite{Preston1986aa}]\label{size}
    If $n\geq 1$ and $k\geq 1$, then 
    \[|S_{n,k}| = \frac{n(n+1)(2n+1)}{6} + k.\]
\end{lem}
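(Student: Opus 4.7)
The plan is to obtain the cardinality by counting normal forms, using the unique expression for elements of $S_{n,k}$ established in \cref{lem:monogenic-idempotents}. That lemma partitions $S_{n,k}$ into the two disjoint collections of elements expressible as $x^{-a}x^bx^{-b}x^c$ with $0\leq a,c\leq b<n$, and $x^nx^{-n}x^a$ with $0\leq a<k$. By uniqueness, $|S_{n,k}|$ equals the sum of the sizes of the indexing sets for these two normal forms.

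First I would count the elements of the first form. For each fixed $b\in\{0,1,\ldots,n-1\}$, the pair $(a,c)$ ranges freely over $\{0,1,\ldots,b\}^2$, contributing $(b+1)^2$ elements. Summing gives
\[\sum_{b=0}^{n-1}(b+1)^2 \;=\; \sum_{j=1}^{n} j^2 \;=\; \frac{n(n+1)(2n+1)}{6},\]
using the standard formula for sums of squares. Then I would count the elements of the second form, where $a$ ranges over $\{0,1,\ldots,k-1\}$, giving exactly $k$ elements. Adding these yields the claimed total.

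The only point needing care is the disjointness of the two families of normal forms, but this is already asserted by the uniqueness part of \cref{lem:monogenic-idempotents}: no element of $S_{n,k}$ admits both an expression of the first kind (which lies in the principal ideal of a partial identity of rank at most $n-b$ on $\{1,\ldots,n\}$ and acts as the empty map on $\{n+1,\ldots,n+k\}$) and one of the second (which acts as a non-zero power of a $k$-cycle on $\{n+1,\ldots,n+k\}$). Since the previous lemma supplies the two normal forms and their uniqueness, there is no real obstacle here; the proof is a short bookkeeping calculation.
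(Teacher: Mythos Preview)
Your argument is correct: counting the normal forms from \cref{lem:monogenic-idempotents} gives $\sum_{b=0}^{n-1}(b+1)^2 + k = n(n+1)(2n+1)/6 + k$, and the uniqueness asserted there handles disjointness. Note that the paper does not supply its own proof of this lemma; it is simply quoted from Preston~\cite{Preston1986aa}. Your derivation from the normal-form lemma is the natural way to recover it within the paper's own framework, and indeed the paper implicitly performs the same count later in the proof of \cref{cor:monogenic-proportion}.

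One small factual slip in your parenthetical remark: the elements $x^{-a}x^bx^{-b}x^c$ with $b<n$ do \emph{not} act as the empty map on $\{n+1,\ldots,n+k\}$; since $x$ restricts to a total permutation (the $k$-cycle) on that set, so does any word in $x$ and $x^{-1}$. The genuine distinction between the two families is on $\{1,\ldots,n\}$, where the first form has nonempty domain (of size $n-b\geq 1$) while $x^nx^{-n}x^a$ is empty there. This does not affect your proof, since you rightly fall back on the uniqueness already established in \cref{lem:monogenic-idempotents}.
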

Note that $S_{0, k} = S_{1, k}$ and so $|S_{0, k}| = k + 1$.

The next theorem is the main result of this section, characterising the idempotent-pure congruences on $S_{n, k}$ when $n\geq 0$ and $k\geq 1$.

\begin{theorem}
\label{thm:ip-monogenic}
If \(\rho\) is a non-trivial congruence on \(S_{n, k}\), then $S_{n, k} / \rho \cong S_{n', k'}$,
where \(1\leq n'\leq n\) and \(k'|k\).
 Moreover, $\rho$ is idempotent-pure if and only if \(k'=k\) and \(n\leq k\).
\end{theorem}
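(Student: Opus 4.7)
The plan is first to classify all non-trivial quotients of $S_{n,k}$ as being of the form $S_{n',k'}$ with $n'\leq n$ and $k'\mid k$, and then, working in the realization of $S_{n,k}$ as an inverse submonoid of $I_{n+k}$ afforded by \cref{thm-isomorphism}, to determine precisely which of these quotients correspond to idempotent-pure congruences.

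For the first assertion, since $S_{n,k}$ is monogenic, so is $S_{n,k}/\rho$; by \cref{cor-chain-cycle,thm-isomorphism} it is isomorphic to some $S_{n',k'}$, and the image $y$ of $x$ must satisfy the defining relations of $S_{n,k}$. The relation $x^n x^{-n} = x^{n+1} x^{-(n+1)}$ forces $n\geq n'$, since in $S_{n',k'}$ the idempotents $y^j y^{-j}$ are pairwise distinct for $0\leq j\leq n'$ and stabilize thereafter; the relation $x^n x^{-n} = x^n x^{-n} x^k$ forces $k'\mid k$, since $k'$ is the smallest positive integer with $y^{n'} y^{-n'} y^{k'} = y^{n'} y^{-n'}$.

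For the forward direction of the ``moreover'' clause, the strategy is to exhibit, in each remaining case, a non-idempotent element of $S_{n,k}$ whose image under the quotient map $\pi$ is idempotent. If $k' < k$, pick $c\geq n'$ with $k'\mid c$ and $k\nmid c$ (such $c$ exists because consecutive positive multiples of $k'$ differ by $k'<k$); then $y^c = y^{n'}y^{-n'}$ is idempotent in $S_{n',k'}$, while $x^c$ acts by a nontrivial cyclic shift on the cycle $\{n+1,\ldots,n+k\}$ of $S_{n,k}$. Otherwise $k'=k$; if additionally $n>k$ and $n'\leq k$, then $x^k$ has the chain action $1\mapsto k+1$ while $\pi(x^k)=y^k=y^{n'}y^{-n'}$ is idempotent; and if $n>k$ and $n'>k$, then the element $s=x^{-1}x^{n'}x^{-n'}x^{k+1}$ (which is in canonical form because $1,k+1\leq n'<n$) satisfies $\pi(s)=y^{n'}y^{-n'}$ yet acts on the chain of $S_{n,k}$ by $i\mapsto i+k$ on the nonempty interval $\{n'-k,\ldots,n-k-1\}$, using $n'<n$.

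For the converse, assume $k'=k$ and $n\leq k$, and let $s\in S_{n,k}$ with $\pi(s)$ idempotent in $S_{n',k}$. By \cref{lem:monogenic-idempotents}, $s$ has one of the canonical forms $x^{-a}x^b x^{-b}x^c$ with $a,c\leq b<n$ or $x^n x^{-n} x^a$ with $0\leq a<k$. A direct partial-permutation calculation shows that, on their nontrivial parts, $s$ and $\pi(s)$ act by shifts of size $c-a$ (respectively $a$), so $\pi(s)$ is idempotent precisely when $k$ divides $c-a$ (respectively $a$); the bounds $|c-a|\leq b\leq n-1<k$ (respectively $0\leq a<k$) force $c=a$ (respectively $a=0$), and hence $s$ itself is idempotent. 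The main technical step is the sub-case $b\geq n'$ of the first form, where the chain part of $\pi(s)$ collapses into the cycle of $S_{n',k}$ and the condition $\pi(s)\in E(S_{n',k})$ only pins down the shift modulo $k$; it is precisely the hypothesis $n\leq k$ that forces this modular shift to coincide with the actual integer shift $c-a$.
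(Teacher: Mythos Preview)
Your argument is correct and follows the same strategy as the paper's: classify quotients via the presentation, then test idempotent-purity using the canonical forms from \cref{lem:monogenic-idempotents}. The paper streamlines your forward direction considerably: for $k'<k$ it simply observes that $x^nx^{-n}x^{k'}$ is a non-idempotent mapping to the idempotent $y^{n'}y^{-n'}$, and for $n>k$ with $k'=k$ it handles both of your sub-cases at once with the single witness $x^{n-1}x^{-(n-1)}x^{k}$ (using only that $n'<n$ when $\rho$ is non-trivial), avoiding your case split on $n'\leq k$ versus $n'>k$. One minor slip: the chain domain you give for $s=x^{-1}x^{n'}x^{-n'}x^{k+1}$ should be $\{2,\ldots,n-n'+1\}$ rather than $\{n'-k,\ldots,n-k-1\}$, but this is harmless since all that is needed is that the shift is by $k\neq 0$ on a nonempty interval.
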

\begin{proof}
Every homomorphic image of an inverse monoid is an inverse monoid, and since $S_{n, k}$ is monogenic, $S_{n, k} / \rho$ is monogenic also. In particular, $S_{n, k} / \rho$  is isomorphic to $S_{n', k'}$ , for some $n'\leq n$ and $k' \leq k$ (by \cref{cor-chain-cycle}, and \cref{thm-isomorphism}). Since $S_{n', k'}$ contains a cyclic subgroup of order $k'$ (by \cref{thm-isomorphism}) it follows that $k' | k$. 

We now show that if \(\rho\) is idempotent-pure, then \(k' = k\) and \(n \leq k\). Suppose \(\rho\) is idempotent-pure.
Since $k'| k$ it suffices to show that \(k' \geq k\).
Seeking a contradiction, suppose that \(k'<k\). Then  
\[[x^{n}x^{-n}]_\rho = [x^{n'}x^{-n'}]_\rho =  [x^{n'}x^{-n'}x^{k'}]_\rho = [x^{n}x^{-n}x^{k'}]_\rho.\]
But
\(x^{n}x^{-n}x^{k'} \notin E(S_{n, k})\) by \cref{lem:monogenic-idempotents}, a contradiction. It follows that $k' = k$.

It remains to show that \(n \leq k\). If $n > k = k'$, then since \(n' \leq n\), we can assume \(n' < n\), as otherwise \(\rho\) would be
trivial. So 
\[[x^{n-1}x^{-(n-1)}]_\rho=[x^{n'}x^{-n'}]_\rho =[x^{n'}x^{-n'}x^{k}]_\rho=[x^{n'}x^{-n'}x^{k'}]_\rho= [x^{(n-1)}x^{-(n-1)}x^{k}]_\rho,\]
where \(x^{(n-1)}x^{-(n-1)}x^{k} \notin E(S_{n, k})\) by \cref{lem:monogenic-idempotents}. Thus $\rho$ is not idempotent-pure, a contradiction. Hence $k' = k$ and $n \leq k$ as required.

For the converse, suppose that $k' = k$ and $n \leq k$. Then for all \(0\leq a, c\leq b<n\), \(1 \leq f \leq n\), and \(e, g < \max(n, k)\) 
\begin{align*}
    [x^{-a}x^bx^{-b}x^a]_\rho =[x^{-e}x^fx^{-f}x^g]_\rho 
    &\implies  (b=f  \text{ and } e=g=a) \text{ or } (b, f \geq n' \text{ and } -a+a=-e+g)\\
     &\implies (b=f  \text{ and } e=g=a) \text{ or } (b, f \geq n' \text{ and } e=g)\\
     &\implies e=g.
\end{align*}
Hence \(x^{-e}x^fx^{-f}x^g\) is an idempotent by \cref{lem:monogenic-idempotents}. Also
\[[x^nx^{-n}]_\rho =[x^{-e}x^fx^{-f}x^g]_\rho \implies  f\geq n'  \text{ and } k'|(g-e)\implies   k|(g-e)\implies g-e=0\implies   e=g
\]
so \(x^{-e}x^fx^{-f}x^g\) is again an idempotent, and \(\rho\) is idempotent-pure.
\end{proof}

Next, we use \cref{thm:ip-monogenic} to characterise the \(E\)-disjunctive finite monogenic inverse monoids.

\begin{cor}\label{cor:mon_E_dis}
     A finite monogenic inverse monoid is E-disjunctive if and only if it is isomorphic to \(S_{n, k}\) for some \(k, n\) with  \(n>k\) or \(n=1\). 
\end{cor}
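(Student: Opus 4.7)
My plan is to apply \cref{thm:ip-monogenic} in combination with the classification of finite monogenic inverse monoids as $S_{n,k}$ given by \cref{cor-chain-cycle} and \cref{thm-isomorphism}. The proof then reduces to checking, for each pair $(n,k)$, whether the arithmetic conditions of \cref{thm:ip-monogenic} admit a non-trivial idempotent-pure congruence on $S_{n,k}$.

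For the forward direction I would argue by contrapositive: assuming $n\geq 2$ and $n\leq k$, I would exhibit a non-trivial idempotent-pure congruence on $S_{n,k}$. The natural candidate is the congruence $\rho$ generated by the pair $(x^{n-1}x^{-(n-1)},\, x^n x^{-n})$: adding this relation to the defining presentation of $S_{n,k}$ produces the defining presentation of $S_{n-1,k}$, so $\rho$ is non-trivial and $S_{n,k}/\rho\cong S_{n-1,k}$, giving $n'=n-1\geq 1$ and $k'=k$. Since $n\leq k$, \cref{thm:ip-monogenic} then guarantees that $\rho$ is idempotent-pure, and hence $S_{n,k}$ is not $E$-disjunctive.

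For the reverse direction, let $\rho$ be an arbitrary non-trivial congruence on $S_{n,k}$ and write $S_{n,k}/\rho\cong S_{n',k'}$ with $1\leq n'\leq n$ and $k'\mid k$ as in \cref{thm:ip-monogenic}. If $n>k$, the condition $n\leq k$ required by that theorem fails, so $\rho$ cannot be idempotent-pure. If instead $n=1$, then $n'=1$ is forced by $1\leq n'\leq 1$; if we additionally had $k'=k$, the quotient $S_{n',k'}\cong S_{1,k}$ would have the same cardinality as $S_{n,k}$ by \cref{lem:monogenic-size}, contradicting the non-triviality of $\rho$. Hence $k'\neq k$, so $\rho$ is again not idempotent-pure. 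Either way $S_{n,k}$ is $E$-disjunctive.

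The main point to take care over is the boundary case $n=1$: the argument relies on the explicit range $n'\geq 1$ of \cref{thm:ip-monogenic} to rule out the only otherwise plausible idempotent-pure quotient, and the non-triviality of $\rho$ is extracted from a size comparison using \cref{lem:monogenic-size} rather than from any direct analysis of the congruence itself.
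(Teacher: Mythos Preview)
Your proof is correct and follows essentially the same approach as the paper: both reduce the question to the arithmetic of the pairs $(n',k')$ via \cref{thm:ip-monogenic}. Your forward direction is slightly more explicit than the paper's (which packages the argument into a logical predicate $P(n,k)$ and implicitly uses that every admissible $(n',k')$ arises from some quotient map $S_{n,k}\to S_{n',k'}$), but the mathematical content is the same.
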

\begin{proof}
     By \cref{thm:ip-monogenic}, a congruence $\rho$ on \(S_{n, k}\) is idempotent-pure if and only if \(k'=k\) and \(n\leq k\) (where \(S_{n,k}/\rho \cong S_{n', k'}\)).
     Let \(P(n, k)\) be the statement: for all \((n', k')\neq (n, k)\) with \(1\leq n'\leq n\), \(1\leq k'|k\), either \(k' \neq k\) or \(n > k\).
     So \(S_{n, k}\) is \(E\)-disjunctive if and only if \(P(n, k)\) is true.
We show that \(P(n,k)\) holds if and only if \(n>k\) or \(n=1\). 

\((\Rightarrow)\): Suppose \(P(n, k)\) is true. If \(n=1\), then the proof is complete. Otherwise set \(n'=1<n\) and \(k'=k\). Since \(P(n, k)\) holds, either \(k'\neq k\) or \(n>k\). Hence, since $k'= k$,  \(n>k\), as required.

\((\Leftarrow)\): If \(n>1\), then \(n>k\), and so \(P(n, k)\) holds immediately.
Otherwise if \(n=1\), then for all \(n'\leq n\) and \(k'|k\) with \((n', k')\neq (n, k)\), \(n'=n=1\) and so \(k'\neq k\). Hence in either case \(P(n, k)\) holds.
\end{proof}

We conclude this section by showing that, asymptotically, almost none of the monogenic inverse monoids are $E$-disjunctive.

\begin{cor}
     \label{cor:monogenic-proportion}
     The proportion of isomorphism classes of monogenic inverse monoids of size at most \(m\) which are \(E\)-disjunctive tends to \(0\) as \(m\) tends to infinity. 
\end{cor}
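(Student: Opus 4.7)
The plan is to use the parametrisation of finite monogenic inverse monoids from \cref{thm-isomorphism}, the size formula from \cref{lem:monogenic-size}, and the characterisation from \cref{cor:mon_E_dis}, reducing the statement to an elementary counting problem. By \cref{thm-isomorphism}, the isomorphism classes of finite monogenic inverse monoids are parametrised bijectively by pairs \((n, k)\) with \(n \geq 1\) and \(k \geq 1\) (since \(S_{0,k} = S_{1,k}\)). Writing \(\sigma(n) = \frac{n(n+1)(2n+1)}{6}\), we have \(|S_{n,k}| = \sigma(n) + k\) by \cref{lem:monogenic-size}, and by \cref{cor:mon_E_dis} the monoid \(S_{n,k}\) is \(E\)-disjunctive if and only if \(n = 1\) or \(n > k\). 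Let \(T(m)\) and \(E(m)\) denote the numbers of isomorphism classes, and of \(E\)-disjunctive isomorphism classes, of monogenic inverse monoids of size at most \(m\). The plan is to show that \(T(m) = \Theta(m^{4/3})\) while \(E(m) = \Theta(m)\), which immediately gives \(E(m)/T(m) = \Theta(m^{-1/3}) \to 0\).

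To estimate \(T(m)\), let \(N = N(m)\) be the largest integer with \(\sigma(N) < m\). Since \(\sigma(n) \sim n^3/3\), we have \(N \sim (3m)^{1/3}\). For each \(n \in \{1, \ldots, N\}\) the number of admissible \(k\) is exactly \(m - \sigma(n)\), so
\[T(m) = \sum_{n=1}^{N} \bigl( m - \sigma(n) \bigr) = Nm - \sum_{n=1}^{N} \sigma(n).\]
The closed form \(\sum_{n=1}^{N} \sigma(n) \sim N^4/12\) together with \(Nm \sim 3^{1/3} m^{4/3}\) yields \(T(m) \sim \tfrac{3^{4/3}}{4}\, m^{4/3}\); in particular \(T(m) = \Theta(m^{4/3})\).

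To estimate \(E(m)\), split on whether \(n = 1\) or \(n \geq 2\). The case \(n = 1\) contributes exactly the pairs \((1, k)\) with \(1 + k \leq m\), giving \(m - 1\) isomorphism classes. The case \(n \geq 2\) with \(n > k\) contributes at most \(\sum_{n=2}^{N}(n - 1) = O(N^2) = O(m^{2/3})\) classes, since for each such \(n\) there are at most \(n-1\) valid values of \(k\), and only \(n \leq N\) yield monoids of size at most \(m\). Hence \(E(m) = m - 1 + O(m^{2/3}) = \Theta(m)\), and combining with the previous paragraph gives the required conclusion. The only ingredient that is not entirely immediate is the routine asymptotic estimation of the two sums, but this is purely computational; I foresee no conceptual obstacle.
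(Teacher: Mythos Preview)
Your proof is correct and follows essentially the same approach as the paper: both use the parametrisation by pairs \((n,k)\), the size formula, and the characterisation of \(E\)-disjunctivity to reduce to an elementary counting estimate. Your execution is in fact cleaner and more precise---you obtain the explicit rate \(E(m)/T(m) = \Theta(m^{-1/3})\), whereas the paper only shows the ratio tends to zero via cruder bounds (a lower bound of \(jm\) on the total for each fixed \(j\), and an upper bound of \(7m\) on the \(E\)-disjunctive count).
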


\begin{proof}
%
Let $m\in \Z_{\geq 1}$ be given.
By \cref{thm-isomorphism}, the number of monogenic inverse monoids up to isomorphism with size at most $m$ equals the number of $S_{n, k}$ such that $|S_{n, k}|\leq m$.
Similarly, by \cref{cor:mon_E_dis}, the number of $E$-disjunctive monogenic inverse monoids up to isomorphism with size at most $m$ equals 
the number of $S_{n, k}$ such that $|S_{n, k}|\leq m$ where $n > k$ or $n = 1$. In particular, it suffices to find the proportion of pairs $\{(n, k) \in \mathbb{Z}_{\geq 0} \times \mathbb{Z}_{\geq 1} \mid |S_{n, k}| \leq m\}$ such that $n > k$ or $n = 1$.
By \cref{lem:monogenic-size},
\[|S_{n, k}| = \frac{n(n+1)(2n+1)}{6} + k \leq m\quad \text{if and only if}\quad 
k\leq m- \sum_{i=0}^{n} i^2 = m -  \frac{n(n+1)(2n+1)}{6}.\]
For all \(j\in \Z_{\geq 1}\), there exists \(m_j\in \Z_{\geq 1}\) such that \(m_j\geq j\) and the number of $(n, k)$, such that \(|S_{n,k}| \leq m_j\) is greater than \(jm_j\). For example, if $j = 6$, then for \(m \geq 105\), the  
number of $(n, k)$ with $|S_{n, k}|\leq m$ is
\begin{align*}
    \sum_{n=1}^\infty\max\left(m- \sum_{i=0}^{n} i^2, 0\right) & \geq 
    \sum_{n =1}^ 6 \left( m- \sum_{i=0}^{n} i^2 \right)\\
    & = m + (m-1) + (m-5)+(m-14)+(m-30)+(m-55) \\
    & = 7m-105\geq 6m.
\end{align*}

We next find an upper bound for the number of $(n, k)$ such that $S_{n,k}$ is $E$-disjunctive and $|S_{n, k}| \leq m$. 
It suffices to give an upper bound for the number of pairs in $\{(n, k) \in \mathbb{Z}_{\geq 0} \times \mathbb{Z}_{\geq 1} \mid |S_{n, k}| \leq m\}$ such that $n > k$ or $n = 1$:

\begin{align*}
& \phantom{=} \sum_{n = 1} ^ {\infty} |\{ k\in\Z_{\geq 1} \mid (n > k \text{ or } n = 1) \text{ and } |S_{n, k}|\leq m \}| \\
& =
 |\{ k\in\Z_{\geq 1} \mid |S_{1, k}|\leq m \}|+\sum_{n = 2} ^ {\infty} |\{ k\in\Z_{\geq 1} \mid n > k \text{ and } |S_{n, k}|\leq m \}|
 \\
& =
 |\{ k\in\Z_{\geq 1} \mid k+1\leq m \}| +\sum_{n = 2} ^ {\infty} |\{ k\in\Z_{\geq 1} \mid n > k \text{ and } n(n + 1) (2n + 1)/6 +k \leq m \}|
\\
& =
 m - 1+\sum_{n = 2} ^ {\infty} |\{ k\in\Z_{\geq 1} \mid n > k \text{ and } n(n + 1) (2n + 1)/6 +k \leq m \}|
 \\
& =
 m - 1+\sum_{n = 2} ^ {\infty} |\{ k\in\Z_{\geq 1} \mid n > k \text{ and } k \leq m - n(n + 1) (2n + 1)/6 \}|
 \\
 & \leq
 m - 1+\sum_{n = 2} ^ {\infty} \min( \max(m - n(n + 1) (2n + 1)/6, 0), n - 1)\\
   & \leq
 m - 1+\sum_{n = 2} ^ {\infty} \min(\max(m - n^3/6, 0), n-1)\\
  & \leq
 m - 1+\sum_{n = 2} ^ {\lfloor \sqrt[3]{6m} \rfloor} \min(m - n^3/6, n-1)\\
 & \leq
 m - 1+\sum_{n = 2} ^ {\lfloor \sqrt[3]{6m} \rfloor}  n-1\\
  & \leq
 m - 1-\sqrt[3]{6m}+\frac{\sqrt[3]{6m}(\sqrt[3]{6m}+1)}{2}\\
   & \leq
 m +\sqrt[3]{6m}\sqrt[3]{6m}+1\\
  & \leq
 7m.
\end{align*}
Thus 
\begin{align*}\lim_{m\to \infty} \frac{\text{\# monogenic E-disjunctive inverse semigroups of size at most m}}{\text{\# monogenic inverse semigroups of size at most m}}
& \leq 
\lim_{n\to \infty}\frac{7m_n}{nm_n}\\
& =\lim_{n\to \infty}\frac{7}{n} = 0.\qedhere
\end{align*}
\end{proof}

\part{A structure theory for $E$-disjunctive inverse semigroups}

In this part of the paper we consider various structural properties of
$E$-disjunctive semigroups. In \cref{sec:ratio-idempotents} we find a bound on
the ratio of idempotent to non-idempotent elements in an $E$-disjunctive
semigroup; in \cref{sec:maximage} we consider the maximum $E$-disjunctive
homomorphic images of an inverse semigroup; in \cref{sec:preactions} we define a
notion we refer to as preactions which is used extensively in the final section
\cref{sec:mcealister}; where we prove that every inverse semigroup can be
defined in terms of a semilattice and an $E$-disjunctive inverse semigroup. 

\section{Ratio of idempotents to non-idempotents}
\label{sec:ratio-idempotents}
Roughly speaking semilattices are as far from being $E$-disjunctive as
possible. More specifically, every $E$-disjunctive homomorphic image of a
semilattice is trivial. In this section, we precisely formalise this notion by
showing that inverse semigroups with too many idempotents are not
\(E\)-disjunctive. In particular, we will prove the following theorem.

\begin{theorem}
  \label{idem_bound_thm}
  Let \(S\) be an \(E\)-disjunctive inverse semigroup and \(\kappa = |S\setminus E(S)|\). Then \(|S| \leq 2^\kappa + \kappa\).
\end{theorem}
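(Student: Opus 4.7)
The bound would reduce to proving $|E(S)| \leq 2^{\kappa}$, since then $|S| = |E(S)| + |S \setminus E(S)| \leq 2^{\kappa} + \kappa$. My plan is therefore to construct an injection $\phi \colon E(S) \hookrightarrow \mathcal{P}(S \setminus E(S))$ and invoke $|\mathcal{P}(S \setminus E(S))| = 2^{\kappa}$.

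A natural candidate is the map
\[
  \phi(e) \;=\; \{s \in S \setminus E(S) : s^{-1}s \leq e\},
\]
consisting of the non-idempotents whose ``range'' idempotent sits below $e$ in $E(S)$. Equivalently, $s \in \phi(e)$ iff $ses^{-1} = ss^{-1}$, a reformulation that avoids the natural partial order. To show $\phi$ is injective, I would define an equivalence $\sim$ on $E(S)$ by $e \sim f \iff \phi(e) = \phi(f)$ and prove that $\sim$ is a \emph{normal congruence} on $E(S)$: a semilattice congruence that is additionally invariant under $S$-conjugation $e \mapsto r^{-1}er$. By the kernel--trace correspondence used in \cref{lem-idempotents}, any such normal congruence is the trace of some idempotent-pure congruence on $S$, so the $E$-disjunctivity of $S$ would force $\sim = \Delta_{E(S)}$, and hence $\phi$ would be injective.

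That $\sim$ is a semilattice congruence is immediate from the equivalence $s^{-1}s \leq eg \iff s^{-1}s \leq e \text{ and } s^{-1}s \leq g$. The main technical step, which I expect to be the chief obstacle, is the $S$-invariance: $e \sim f$ implies $r^{-1}er \sim r^{-1}fr$ for every $r \in S$. The plan here is to first establish the identity
\[
  t^{-1}t \leq r^{-1}er \iff r(t^{-1}t)r^{-1} \leq e,
\]
obtained by conjugating the relation $(r^{-1}er)(t^{-1}t) = t^{-1}t$ by $r$ and simplifying via $rr^{-1}r = r$ together with the commutativity of idempotents. Writing $r(t^{-1}t)r^{-1} = (rt^{-1})(rt^{-1})^{-1}$ and passing to the equivalent dual formulation of $\sim$ in terms of $ss^{-1}$ (coming from the symmetry $s \mapsto s^{-1}$ on $S \setminus E(S)$), the required equivalence then reduces to applying $\sim$ to the element $rt^{-1}$ whenever it is non-idempotent. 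The delicate case is when $rt^{-1}$ happens itself to be an idempotent; resolving this will likely require either selecting a substitute non-idempotent in the same $\mathcal{L}$-class as $t$ to witness the same range projection, or arguing directly from the strong identity $tr^{-1} = rt^{-1}$ forced by $rt^{-1}$ being self-inverse.
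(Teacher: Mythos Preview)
Your approach differs from the paper's and, if completed, would give a more uniform argument: the paper treats the finite and infinite cases separately, the finite case by an induction on a height function applied to a map essentially equivalent to your $\phi$ (indexed by $N(S)=\{uu^{-1}:u\notin E(S)\}$ rather than by $S\setminus E(S)$, which yields the same equivalence on $E(S)$), and the infinite case by an entirely different ``syntactic readout'' argument. Your route avoids this case split.

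However, two genuine gaps remain.

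First, the appeal to \cref{lem-idempotents} is misplaced. That lemma uses the kernel--trace correspondence in the direction ``a congruence with kernel $E(S)$ and trivial trace is trivial''; you need the opposite direction, namely that $(E(S),\sim)$ is a \emph{congruence pair} in the sense of \cite[Section~5.3]{Howie}, and that is not automatic for an arbitrary normal congruence. The non-trivial condition is: if $ae\in E(S)$ and $e\sim a^{-1}a$, then $a\in E(S)$. This does hold for your particular $\sim$ (if $a\notin E(S)$ then $a\in\phi(a^{-1}a)=\phi(e)$, so $a^{-1}a\leq e$ and hence $a=ae\in E(S)$, a contradiction), but it must be checked rather than asserted.

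Second, the ``delicate case'' where $rt^{-1}$ is idempotent is left open, and your proposed workarounds are vague. In fact it resolves cleanly: from $t^{-1}t\leq r^{-1}r$ one has $tr^{-1}r=t$, and since $rt^{-1}$ is idempotent it equals its inverse $tr^{-1}$, so $rt^{-1}r=tr^{-1}r=t$. Computing $tt^{-1}$ from this gives $tt^{-1}=rt^{-1}r\cdot r^{-1}tr^{-1}=rt^{-1}\cdot rr^{-1}=rt^{-1}$. Hence $r(t^{-1}t)r^{-1}=rt^{-1}=tt^{-1}$, which is of the form $uu^{-1}$ for the non-idempotent $u=t$, and the argument proceeds exactly as in the non-delicate case.
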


If $S$ is a finite inverse semigroup, then we define $\mathbf{h}\colon S\to \N_{\geq
0}$ so that $(s)\mathbf{h}$ is the largest value in $\N_{\geq0}$ such that
there is a chain with maximum element $s$ of this length in the natural partial order of $S$.

In the following we require the notion of Green's relations on a semigroup $S$, which we briefly recall; see \cite[Chapter 2]{Howie} for more details.
Green's $\mathscr{R}$-relation is the equivalence relation on $S$ consisting of those pairs $(a, b)\in S\times S$ that generate the same principal right ideal of $S$, i.e. $a S ^ 1 = \{as \mid s\in S\} \cup \{a\} = b S ^ 1$. Green's $\mathscr{L}$-relation is the left ideal dual of $\mathscr{R}$, and Green's $\mathscr{D}$- is defined as $\mathscr{D} = \mathscr{L} \circ \mathscr{R}$.
If $a, b\in S$ and $a S^ 1\subseteq b S^{1}$, then we write $a \leq_{\mathscr{R}} b$, and likewise for $\leq_{\mathscr{L}}$.

\begin{lem}\label{lem-the-little-d}
  If $S$ is a finite inverse semigroup and $s, t \in S$ are such that
  $s\leq_{\mathscr{R}}t$ or $s\leq_{\mathscr{L}}t$, then $(s)\mathbf{h} \leq (t)\mathbf{h}$.
\end{lem}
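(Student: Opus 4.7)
The plan is to prove the $\mathscr{R}$-case by constructing an order-embedding $\phi\colon \{x\in S : x\leq s\} \to \{x\in S : x\leq t\}$ with respect to the natural partial order. Once such a $\phi$ is in hand, a chain $s_0<s_1<\cdots<s_n=s$ witnessing $(s)\mathbf{h}\geq n$ maps to a chain $\phi(s_0)<\phi(s_1)<\cdots<\phi(s_n)\leq t$, which either ends at $t$ or can be extended by appending $t$ at the top; either way $(t)\mathbf{h}\geq n$. The $\mathscr{L}$-case then follows by the symmetric construction using $\psi(x) = ts^{-1}x$, so there is no real loss in focusing on the $\mathscr{R}$-case.

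The required algebraic input is the well-known characterization that, in an inverse semigroup, $s\leq_{\mathscr{R}} t$ is equivalent to $ss^{-1}\leq tt^{-1}$. The forward direction uses $s = tu$ for some $u\in S^1$ to compute $ss^{-1} = tuu^{-1}t^{-1}\leq tt^{-1}$. The reverse uses commutativity of idempotents to get
\[
s = ss^{-1}s = (ss^{-1})(tt^{-1})s = tt^{-1}s.
\]
Taking inverses of $s = tt^{-1}s$ yields $s^{-1} = s^{-1}tt^{-1}$, and hence
\[
s^{-1}tt^{-1}s = s^{-1}s.
\]
This last identity is the key computational step in what follows.

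Now define $\phi(x) = xs^{-1}t$ for all $x\leq s$. Order-preservation is immediate: if $x\leq y$ then $x = ey$ for some $e\in E(S)$, whence $\phi(x) = e\phi(y)\leq \phi(y)$. Injectivity follows by right-multiplying by $t^{-1}s$: any $x\leq s$ satisfies $x = xs^{-1}s$, so the identity above gives
\[
\phi(x)\cdot t^{-1}s = xs^{-1}tt^{-1}s = xs^{-1}s = x.
\]
Finally $\phi(s) = ss^{-1}t\leq t$ because $ss^{-1}\in E(S)$, so $\phi$ indeed lands in the order ideal below $t$, completing the argument. The main obstacle is locating the correct map and verifying the identity $s^{-1}tt^{-1}s = s^{-1}s$; once that is available, strict order-preservation (as opposed to mere monotonicity) drops out for free, and the $\mathscr{L}$-case is handled identically using the mirror map $\psi(x) = ts^{-1}x$ together with the dual identity $st^{-1}ts^{-1} = ss^{-1}$.
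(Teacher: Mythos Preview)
Your proof is correct and uses essentially the same idea as the paper: transport a chain in the natural partial order by right-multiplying by a fixed element of $S$. The paper picks $s'$ with $ts'=s$ and multiplies a chain by $s'$, whereas you use the specific element $s^{-1}t$ and are more explicit than the paper about why the transported chain remains strict---your injectivity argument via the one-sided inverse $t^{-1}s$ is exactly what is needed, and the paper's proof glosses over this point.
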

\begin{proof}
  We prove the lemma in the case that $s\leq_{\mathscr{R}}t$, the proof in the
  other case is similar. Let $s'\in S$ be such that $ts' = s$. Suppose that $n
  = (t)\mathbf{h}$ and suppose that $t_1 \coloneqq  t, t_2, \ldots, t_{n} \in S$ are
  such that $t_i > t_{i + 1}$ for all $i$. We set $s_i = t_is'$ for all $i$.
  Since $t_{i + 1} \leq t_i$ implies $t_{i + 1} = (t_{i+ 1}t_{i+ 1} ^ {-1})t_i$ (by \cite[Proposition 5.2.1]{Howie}) it follows that
  \[
    s_{i + 1} = t_{i + 1}s' = (t_{i+ 1}t_{i+ 1} ^ {-1})t_is' = (t_{i + 1}t_{i +
    1} ^ {-1})s_i \leq s_i
  \]
  and so $(s)\mathbf{h} \geq (t)\mathbf{h}$.
\end{proof}

\begin{lem}
  Let \(S\) be a finite inverse semigroup. If \(s, \ t \in S\) are such
  that \(s\mathscr{D}t\), then \((s)\mathbf{h} = (t)\mathbf{h}\).
\end{lem}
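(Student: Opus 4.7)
The plan is to deduce this directly from the preceding \cref{lem-the-little-d}, using the standard factorisation $\mathscr{D} = \mathscr{R} \circ \mathscr{L}$ of Green's $\mathscr{D}$-relation.

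First I would observe that if $s \mathrel{\mathscr{R}} t$, then $s S^1 = tS^1$, so both $s \leq_{\mathscr{R}} t$ and $t \leq_{\mathscr{R}} s$ hold; applying \cref{lem-the-little-d} in each direction yields $(s)\mathbf{h} = (t)\mathbf{h}$. The analogous argument with $\mathscr{L}$ in place of $\mathscr{R}$ gives the same conclusion whenever $s \mathrel{\mathscr{L}} t$.

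Now assume $s \mathrel{\mathscr{D}} t$. Since $\mathscr{D} = \mathscr{R} \circ \mathscr{L}$, there exists $u \in S$ with $s \mathrel{\mathscr{R}} u$ and $u \mathrel{\mathscr{L}} t$. By the observations above, $(s)\mathbf{h} = (u)\mathbf{h}$ and $(u)\mathbf{h} = (t)\mathbf{h}$, whence $(s)\mathbf{h} = (t)\mathbf{h}$.

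There is no real obstacle here: the previous lemma already does the genuine work (transporting a chain in the natural partial order from $t$ to $s$ via right multiplication by an element witnessing $s \leq_{\mathscr{R}} t$). The present statement is essentially a symmetry observation combined with the definition of $\mathscr{D}$, so the proof should be only a few lines.
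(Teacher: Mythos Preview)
Your proposal is correct and follows exactly the paper's approach: both invoke \cref{lem-the-little-d} together with the factorisation $\mathscr{D} = \mathscr{R} \circ \mathscr{L}$ to find $u$ with $s \mathrel{\mathscr{R}} u \mathrel{\mathscr{L}} t$ and conclude $(s)\mathbf{h} = (u)\mathbf{h} = (t)\mathbf{h}$. The only difference is that you spell out explicitly why $s \mathrel{\mathscr{R}} u$ gives both inequalities needed to apply \cref{lem-the-little-d} in each direction, whereas the paper leaves this implicit.
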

\begin{proof}
  Since $s\mathscr{D}t$, there exists $u\in S$ such that $s\mathscr{R} u
  \mathscr{L} t$ and so $(s)\mathbf{h} = (u)\mathbf{h} = (t)\mathbf{h}$, by
  \cref{lem-the-little-d}. 
\end{proof}

\begin{lem}\label{fall_to_lower_level_lem}
Let \(S\) be a finite inverse semigroup and let $e, f\in S$ be distinct idempotents. If \((f)\mathbf{h} \leq (e)\mathbf{h}\), then \((ef)\mathbf{h} < (e)\mathbf{h}\).
\end{lem}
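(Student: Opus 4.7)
The plan is to observe that $ef$ is an idempotent (since idempotents in an inverse semigroup commute), and that $ef \leq e$ in the natural partial order (since $ef = (ef)e$). So the inequality $(ef)\mathbf{h} \leq (e)\mathbf{h}$ is automatic — the real content is to rule out equality, which by the definition of $\mathbf{h}$ (counting strict descents below the maximum) amounts to showing $ef \neq e$.

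I would argue by contradiction: suppose $ef = e$. Then by standard facts about the natural partial order on the semilattice of idempotents, $e = ef$ is equivalent to $e \leq f$. Since $e$ and $f$ are assumed distinct, this means $e < f$ strictly. Now any maximal chain witnessing $(e)\mathbf{h}$, say $e = e_0 > e_1 > \cdots > e_{(e)\mathbf{h}}$, can be prepended with $f$ to produce a strict chain $f > e = e_0 > e_1 > \cdots > e_{(e)\mathbf{h}}$ with maximum $f$. Thus $(f)\mathbf{h} \geq (e)\mathbf{h} + 1$, contradicting the assumption $(f)\mathbf{h} \leq (e)\mathbf{h}$.

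Therefore $ef \neq e$, so $ef < e$ strictly. Extending any maximal chain ending at $ef$ by one additional step to $e$ shows that $(e)\mathbf{h} \geq (ef)\mathbf{h} + 1$, which is the desired strict inequality $(ef)\mathbf{h} < (e)\mathbf{h}$.

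There is no real obstacle here; the only thing to be careful about is that the hypothesis $(f)\mathbf{h} \leq (e)\mathbf{h}$ is used exactly once, to exclude the case $ef = e$, while the case $ef = f$ (which would force $f < e$ strictly) and the generic case $ef \notin \{e,f\}$ both give $ef < e$ directly from $ef \leq e$ and $ef \neq e$.
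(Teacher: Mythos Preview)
Your proof is correct and follows essentially the same idea as the paper's: both arguments reduce to showing $ef < e$ strictly (using the hypothesis $(f)\mathbf{h} \leq (e)\mathbf{h}$ only to rule out $e \leq f$) and then extend a maximal chain below $ef$ by one step. Your presentation is slightly more streamlined, replacing the paper's case split on comparability of $e$ and $f$ with a single contradiction argument, but the mathematical content is the same.
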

\begin{proof}
  If \(e\) and \(f\) are incomparable, then \(ef<e\) or \(ef <f\). In either case, \((e)\mathbf{h} \geq (f)\mathbf{h} > (ef)\mathbf{h}\).
  Otherwise, \(f < e\), and so \(ef < e\) and \((ef)\mathbf{h} < (e)\mathbf{h}\).
\end{proof}

If \(S\) is a finite inverse semigroup, then we define \(N(S)\) to be the set of idempotents in \(e\in S\) such that there exists a non-idempotent $u\in S$ such that $uu ^ {-1} = e$.
We also define \[\phi_{S} \colon E(S) \to \mathcal P(N(S))\]
by
\[(f)\phi_{S} = \makeset{e\in N(S)}{ \(e\leq f\)}.\]

\begin{lem}
\label{interesting_hom_lem}
If \(S\) is a finite inverse semigroup, then \(\phi_{S} \colon E(S) \to \mathcal P(N(S))\) 
defined by 
\[(f)\phi_{S} = \set{e\in N(S)}{e\leq f}\]
is a homomorphism where the operation on $\mathcal{P}(N(S))$ is $\cap$.
\end{lem}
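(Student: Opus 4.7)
The plan is to verify directly that $(fg)\phi_S = (f)\phi_S \cap (g)\phi_S$ for all $f, g \in E(S)$. Note that $fg \in E(S)$ since $E(S)$ is a subsemilattice, so the left-hand side is well-defined.

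For the inclusion $(fg)\phi_S \subseteq (f)\phi_S \cap (g)\phi_S$, I would observe that in the natural partial order on the semilattice $E(S)$ we have $fg \leq f$ and $fg \leq g$; hence any $e \in N(S)$ with $e \leq fg$ also satisfies $e \leq f$ and $e \leq g$ by transitivity, placing it in both $(f)\phi_S$ and $(g)\phi_S$.

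For the reverse inclusion, suppose $e \in N(S)$ with $e \leq f$ and $e \leq g$. Because $e$, $f$, and $g$ are all idempotents, $e \leq f$ is equivalent to $e = ef = fe$, and similarly $e = eg = ge$. Then
\[
e(fg) = (ef)g = eg = e,
\]
which shows $e \leq fg$, and so $e \in (fg)\phi_S$.

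The argument is essentially a one-line semilattice computation; the only subtlety to flag is that membership in $N(S)$ is preserved trivially (it does not depend on $f$ or $g$), so restricting the codomain from $\mathcal{P}(E(S))$ to $\mathcal{P}(N(S))$ causes no issue. There is no real obstacle — this is a straightforward verification that the principal-downset-intersected-with-$N(S)$ map is a meet-semilattice homomorphism.
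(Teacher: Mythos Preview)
Your proof is correct and essentially identical to the paper's: both verify $(fg)\phi_S = (f)\phi_S \cap (g)\phi_S$ by using that $e \leq f$ is equivalent to $e = ef$ for idempotents, so that $fg$ is the meet in $E(S)$. The only cosmetic difference is that the paper writes the argument as a single chain of equivalences, while you split it into two inclusions.
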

\begin{proof}
  Let \(e, \ f\in E(S)\), and \(g\in N(S)\). Then
  \begin{align*}
      g\in (e)\phi_{S}\cap (f)\phi_{S} &\iff g\leq e\text{ and }g\leq f\\
      &\iff ge =g=gf\\
      &\iff gef=g\\
      &\iff g\leq ef\\
      & \iff g\in (ef)\phi_{S}.\qedhere
  \end{align*}
\end{proof}

If $I$ is an ideal of an inverse semigroup $S$, then the natural partial order on $I$ is just the intersection of the natural partial order of $S$ with $I\times I$.

\begin{lem}\label{semi_embed_lem}
If \(S\) is a finite \(E\)-disjunctive inverse semigroup, then \(\phi_{S}\colon E(S) \to \mathcal{P}(N(S))\) is an embedding.
\end{lem}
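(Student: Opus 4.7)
The strategy is to recognize $\tau := \ker\phi_S = \{(e,f) \in E(S)^2 : (e)\phi_S = (f)\phi_S\}$, which is a congruence on the semilattice $E(S)$ by the preceding lemma, and to show that $\tau$ is a \emph{normal} semilattice congruence in the sense of the kernel--trace correspondence for inverse semigroup congruences (\cite[Theorem 5.3.3]{Howie}). Any normal semilattice congruence on $E(S)$ is the trace of a unique idempotent-pure congruence on $S$ with kernel $E(S)$. Once normality is established, $E$-disjunctiveness forces that congruence to be $\Delta_S$; hence $\tau = \Delta_{E(S)}$ and $\phi_S$ is injective, which is exactly the claim.

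To verify normality I fix $(e,f) \in \tau$ and $s \in S$, and show $(s^{-1}es)\phi_S = (s^{-1}fs)\phi_S$. By symmetry, it suffices to take $g \in N(S)$ with $g \leq s^{-1}es$ and prove $g \leq s^{-1}fs$. Since $g \leq s^{-1}es \leq s^{-1}s$, using that idempotents commute in an inverse semigroup, one computes that
\[
sgs^{-1} \leq s(s^{-1}es)s^{-1} = (ss^{-1})e(ss^{-1}) = ess^{-1} \leq e.
\]
The crucial step is to confirm that $sgs^{-1} \in N(S)$. Using $g \leq s^{-1}s$, direct computation gives $(sg)(sg)^{-1} = sgs^{-1}$ and $(sg)^{-1}(sg) = gs^{-1}sg = g(s^{-1}s) = g$, so $g \mathrel{\mathscr{L}} sg \mathrel{\mathscr{R}} sgs^{-1}$ and therefore $g \mathrel{\mathscr{D}} sgs^{-1}$. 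Since $\mathscr{R}$-classes within a common $\mathscr{D}$-class have equal cardinality by Green's lemma, and $|R_g| \geq 2$ because $g \in N(S)$, we deduce $|R_{sgs^{-1}}| \geq 2$, i.e.\ $sgs^{-1} \in N(S)$. Then $sgs^{-1} \in (e)\phi_S = (f)\phi_S$ gives $sgs^{-1} \leq f$, and conjugating yields $g = s^{-1}(sgs^{-1})s \leq s^{-1}fs$, where the identity $s^{-1}(sgs^{-1})s = g$ uses $g \leq s^{-1}s$ and idempotent commutativity once more.

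The main obstacle is the step $sgs^{-1} \in N(S)$: the definitions only tell us $g = uu^{-1}$ for some non-idempotent $u$, but the naive candidate $su$ for a non-idempotent with $(su)(su)^{-1} = sgs^{-1}$ may itself be idempotent. The $\mathscr{D}$-class argument above sidesteps this by using Green's lemma on $\mathscr{R}$-class cardinalities rather than attempting a direct construction. Everything else in the proof is straightforward partial-order manipulation and an invocation of the standard kernel--trace machinery for idempotent-pure congruences.
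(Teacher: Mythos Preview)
Your proof is correct and takes a genuinely different route from the paper's.

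The paper argues by induction on the height function $\mathbf{h}$: it passes to the ideal $I=\{s:(s)\mathbf{h}<k\}$, uses that $I$ is $E$-disjunctive by \cref{ideal_lem}, and then for a putative pair $e\neq f$ with $(e)\phi_S=(f)\phi_S$ and $e\notin I$ either obtains a contradiction directly (when $e\in N(S)$) or explicitly exhibits the two-element relation $\{(e,ef)\}\cup\Delta_S$ as a non-trivial idempotent-pure congruence. This requires the preparatory lemmas on $\mathbf{h}$ (\cref{lem-the-little-d} and \cref{fall_to_lower_level_lem}).

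Your argument bypasses all of that height machinery: you identify $\ker\phi_S$ as a normal congruence on $E(S)$ and invoke the standard fact that every normal semilattice congruence is the trace of an idempotent-pure congruence on $S$, which $E$-disjunctivity then forces to be trivial. The only substantive step is showing $N(S)$ is stable under conjugation in the relevant sense, and your Green's-lemma argument (comparing $|R_g|$ and $|R_{sgs^{-1}}|$ via $g\,\mathscr{L}\,sg\,\mathscr{R}\,sgs^{-1}$) handles this cleanly, sidestepping the problem that $su$ may be idempotent. A pleasant by-product is that your proof nowhere uses finiteness of $S$, so it actually establishes the lemma for arbitrary inverse semigroups; the paper's inductive proof genuinely needs $S$ finite for the height function to be defined. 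What the paper's approach buys in exchange is that it is self-contained, producing an explicit witnessing congruence rather than appealing to the kernel--trace correspondence.
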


\begin{proof}
By \cref{interesting_hom_lem}, we need only show that \(\phi_{S}\) is injective.
  We proceed by induction on $(S)\mathbf{h} \coloneqq  \max\set{(s)\mathbf{h}}{s\in S}$.
  If \((S)\mathbf{h} = 0\), then $S = \varnothing$ and so \(\phi_{S}\) is an embedding.

  Suppose that \((S)\mathbf{h} = k\) and the result holds for all finite \(E\)-disjunctive inverse semigroups $T$ with $(T)\mathbf{h} < k$.
  Then  the set \(I \coloneqq  \set{s\in S}{(s)\mathbf{h} < k}\) is 
  an ideal of \(S\), and $(I)\mathbf{h} = k - 1$, and $I$ is \(E\)-disjunctive by \cref{ideal_lem}. 
  Thus by induction \(\phi_{I}\) is an embedding.

  Suppose  that there exist \(e, f \in E(S)\) such that \((e)\phi_{I}= (f)\phi_{S}\). 
  If \(e, f\in I\), then, since $\phi_{I}$ is just the restriction of $\phi_{S}$ to $I$, 
  \[(e)\phi_{I} = (e)\phi_{S} = (f)\phi_{S}= (f)\phi_{I}\]
  and so, since \(\phi_{I}\) is injective, \(e=f\), as required.

  Hence it remains to prove the lemma in the case that $e\not\in I$ or $f\not\in I$. Suppose without loss of generality that \(e\not \in I\) and, seeking a contradiction, that $e\neq f$.
  By assumption, 
  \begin{equation}\label{eq-ef-phi}
  (e)\phi_{S} = (e)\phi_{S}\cap (e)\phi_{S}=(e)\phi_{S}\cap (f)\phi_{S}=(ef)\phi_{S}.
  \end{equation}
  
 If there exists a non-idempotent $u\in S$ such that $e=uu^{-1}$, then \(e \in (e)\phi_{S}\). Since $e\neq f$ and \((e)\phi_{S} = (f)\phi_{S}\) by assumption, \(e < f\). It follows that $(f)\mathbf{h} > (e)\mathbf{h} = k = (S)\mathbf{h}$, which is a contradiction.
  
  Suppose that $uu^{-1}\neq e$ for all non-idempotents $u\in S$, and suppose that $\rho = \{(e, ef)\} \cup \Delta_S$.
  To reach our final contradiction it suffices to show that \(\rho\) is a congruence.
  We show \(\rho\) is a right congruence; the proof that \(\rho\) is a left congruence is symmetric.
  
  Let \(u\in S\) be arbitrary. If \(u = e\), then \(eu=ee=e\), $ef=efe=efu$, and $(e,ef) \in\rho $,
  and so $(eu,efu) \in \rho$. If \(u\neq e\), then 
  we will also show that \(eu= efu\).
  By assumption, $uu^{-1}\neq e$.
  Since \((e)\mathbf{h} = k\), it follows from \cref{fall_to_lower_level_lem}, that \(euu^{-1}, efuu^{-1} \in I\). So
  \begin{align*}
      (euu^{-1})\phi_{I} &= (euu^{-1})\phi_{S}\\
      &= (e)\phi_{S} \cap (uu^{-1})\phi_{S}\\
      &= (ef)\phi_{S}\cap (uu^{-1}) \phi_{S} && \text{by }\eqref{eq-ef-phi}\\
       &= (efuu^{-1})\phi_{S}\\
       &= (efuu^{-1})\phi_{I}.
  \end{align*}
  Thus \(euu^{-1} =efuu^{-1}\), since $\phi_{I}$ is an embedding and so $eu=efu$ also.
  Therefore \(\rho\) is a non-trivial
  idempotent-pure congruence on \(S\), and so \(S\) is not 
  \(E\)-disjunctive, a contradiction.
\end{proof}

The converse of \cref{semi_embed_lem} is not true, for example, if
\(S\) is the strong semilattice of groups defined by an identity map from the cyclic group \(C_2\) of order $2$ to \(C_2\), then \(\phi_{S}\) is injective,
but \(S\) is not \(E\)-disjunctive.

Let \(S\) be an inverse semigroup and let \(e \in E(S)\). Then \textit{syntactic
readout} of \(e\) is the function \(\phi_e \colon ((S\setminus E(S)) \cup
\{1_S\}) \times ((S\setminus E(S)) \cup \{1_S\}) \to \{0, \ 1\}\) defined by 
\[
(\alpha, \beta)\phi_e = 
\begin{cases}
  0 & \text{if }  \alpha e \beta \in E(S)\\
  1 & \text{if } \alpha e \beta \notin E(S)
\end{cases}
\]
for all $\alpha, \beta\in S\setminus E(S) \cup \{1\}$.

  \begin{lem}
      \label{lem:syn-read}
      Let \(S\) be an \(E\)-disjunctive inverse semigroup and let
      \(e, f \in E(S)\). Then $\phi_e = \phi_f$
      (i.e. $e$ and $f$
      have the same
      syntactic readout) if and only if \(e = f\).
  \end{lem}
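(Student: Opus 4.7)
The converse implication is immediate. For the forward direction, the plan is to upgrade $\phi_e=\phi_f$---which a priori only asserts that $\alpha e\beta\in E(S)\Leftrightarrow\alpha f\beta\in E(S)$ for $\alpha,\beta\in (S\setminus E(S))\cup\{1_S\}$---to the same equivalence for \emph{all} $\alpha,\beta\in S^1$. This would exhibit $e$ and $f$ as related by the syntactic congruence on $S$; since $S$ is $E$-disjunctive, Lemma~\ref{lem:syn-eq} then forces $e=f$. I would perform the upgrade by splitting on the partition of $S^1$ into $E(S)$ and $(S\setminus E(S))\cup\{1_S\}$.

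Two cases are immediate. If both $\alpha,\beta\in (S\setminus E(S))\cup\{1_S\}$, the desired equivalence is exactly the hypothesis. If both $\alpha$ and $\beta$ are idempotents (or, more generally, any situation in which every factor of $\alpha e\beta$ is an idempotent), then both $\alpha e\beta$ and $\alpha f\beta$ are products of pairwise commuting idempotents and hence lie in $E(S)$ automatically, so the equivalence holds trivially. This handles, in particular, the subcases $\beta=1_S$ or $\beta\in E(S)$ whenever $\alpha\in E(S)$, and symmetrically with the roles of $\alpha$ and $\beta$ reversed.

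The crux is the mixed case, where without loss of generality $\alpha\in E(S)\setminus\{1_S\}$ and $\beta\in S\setminus E(S)$. Exploiting that idempotents of an inverse semigroup commute, rewrite $\alpha e\beta = e\alpha\beta$ and $\alpha f\beta = f\alpha\beta$. If $\alpha\beta\in E(S)$, both products are once more products of commuting idempotents, so they lie in $E(S)$ and the equivalence is trivial. If $\alpha\beta\in S\setminus E(S)$, then applying the hypothesis $\phi_e=\phi_f$ to the pair $(1_S,\alpha\beta)$ yields $e\alpha\beta\in E(S)\Leftrightarrow f\alpha\beta\in E(S)$, which is the desired equivalence. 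The symmetric subcase with $\alpha\in S\setminus E(S)$ and $\beta\in E(S)\setminus\{1_S\}$ is handled analogously using $e\beta=\beta e$.

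The only non-routine step is this last rewriting via commutativity of idempotents, which moves the ``bad'' idempotent factor into the middle so that it can be absorbed into $e$ (or $f$), thereby reducing the problem to a pair $(1_S,\alpha\beta)$ for which the syntactic readout hypothesis already provides the equivalence.
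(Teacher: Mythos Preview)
Your proposal is correct and follows essentially the same route as the paper: both upgrade the equivalence from pairs in $(S\setminus E(S))\cup\{1_S\}$ to all of $S^1$ by commuting an idempotent factor past $e$ (resp.\ $f$) and then invoking the hypothesis at the pair $(1_S,\alpha\beta)$, concluding via Lemma~\ref{lem:syn-eq}. The only cosmetic difference is that you separate out the ``all factors idempotent'' subcase explicitly, whereas the paper absorbs it into the single case ``$\alpha\in E(S)$ or $\beta\in E(S)$''.
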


\begin{proof}
The converse implication is trivial. 

    For the forward implication, since $S$ is $E$-disjunctive, it follows from \cref{lem:syn-eq} that the syntactic congruence on $S$ is $\Delta_S$. 
    Suppose that $e, f\in E(S)$ have the same syntactic readout.
    To show \(e = f\), it suffices to show that $(e, f)$ belongs to the syntactic congruence of $S$. In other words, 
 to show that 
    \begin{equation}\label{eq-readout}
        \alpha e \beta \in E(S) \iff \alpha f \beta \in E(S)
    \end{equation}
    for all \(\alpha, \beta \in S^1\).
    By assumption, \eqref{eq-readout} holds for all 
    \(\alpha, \beta \in S^1 \setminus E(S)\).

    Suppose that $\alpha\in E(S)$ or $\beta\in E(S)$. We may suppose 
    without loss of generality that \(\alpha \in E(S)\) and that $\alpha e\beta \in E(S)$.
    Since $\alpha\in E(S)$, \(\alpha e \beta = e \alpha \beta\) and \(\alpha f
    \beta = f \alpha \beta\). If \(\alpha \beta \in E(S)\),
    then \(f \alpha \beta = \alpha f \beta \in E(S)\). Otherwise,
    \(\alpha \beta \notin E(S)\), and since $(1_S, \alpha\beta)\phi_e = (1_S, \alpha\beta)\phi_f$ it follows that \(e \alpha
    \beta \in E(S)\) implies \(f \alpha \beta \in E(S)\). 
    Hence in both cases $\alpha e \beta \in E(S)$ implies $\alpha f \beta\in E(S)$, and the converse implication 
    follows by symmetry.
\end{proof}

\begin{proof}[Proof of \cref{idem_bound_thm}.]
We consider the cases when $S$ is finite and infinite separately. 

Suppose that \(S\) is finite. 
Clearly, \(|S| = |E(S)| + n\). By \cref{semi_embed_lem}, it follows that 
\[|S| = |E(S)| + n =|(E(S))\phi_{S}| + n \leq |\mathcal P(N(S))| + n.\]
The map sending a non-idempotent \(s\) of \(S\) to \(ss^{-1}\), is surjective with image set \(N(S)\). Thus \(|N(S)|\leq n\). In particular
\[|S|\leq |\mathcal P(N(S))| + n =  2^{|N(S)|} + n \leq 2^n + n.\]

Suppose that \(S\) is infinite, and that \(\kappa = |S \setminus E(S)|\). By \cref{lem:syn-read}, idempotents of \(S\) are uniquely determined by their syntactic readouts. There are at most \((2^\kappa)(2^\kappa) = 2 ^ {\kappa}\) syntactic readouts and so \(|S| \leq 2^\kappa + \kappa\).
\end{proof}

\begin{cor}
  \label{cor:nonidempotents_infinite}
  Let \(S\) be an infinite \(E\)-disjunctive inverse
  semigroup. Then \(S\) has infinitely many non-idempotents.
\end{cor}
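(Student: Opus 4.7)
The plan is to deduce this corollary directly from \cref{idem_bound_thm}. The argument is a one-line contrapositive: if $S$ had only finitely many non-idempotents, then $\kappa = |S \setminus E(S)|$ would be a finite cardinal, and so $2^\kappa + \kappa$ would also be finite, forcing $|S| \leq 2^\kappa + \kappa$ to be finite, contradicting the hypothesis that $S$ is infinite.

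More precisely, I would argue by contradiction. Suppose $S$ is an infinite $E$-disjunctive inverse semigroup with only finitely many non-idempotents, and let $\kappa = |S \setminus E(S)| \in \mathbb{N}$. By \cref{idem_bound_thm}, we have $|S| \leq 2^\kappa + \kappa$, which is finite. This contradicts $|S|$ being infinite, so $S \setminus E(S)$ must be infinite.

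I do not anticipate any obstacle here, since the bulk of the work is already done inside \cref{idem_bound_thm} (specifically in \cref{semi_embed_lem} and \cref{lem:syn-read}); the corollary is simply the observation that the bound $|S| \leq 2^\kappa + \kappa$ has consequences for the cardinality of $S \setminus E(S)$ when $S$ is infinite. The only minor care needed is to note that $\kappa$ finite implies $2^\kappa + \kappa$ finite, which is immediate.
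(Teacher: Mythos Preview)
Your proposal is correct and matches the paper's approach: the corollary is stated immediately after the proof of \cref{idem_bound_thm} without a separate proof, precisely because it follows by the contrapositive argument you give.
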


The next example shows that the bound in \cref{idem_bound_thm} is sharp. 

\begin{ex}
  Let \(\kappa\) be a finite or infinite cardinal. We define a Clifford semigroup \(S\) with
  \(\kappa\) non-idempotents and \(2^\kappa\) idempotents by defining
  a strong semilattice of groups.
The semilattice $Y$ is the power set of \(\kappa\) under intersection; the groups are defined by:
\begin{equation*}
G_y =
\begin{cases}
    1 & \text{if } |y| \neq 1\\
    C_2 & \text{if } |y| = 1\\
\end{cases}
\end{equation*} 
for all $y\in Y$ where $1$ denotes the trivial group and $C_2$ the cyclic group of order $2$; and every homomorphism $\psi_{y, z}: G_y \to G_z$ with $y, z\in Y$ is constant; see \cref{attaining_idem_bound_fig} for an example.
Clearly there is an idempotent in $S$ for every subset of $\kappa$, and there is a non-idempotent for every element of $\kappa$. Hence $|S| = 2 ^ {\kappa} + \kappa$.
Since \(S\) is a semilattice of abelian groups, \(S\) is also
  commutative. 
      \begin{figure}
      \begin{tikzpicture}
        [scale=.4, auto=left,every node/.style={rectangle, draw}]
        
        \node (l3) at (0, 15) {\(1\)};
        
        \node (l21) at (-5, 10) {\(1\)};
        \node (l22) at (0, 10) {\(1\)};
        \node (l23) at (5, 10) {\(1\)};
        
        \node (C21) at (-5, 5) {\(C_2\)};
        \node (C22) at (0, 5) {\(C_2\)};
        \node (C23) at (5, 5) {\(C_2\)};
        
        \node (0) at (0, 0) {\(1\)};

        \draw (l3) to (l21);
        \draw (l3) to (l22);
        \draw (l3) to (l23);

        \draw (l21) to (C21);
        \draw (l21) to (C22);
        \draw (l22) to (C21);
        \draw (l22) to (C23);
        \draw (l23) to (C22);
        \draw (l23) to (C23);
        
        \draw (C21) to (0);
        \draw (C22) to (0);
        \draw (C23) to (0);
      \end{tikzpicture}
      \caption{Egg-box diagram of a semigroup that attains the bound
      in \cref{idem_bound_thm} when \(\kappa = 3\)}
      \label{attaining_idem_bound_fig}
    \end{figure}
  
  It remains to show that \(S\) is \(E\)-disjunctive. Let $e, f\in E(S)$ be such that \(e < f\). 
If $e$ and $f$ are the idempotents belonging to $G_y$ and $G_z$, respectively, then we abuse our notation by writing $\psi_{e,f}$ instead of $\psi_{y, z}$.
  By \cite[Theorem 6]{Clifford_IP}, it suffices to
  show that there exists \(g \in E(S)\) such that the map \(\psi_{eg,
  fg}\) is not injective. Since
  \(e < f\), \(f\) is non-zero. For this semigroup $S$, $N(S) = \set{h\in E(S)}{h\in G_y \text{ for some } y\in Y \text{ with } |y| = 1}$.
  If $f\in N(S)$, then we set $g = f > e$ and so $g\not \leq e$. If $f\not\in N(S)$, then there exists  
  $g\in N(S)$ such that $g\leq f$ and $g\not \leq e$ by the construction of $S$.
  In either case, $g\in N(S)$, $g\not\leq e$, and $g \leq f$.
  Hence \(eg < g\) and \(fg = g\). The former implies that $eg = 0$, and so \(\psi_{fg, eg}\) is a constant function from \(C_2\) to a trivial
  group, and is not injective, as required.
\end{ex}

\section{Maximum \(E\)-disjunctive images}
    \label{sec:maximage}

    Every inverse semigroup $S$ has an \(E\)-disjunctive quotient: the quotient of $S$ by its syntactic congruence $\rho$. By \cref{lem:largest-ip-cong}, if $T$ is any $E$-disjunctive semigroup such that $T$ is a homomorphic image of $S$, then $T$ is a quotient of $S/ \rho$. As such we refer to $S/ \rho$ as the \textit{maximum \(E\)-disjunctive quotient} of the inverse semigroup $S$. Since quotients and homomorphic images are interchangeable, we may also refer to $S/\rho$ as the \textit{maximum \(E\)-disjunctive image} of $S$. We will show that many properties can be exchanged between an inverse semigroup and its maximum \(E\)-disjunctive image. The situation is somewhat
     similar to the relationship between an \(E\)-unitary inverse semigroup and its maximum group homomorphic image.
  Of course, unlike the case for maximum group images, maximum \(E\)-disjunctive images are not always groups, and so it
  is not clear to what extent this can be used to study inverse semigroups in general. 
  In this section we will explore the relationship between an inverse semigroup and its maximum $E$-disjunctive image. 
  In \cref{sec:mcealister}, we show that every inverse semigroup is described by its maximum \(E\)-disjunctive image and semilattice of idempotents, somewhat analogous to the description of $E$-unitary inverse semigroup via McAlister triples. 

  An inverse semigroup is \(E\)-unitary if and only if its minimum group congruence is idempotent-pure. Since every group is \(E\)-disjunctive, it follows that the maximum \(E\)-disjunctive image of an \(E\)-unitary inverse semigroup is a group. Conversely, if \(S\) is an inverse semigroup whose maximum \(E\)-disjunctive homomorphic image is a group, then the maximum group and $E$-disjunctive images coincide. In other words,  we have the following result.
  
  \begin{proposition}
  The maximum \(E\)-disjunctive image of an inverse semigroup $S$ is a group if and only if $S$ is \(E\)-unitary.
  \end{proposition}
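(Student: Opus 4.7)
The plan is to exploit two well-known facts: first, the syntactic congruence $\rho$ is by \cref{lem:largest-ip-cong} the \emph{largest} idempotent-pure congruence on $S$, so that $S/\rho$ is the maximum $E$-disjunctive image; and second, the minimum group congruence $\sigma$ on an inverse semigroup $S$ is the \emph{smallest} congruence whose quotient is a group, and $S$ is $E$-unitary precisely when $\sigma$ is idempotent-pure. Both directions then reduce to a comparison of $\rho$ and $\sigma$.

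For the $(\Leftarrow)$ direction, suppose $S$ is $E$-unitary. Then $\sigma$ is an idempotent-pure congruence, so by \cref{lem:largest-ip-cong} we have $\sigma \subseteq \rho$. Consequently $S/\rho$ is a homomorphic image of the group $S/\sigma$, and any homomorphic image of a group is a group; hence the maximum $E$-disjunctive image $S/\rho$ is a group.

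For the $(\Rightarrow)$ direction, suppose $S/\rho$ is a group. Then $\rho$ is itself a group congruence (that is, a congruence whose quotient is a group), so by minimality of $\sigma$ we have $\sigma \subseteq \rho$. Since $\rho$ is idempotent-pure and any subrelation of an idempotent-pure congruence that happens to be a congruence is again idempotent-pure, we conclude that $\sigma$ is idempotent-pure, i.e.\ $S$ is $E$-unitary.

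No step here presents a real obstacle; the whole content of the argument is the maximality/minimality pairing of $\rho$ and $\sigma$. The only small care needed is in the $(\Rightarrow)$ direction to remember that ``$\rho$ is idempotent-pure'' transfers to $\sigma$ simply because $\sigma\subseteq \rho$, together with the characterisation of $E$-unitarity via $\sigma$ being idempotent-pure (stated in the paragraph immediately preceding the proposition).
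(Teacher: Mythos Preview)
Your proof is correct and follows essentially the same approach as the paper: both directions hinge on comparing the syntactic congruence $\rho$ (the largest idempotent-pure congruence) with the minimum group congruence $\sigma$, using that $E$-unitarity is equivalent to $\sigma$ being idempotent-pure. Your write-up is in fact more explicit than the paper's one-paragraph sketch, spelling out the inclusions $\sigma\subseteq\rho$ in each direction and the consequences drawn from them.
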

  Next, we use a theorem of Kambites~\cite{Kambites2015} to show that a finitely generated inverse
  semigroup is finite if and only its maximum \(E\)-disjunctive image is finite. To do this,
  we first define the idempotent problem of a finitely generated inverse semigroup $S$ as follows.
    Let $\Sigma$ be a finite generating set for \(S\). Then the \textit{idempotent problem} of \(S\), with respect to \(\Sigma\), denoted \(\IP(S, \Sigma)\) is the language consisting of all words over \(\Sigma \cup \Sigma^{-1}\) that represent idempotents in \(S\).

  \begin{theorem} [\cite{Kambites2015}]
    \label{thm:inv_sgp_anisimov}
    Let \(S\) be an inverse semigroup with a finite generating set \(\Sigma\). Then \(S\) is finite
    if and only if \(\IP(S, \Sigma)\) is a regular language.
  \end{theorem}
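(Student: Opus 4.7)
The plan is to prove the two implications separately. The forward direction is routine and constructive. If $S$ is finite, we build a deterministic finite automaton $\mathcal{A}$ recognizing $\IP(S, \Sigma)$ as follows: its state set is $S^1$, the initial state is $1$, transitions are given by right-multiplication by elements of $\Sigma \cup \Sigma^{-1}$ (which are well-defined since $S$ is an inverse semigroup), and the accepting states are $E(S^1)$. Then $w \in (\Sigma \cup \Sigma^{-1})^*$ is accepted by $\mathcal{A}$ if and only if $[w]_S \in E(S)$, so $\IP(S, \Sigma)$ is regular.

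The backward direction is an Anisimov-type theorem for inverse semigroups and is the substantive part. Suppose $\IP(S, \Sigma)$ is regular. By Myhill--Nerode, its (two-sided) syntactic monoid is finite; pulling back along the canonical surjection $(\Sigma \cup \Sigma^{-1})^* \to S$, the syntactic congruence on $S$ with respect to $E(S)$ has finite index. By \cref{lem:largest-ip-cong}, this syntactic congruence is the maximum idempotent-pure congruence on $S$, and its quotient is the maximum $E$-disjunctive image $\bar{S}$ of $S$. Hence $\bar{S}$ is finite, and in particular $E(S)$ itself is finite (since the kernel of an idempotent-pure congruence consists of the preimages of the idempotents of $\bar S$, and the restriction of such a congruence to $E(S)$ injects $E(S)$ into $E(\bar S)$).

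The heart of the argument is to show that $\bar S$ being finite forces $S$ to be finite; this is where the inverse semigroup structure is used decisively. Suppose, for a contradiction, that $S$ is infinite. Then some fiber of the quotient map $S \to \bar S$ is infinite, giving an infinite sequence of distinct $s_k \in S$ all identified under the syntactic congruence. By \cref{prop:comp-rel} each pair $(s_k, s_l)$ is compatible, so $s_k s_l^{-1}, s_k^{-1}s_l \in E(S)$. Choosing word representatives $w_k$ for $s_k$ of length going to infinity, the words $w_k w_l^{-1}$ all lie in $\IP(S, \Sigma)$. One then applies a pumping argument to a fixed DFA $\mathcal{B}$ recognizing $\IP$: a repeated state in the run of $\mathcal{B}$ on a sufficiently long $w_k w_l^{-1}$ yields a shorter word in $\IP$, and combined with compatibility and the finiteness of $E(S)$ just established, this forces only finitely many of the $s_k$ to be distinct, a contradiction.

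The main obstacle lies in the last step: translating a pumping shortening of a word in $\IP$ (which only guarantees that the shortened word represents \emph{some} idempotent) into a genuine equality between semigroup elements of $S$. To bridge this gap, one must exploit the natural partial order (\cref{partial_order_prop}), the fact that compatibility plus agreement of $\mathscr D$-theoretic data implies equality in an inverse semigroup, and the already-established finiteness of both $\bar S$ and $E(S)$, in order to upgrade finiteness of the quotient data to the conclusion that the $s_k$ take only finitely many values.
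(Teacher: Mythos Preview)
The paper does not prove this theorem: it is quoted from \cite{Kambites2015} and used as a black box (to prove \cref{thm:fg-max-E-im}). So there is no ``paper's own proof'' to compare against; I will simply assess your attempt.

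Your forward direction is correct and standard. Your backward direction begins well: the observation that regularity of $\IP(S,\Sigma)$ forces the syntactic monoid of this language to be finite, and that this syntactic congruence descends to the syntactic congruence on $S$ with respect to $E(S)$, is correct and gives that the maximum $E$-disjunctive image $\bar S=S/\rho$ is finite. This is a clean reduction.

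However, there is a genuine gap from that point on. First, your claim that ``the restriction of such a congruence to $E(S)$ injects $E(S)$ into $E(\bar S)$'' is false: idempotent-pure means the \emph{kernel} is $E(S)$, but the \emph{trace} can be highly nontrivial, so distinct idempotents of $S$ may well be identified in $\bar S$. (Any nontrivial semilattice already witnesses this.) So the finiteness of $E(S)$ is not established by your argument, and the pumping sketch that follows relies on it. Second, even granting $E(S)$ finite, the last two paragraphs do not constitute a proof: you obtain that pumped words lie in $\IP$, i.e.\ represent \emph{some} idempotent, but you never explain how this yields equalities among the $s_k$; the appeal to ``compatibility plus agreement of $\mathscr D$-theoretic data'' is a wish, not an argument.

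It is worth noting that the implication you are missing, ``$\bar S$ finite $\Rightarrow$ $S$ finite'' for finitely generated $S$, is exactly \cref{thm:fg-max-E-im}, which the paper proves \emph{using} \cref{thm:inv_sgp_anisimov}. So you have correctly identified that these two statements are essentially equivalent, but you have not given an independent proof of either; to complete your approach you would need to supply Kambites' actual argument (or another one) rather than the sketch above.
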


\cref{thm:inv_sgp_anisimov} allows us to characterise the inverse semigroups with finite 
maximum $E$-disjunctive image as follows.

  \begin{theorem}\label{thm:fg-max-E-im}
    Let \(S\) be a finitely generated inverse semigroup. Then \(S\) has a finite maximum \(E\)-disjunctive image if and
    only if \(S\) is finite.
  \end{theorem}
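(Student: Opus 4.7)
The forward direction is immediate: any quotient of a finite semigroup is finite.

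For the reverse direction, suppose $S$ has a finite maximum $E$-disjunctive image $T = S/\rho$, where $\rho$ is the syntactic congruence of $S$. Let $\Sigma$ be a finite generating set for $S$ and let $\pi \colon S \to T$ be the canonical surjection. Since $\rho$ is idempotent-pure by \cref{lem:largest-ip-cong}, the plan is to verify that $\IP(S, \Sigma)$ is a regular language and then apply \cref{thm:inv_sgp_anisimov}.

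The central observation is that $s \in E(S)$ if and only if $(s)\pi \in E(T)$. One direction is immediate: if $s$ is idempotent, then so is $(s)\pi$. Conversely, if $(s)\pi$ is idempotent in the inverse semigroup $T$, then
\[
(s)\pi \;=\; (s)\pi\,((s)\pi)^{-1} \;=\; (s s^{-1})\pi,
\]
so $(s, ss^{-1}) \in \rho$, and idempotent-purity forces $s \in E(S)$.

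Now define the monoid homomorphism $\phi \colon (\Sigma \cup \Sigma^{-1})^\ast \to T^1$ extending $x \mapsto (x)\pi$ and $x^{-1} \mapsto ((x)\pi)^{-1}$. A word $w \in (\Sigma \cup \Sigma^{-1})^\ast$ represents an idempotent of $S$ exactly when its evaluation in $T$ lies in $E(T)$, that is, $\IP(S, \Sigma) = (E(T))\phi^{-1}$. Since $T^1$ is finite, $E(T)$ is a finite subset of it, and hence its preimage under the monoid homomorphism $\phi$ is a regular language (recognised by the finite monoid $T^1$). By \cref{thm:inv_sgp_anisimov}, $S$ is finite.

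The only subtle point is the biconditional $s \in E(S) \iff (s)\pi \in E(T)$, which crucially uses both the inverse semigroup structure (to rewrite an element with idempotent image as $ss^{-1}$ modulo $\rho$) and the idempotent-purity of the syntactic congruence; once this is established, the appeal to Kambites' regularity characterisation is routine.
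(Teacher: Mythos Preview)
Your proof is correct and follows essentially the same approach as the paper: both use idempotent-purity of $\rho$ to conclude that $\IP(S,\Sigma)$ is the preimage of the idempotents of $T$ under the canonical evaluation map to the finite monoid $T^1$, hence regular, and then invoke \cref{thm:inv_sgp_anisimov}. (Note that your ``forward'' and ``reverse'' labels are swapped relative to the stated biconditional, but the mathematics is fine.)
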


  \begin{proof}
  ($\Leftarrow$):
    Since quotients of finite semigroups are finite, if \(S\) is finite, then so is its maximum \(E\)-disjunctive image. 
    
  ($\Rightarrow$):
    Suppose that \(S\) has a finite maximum \(E\)-disjunctive image $T$, that $\Sigma$ is a finite generating set for $S$, and that
  \(\phi \colon S \to T\) is an idempotent-pure epimorphism. Since \(\phi\) is idempotent-pure and surjective, \(\phi\) induces a bijection between \(E(S)\) and \(E(T)\) and an isomorphism from the free monoid $(\Sigma\cup \Sigma ^ {-1}) ^ *$ with generators $\Sigma\cup \Sigma ^ {-1}$ and the free monoid $((\Sigma\cup \Sigma ^ {-1})\phi) ^ *$
    that maps 
    \(\IP(S, \Sigma)\) to \(\IP(T, (\Sigma)\phi)\). By
    \cref{thm:inv_sgp_anisimov}, since $T$ is finite, \(\IP(T, (\Sigma)\phi)\) is a regular language. Thus \(\IP(S, \ \Sigma)\), as the image under an isomorphism of a regular language, is itself regular. Applying \cref{thm:inv_sgp_anisimov} again yields that \(S\) is finite.
  \end{proof}

  Every  semilattice has trivial maximum \(E\)-disjunctive image. Since every
  finitely generated semilattice is finite, if $S$ is an infinite semilattice,
  then $S$ is not finitely generated, but its maximum $E$-disjunctive image is
  finite. In other words, the finitely generated hypothesis in
  \cref{thm:fg-max-E-im} cannot be removed.


\begin{lem}\label{injR}
        Let \(\phi \colon S \to T\) be an idempotent-pure homomorphism. Then \(\phi |_R\) is
    injective for every \(\mathscr{R}\)-class \(R\) of $S$.
\end{lem}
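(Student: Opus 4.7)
The plan is to combine \cref{prop:comp-rel} with the standard observation that compatible $\mathscr{R}$-related elements of an inverse semigroup must be equal. Fix an $\mathscr{R}$-class $R$ of $S$ and take $a, b \in R$ with $(a)\phi = (b)\phi$. Then $(a, b) \in \ker\phi$, and since $\phi$ is idempotent-pure, $\ker\phi$ is an idempotent-pure congruence on $S$. By \cref{prop:comp-rel}, $\ker\phi$ is contained in the compatibility relation, so $ab^{-1}, a^{-1}b \in E(S)$.

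Next, I would translate the $\mathscr{R}$-relation into the natural partial order. Since $a \mathscr{R} b$ in $S$, we have $aa^{-1} = bb^{-1}$, so
\[
a = aa^{-1}a = bb^{-1}a = b(b^{-1}a).
\]
The element $b^{-1}a = (a^{-1}b)^{-1}$ is the inverse of an idempotent, and idempotents of an inverse semigroup are self-inverse, so $b^{-1}a = a^{-1}b \in E(S)$. This exhibits $a$ as $b$ multiplied on the right by an idempotent, i.e., $a \leq b$ in the natural partial order. Exchanging the roles of $a$ and $b$ yields $b \leq a$ by the same argument, whence $a = b$ and $\phi|_R$ is injective.

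No significant obstacle is anticipated: once the idempotent-pure hypothesis is converted into compatibility via \cref{prop:comp-rel}, the argument reduces to a short computation using $aa^{-1} = bb^{-1}$ and the characterisation of the natural partial order by $s \leq t \iff s = tf$ for some $f \in E(S)$.
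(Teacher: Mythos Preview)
Your proof is correct, but it takes a different route from the paper's. The paper does not invoke \cref{prop:comp-rel} at all: instead it observes, via Green's Lemma, that left multiplication by $a^{-1}$ maps $R_a$ bijectively onto $R_{a^{-1}a}$, so one may replace the pair $(a,b)$ by $(a^{-1}a,\, a^{-1}b)$ and assume without loss of generality that $a$ is an idempotent. Idempotent-purity then forces $b$ to be an idempotent as well, and since an $\mathscr{R}$-class of an inverse semigroup contains at most one idempotent, $a=b$.

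Your argument instead converts idempotent-purity into compatibility and then uses the natural partial order to conclude $a\leq b$ and $b\leq a$. This is a perfectly clean alternative and arguably more conceptual, tying the result directly to the compatibility characterisation; the paper's argument is marginally more self-contained since it avoids the extra citation and works purely with Green's relations and the definition of idempotent-pure.
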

\begin{proof}
    Suppose that \(a, b\in S\) are such that \(a\mathscr{R} b\) and 
    suppose that \((a)\phi = (b)\phi\).  By Green's Lemma the map $\lambda\colon R_a\to R_{a^{-1}a}$ defined by left multiplying by $a ^ {-1}$ is a bijection. Hence 
    \[
    (a^ {-1}a)\phi = (a^ {-1})\phi \cdot (a)\phi = (a^ {-1})\phi \cdot (b)\phi = (a^ {-1}b)\phi
    \]
    and $a^{-1}a \mathscr{R} a^{-1}b$ (since $\mathscr{R}$ is a left congruence).
    Hence we may assume without loss of generality that $a$ is an idempotent. 
    
    Since $\phi$ is idempotent-pure and $(a)\phi = (b)\phi$, it follows that $b$ is an idempotent. Hence since $a\mathscr{R} b$ this implies $a = b$. 
\end{proof}

\section{Preactions}
\label{sec:preactions}
In this section we define a notion that is a weakening of the notion of an
inverse semigroup action. This idea is somewhat analogous to the notion of
partial actions introduced in \cite{KellendonkLawson}. We require this somewhat
technical section in order to prove a generalisation of McAlister's
$P$-Theorem~\cite{McAlister} in \cref{sec:mcealister}. This generalisation
describes every inverse semigroup in terms of an \(E\)-disjunctive inverse
semigroup and a semilattice. 

If \(\mathcal Y\) is a subset of a poset \(\mathcal X\), then we write
\(\mathcal{Y}\mathord{\downarrow} = \{x\in \mathcal X\mid \exists y\in \mathcal
Y,\ x\leq y\}\) for the order ideal of \(\mathcal X\) generated by \(\mathcal
Y\).

Recall that a \textit{partial function} from \(X\) to \(Y\) is a function from a
subset of \(X\) to a subset of \(Y\). We will generalise the notation \(f \colon X \to Y\) to denote a partial function from \(X\) to \(Y\).
\begin{dfn}[\textbf{Action.}]\label{dfn-action}
Suppose that \(S\) is an inverse semigroup, that \(\mathcal Y\) is a poset (we view an unordered set as a poset in which all elements are incomparable when needed), and that \(\alpha \colon \mathcal Y\times S \to \mathcal Y\) is a partial function.
If $s\in S$, then we define 
\(\underline{s}_\alpha \colon \mathcal Y\to \mathcal Y\) to be the partial function defined by \((y)\underline{s}_\alpha = (y,s)\alpha\).  We write $\underline{s}_\alpha$ rather than $s_\alpha$, to avoid having to write parentheses, for example, we write $\underline{st}_\alpha$ instead of $(st)_\alpha$.
We say that \(\alpha\) is an \textit{action of \(S\) on \(\mathcal Y\)} if the following hold for all $s, t\in S$:
\begin{enumerate}
    \item the partial function \(\underline{s}_\alpha\) is an order isomorphism between subsets of \(\mathcal Y\);
    \item \(\underline{st}_\alpha=\underline{s}_\alpha \underline{t}_\alpha\) and \(\underline{s}_\alpha^{-1}=\underline{s^{-1}}_\alpha\).
\end{enumerate}
\end{dfn}

\begin{dfn}[\textbf{Preaction.}]\label{dfn-preaction}
Suppose that \(S\) is an inverse semigroup, that \(\mathcal Y\) is a poset, and that \(\alpha \colon \mathcal Y\times S \to \mathcal Y\) is a partial function.
If $s\in S^1$, then we define 
\(\underline{s}_q \colon \mathcal Y\to \mathcal Y\) to be the partial function defined by \((y)\underline{s}_q=(y,s)q\) (using the identity function if \(s\) is the adjoined identity in \(S ^ 1\)). 
We say that \(q\) is a \textit{preaction of $S$ on $\mathcal Y$} if the following hold for all $s, t, u\in S^1$:
\begin{enumerate}
    \item \label{defn:preaction_preservesorder}
    the partial function \(\underline{s}_q\) is an order isomorphism between subsets of \(\mathcal Y\);
    \item \label{dfn:preaction_downwards_agreement} 
    if \(s\leq t\), then \(\underline{s}_q\subseteq \underline{t}_q\);
    \item \label{dfn:preaction_connnected_groupoid*}
    if \((x, y)\in \underline{s}_q\) and \((y, z)\in \underline{t}_q\), then \((x, z)\in \underline{st}_q\), and  if \((x, y)\in \underline{s}_q\) then \((y, x)\in \underline{s^{-1}}_q\).
    \item \label{defn:down_closed condition} 
    $\dom(\underline{s}_q)\mathord{\downarrow} = \dom(\underline{s}_q)$;
    \item \label{defn:preaction_uses_Y} for all \(y\in \mathcal Y\), there is \(e\in E(S)\) such that \(y\in \dom(\underline{e}_q)\).
\end{enumerate}
\end{dfn}

\begin{lem}\label{lem-dfn-preaction}
Suppose that \(S\) is an inverse semigroup, that \(\mathcal Y\) is a poset, and that \(q \colon \mathcal Y \times S \to \mathcal Y\) is a preaction. If $s, t\in S ^ 1$ and $e\in E(S)$, then the following hold:

\begin{enumerate}[\rm (1)]
\addtocounter{enumi}{5}
    \item \label{defn:preaction_compositioncontainement}
    \(\underline{s}_q \underline{t}_q \subseteq \underline{st}_q\);
    \item \label{dfn:preaction_connnected_groupoid**}
If  $y, z\in \mathcal Y$, $s, t\in S$, and 
any two of \((x, y)\in \underline{s}_q\), \((y, z)\in \underline{t}_q\), \((x, z)\in \underline{st}_q\) hold, then so does the third;
\item \label{dfn:preaction-partial-id}
If \(e\in E(S)\) is an idempotent, then \(\underline{e}_q\) is a partial identity function.
\end{enumerate}
\end{lem}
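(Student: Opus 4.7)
The plan is to establish the three items in order, using axioms (1)--(5) of \cref{dfn-preaction}. Throughout, I will move between the relational and functional views of $\underline{s}_q$ freely.

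For (6), the containment $\underline{s}_q\underline{t}_q\subseteq \underline{st}_q$ is essentially a restatement of the first half of axiom \eqref{dfn:preaction_connnected_groupoid*}. If $(x,z)$ belongs to the relational composition $\underline{s}_q\underline{t}_q$, then by definition there is some $y\in \mathcal Y$ with $(x,y)\in \underline{s}_q$ and $(y,z)\in \underline{t}_q$, and axiom \eqref{dfn:preaction_connnected_groupoid*} immediately gives $(x,z)\in \underline{st}_q$.

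For (7), the implication that the first two conditions yield the third is just (6). For the other two implications, the idea is to invert one of the entries using the second half of \eqref{dfn:preaction_connnected_groupoid*}, recompose using the first half, and then shrink back to the desired $\underline{t}_q$ or $\underline{s}_q$ using monotonicity \eqref{dfn:preaction_downwards_agreement}. Concretely, given $(x,y)\in \underline{s}_q$ and $(x,z)\in \underline{st}_q$, I would obtain $(y,x)\in \underline{s^{-1}}_q$ by inversion, compose to get $(y,z)\in \underline{s^{-1}st}_q$, and conclude $(y,z)\in \underline{t}_q$ using $s^{-1}st\leq t$ together with \eqref{dfn:preaction_downwards_agreement}. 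The third case is entirely symmetric, using $stt^{-1}\leq s$.

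For (8), the key observation is that applying $\underline{e}_q$ twice forces diagonal entries. Since $e^{-1}=e$, the second half of \eqref{dfn:preaction_connnected_groupoid*} gives that $(x,y)\in \underline{e}_q$ implies $(y,x)\in \underline{e}_q$; composing these via the first half of \eqref{dfn:preaction_connnected_groupoid*} and using $e^2=e$ yields $(x,x)\in \underline{e}_q$. But $\underline{e}_q$ is a partial function by axiom \eqref{defn:preaction_preservesorder}, so having both $(x,x)$ and $(x,y)$ in $\underline{e}_q$ forces $y=x$, i.e., $\underline{e}_q$ is a partial identity.

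I do not anticipate a genuine obstacle here; the only subtlety is being careful that \eqref{dfn:preaction_connnected_groupoid*} as stated in the definition applies to arbitrary $s,t\in S^1$ and not only to elements where $\underline{s}_q$ and $\underline{t}_q$ are fully defined, and that the inequalities $s^{-1}st\leq t$, $stt^{-1}\leq s$ (which live in $S^1$) are the right instances to feed into \eqref{dfn:preaction_downwards_agreement}. Both of these are routine, so the entire lemma falls out of the axioms with essentially no additional machinery.
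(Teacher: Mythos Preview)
Your proposal is correct and matches the paper's proof almost exactly: items (6) and (7) are handled identically, and for (8) the paper argues via $\underline{e}_q\underline{e}_q\subseteq\underline{e}_q$ together with $\dom(\underline{e}_q)=\im(\underline{e^{-1}}_q)=\im(\underline{e}_q)$, which is the same mechanism as your argument, just packaged slightly differently.
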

\begin{proof} 
    \noindent \eqref{defn:preaction_compositioncontainement} This is an immediate consequence of \eqref{dfn:preaction_connnected_groupoid*}.
    \smallskip

\noindent\eqref{dfn:preaction_connnected_groupoid**}
        If  $y, z\in \mathcal Y$, $s, t\in S$,  
 \((x, y)\in \underline{s}_q\), and \((y, z)\in \underline{t}_q\),
then we have \((x, z)\in \underline{st}_q\) by \eqref{dfn:preaction_connnected_groupoid*}.
If  \((x, y)\in \underline{s}_q\) and \((x, z)\in \underline{st}_q\), then by \eqref{dfn:preaction_connnected_groupoid*}, \((y, x)\in \underline{s^{-1}}_q\) and so \((y, z)\in \underline{s^{-1}st}_q\). Thus by \eqref{dfn:preaction_downwards_agreement}, \((y, z)\in \underline{t}_q\).
If  \((y, z)\in \underline{t}_q\) and \((x, z)\in \underline{st}_q\), then by \eqref{dfn:preaction_connnected_groupoid*}, \((z, y)\in \underline{t^{-1}}_q\) and so \((x, y)\in \underline{stt^{-1}}_q\). Thus by \eqref{dfn:preaction_downwards_agreement}, \((x, y)\in \underline{s}_q\).
    \smallskip

\noindent \eqref{dfn:preaction-partial-id} From \eqref{defn:preaction_compositioncontainement}, we have
        that \(\underline{e}_q \underline{e}_q \subseteq \underline{e}_q\), and so \((x) \underline{e}_q = x\) for
        all \(x \in \im(\underline{e}_q)\). By \eqref{dfn:preaction_connnected_groupoid*}, \(\dom \underline{e}_q = \im(\underline{e^{-1}}_q) = 
        \im(\underline{e}_q)\), and so \(\underline{e}_q\) is 
        a partial identity function.
\end{proof}
    If $q \colon \mathcal Y \times S \to \mathcal Y$ in \cref{dfn-preaction} is an inverse semigroup action, then $q$ satisfies \cref{dfn-preaction}\eqref{defn:preaction_preservesorder}, \eqref{dfn:preaction_downwards_agreement}, and  \eqref{dfn:preaction_connnected_groupoid*} and all conditions in \cref{lem-dfn-preaction}.
\begin{ex}\label{ex:need_X}
    Suppose that $\min \N = 0$.
    We define a preaction \(q\) of the additive group \(\Z\) on the set \(\mathcal Y \coloneqq  -\N\) as follows:
    \[\dom(q)=\set{(n, z)\in (-\N) \times \Z}{n+z \in (-\N)},\quad 
    \text{and}\quad(n, z)q=n+z.\]
    In this example, if $z\in \Z$, then $(n)\underline{z}_q = n + z$ and $\dom(\underline{z}_q) = \set{x\in \Z}{ x \leq -z }$. 
    Since we are using additive notation for $\Z$, the conditions in
    \cref{dfn-preaction} become additive; for example, \eqref{dfn:preaction_connnected_groupoid*} becomes 
     ``if \((x, y)\in \underline{s}_q\) and \((y, z)\in \underline{t}_q\), then \((x, z)\in \underline{s+t}_q\)''.
    It is routine to verify that satisfies \cref{dfn-preaction}.
    However, $q$ is clearly not an action in the usual sense, however it naturally extends to one.
\end{ex}

The main result in this section is the following result, which, roughly speaking, states that every preaction can be extended to an inverse semigroup action, albeit on a larger set. 

\begin{theorem}\label{thm:extend_preactions}
Let \(\mathcal Y\) be a poset and let \(S\) be an inverse semigroup. If
\(q\colon \mathcal Y\times S \to \mathcal Y\) is a preaction, then there is a
poset \(\mathcal X_q\supseteq \mathcal Y\) and an action (by partial order
isomorphisms) \(\alpha_q \colon \mathcal X_q \times S \to \mathcal X_q\) such
that 
    \begin{enumerate}[\rm (1)]
        \item \(\mathcal Y\) is an order ideal of \(\mathcal X_q\);
        \item  the restriction of $\alpha_q$ to $(\mathcal Y\times S)\cap (\mathcal Y)\alpha_q^{-1}$ equals $q$;
        \item if \(a,b\in \mathcal X_q\), then \(a\leq b\) if and only if there is \(s\in S^1\) such that \((a,s)\alpha_q, (b,s)\alpha_q\in \mathcal Y\) and \((a,s)\alpha_q\leq (b,s)\alpha_q\).
    \end{enumerate}
\end{theorem}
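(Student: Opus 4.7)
Construct $\mathcal X_q$ as the set of equivalence classes $[y, s]$ of pairs $(y, s) \in \mathcal Y \times S^1$, where $(y_1, s_1) \sim (y_2, s_2)$ if and only if there exist $t_1, t_2 \in S^1$ with $s_1 t_1 = s_2 t_2$ and $(y_1, t_1)q = (y_2, t_2)q$ (both defined). Informally $[y, s]$ represents the formal element $x$ with $(x, s)\alpha_q = y$, so $y \mapsto [y, 1]$ embeds $\mathcal Y$ into $\mathcal X_q$ via $(y, 1)q = y$. Reflexivity (take $t_1 = t_2 = 1$) and symmetry are immediate. For transitivity, given witnesses $(t_1, t_2)$ and $(u_2, u_3)$ yielding $a = (y_1, t_1)q = (y_2, t_2)q$ and $b = (y_2, u_2)q = (y_3, u_3)q$, axiom \eqref{dfn:preaction_connnected_groupoid*} applied to $(y_2, a) \in \underline{t_2}_q$ and $(y_2, b) \in \underline{u_2}_q$ yields $(a, b) \in \underline{t_2^{-1} u_2}_q$; then $v_1 = t_1 t_2^{-1} u_2$ together with $v_3 = u_3 (t_2^{-1} u_2)^{-1}(t_2^{-1} u_2)$ serve as transitivity witnesses, the identity $s_1 v_1 = s_3 v_3$ being a short manipulation in $S$ using that idempotents commute.

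Define the partial action by $\alpha_q([y, s], t) = [y, t^{-1}s]$ on an appropriate domain chosen so that $\underline{s^{-1}}_{\alpha_q}$ is the relational inverse of $\underline s_{\alpha_q}$; well-definedness on classes is a one-line check, and the composition rule $\underline{st}_{\alpha_q} = \underline s_{\alpha_q}\underline t_{\alpha_q}$ follows from a direct calculation. For condition (2), the witnesses $(u_1, u_2) = (t, t^{-1}t)$ identify $[y, t^{-1}]$ with $[(y, t)q, 1]$ whenever $(y, t)q$ is defined; conversely, if $[y, t^{-1}] = [y', 1]$ via a witness $(u_1, u_2)$ with $t^{-1} u_1 = u_2$ and $(y, u_1)q = (y', u_2)q$, then setting $z = (y, u_1)q$ and invoking axiom \eqref{dfn:preaction_connnected_groupoid*} gives $(y, z) \in \underline{u_1}_q$ and $(z, y') \in \underline{u_1^{-1} t}_q$, whence $(y, y') \in \underline{u_1 u_1^{-1} t}_q \subseteq \underline{t}_q$ by axiom \eqref{dfn:preaction_downwards_agreement}, so $(y, t)q = y'$.

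Order $\mathcal X_q$ by $[y_1, s_1] \leq [y_2, s_2]$ iff there exist $t_1, t_2 \in S^1$ with $s_1 t_1 = s_2 t_2$ and $(y_1, t_1)q \leq (y_2, t_2)q$ in $\mathcal Y$; the order axioms are verified in parallel with those for $\sim$, each $\underline s_{\alpha_q}$ is an order isomorphism on its domain because translating witnesses by $t^{-1}$ on the left preserves the defining inequality, and (3) is essentially built into the definition. For $\mathcal Y$ to be an order ideal, suppose $[y', s'] \leq [y, 1]$ with witnesses $s' t' = t$ and $(y', t')q \leq (y, t) q$ in $\mathcal Y$; the downward-closure axiom \eqref{defn:down_closed condition} then yields a further witness (using the partial-identity nature of idempotents from \eqref{dfn:preaction-partial-id}) that identifies $[y', s']$ with an element of $\mathcal Y$ below $y$. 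The substantive obstacles are the transitivity arguments for $\sim$ and $\leq$, where combining two sets of witnesses requires the groupoid-like axioms plus idempotent manipulation, and the careful specification of the partial domain of $\alpha_q$ so that $\underline{s^{-1}}_{\alpha_q}$ is genuinely the relational inverse of $\underline s_{\alpha_q}$; the essential use of \eqref{defn:down_closed condition} in the order-ideal step is what distinguishes preactions from arbitrary partial data.
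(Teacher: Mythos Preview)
Your construction is essentially the paper's, with a dual convention: your $[y,s]$ encodes the formal point $x$ with $(x,s)\alpha_q=y$, while the paper's pair $(y,s)$ encodes $(y,s)\alpha_q$; correspondingly your action is $[y,s]\mapsto[y,t^{-1}s]$ versus the paper's $(y,s)\mapsto(y,st)$, and your witness condition $s_1t_1=s_2t_2$ replaces the paper's single witness $s_3$ applied on the right of both $s_1,s_2$.

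Two places where your sketch defers real work that the paper does explicitly.

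\emph{The domain of the action.} ``On an appropriate domain chosen so that $\underline{s^{-1}}_{\alpha_q}$ is the relational inverse of $\underline{s}_{\alpha_q}$'' is precisely where the paper's bookkeeping lives. The paper first restricts to the set $\mathcal X_q'=\{(y,s):\exists\,s'\in sS,\ (y,s')\in\dom(q)\}$, then sets $\dom(\underline{t}_{\alpha_q'})=\{(y,s)\in\mathcal X_q':s\in St^{-1},\ (y,st)\in\mathcal X_q'\}$, and checks the equality $\dom(\underline{st}_{\alpha_q'})=\dom(\underline{s}_{\alpha_q'})\cap(\dom(\underline{t}_{\alpha_q'}),s^{-1})\alpha_q'$ directly. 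In your convention the analogous choice is to take $\dom(\underline{t}_{\alpha_q})$ to be the set of classes admitting a representative $(y,s)$ with $s\in tS^1$; you should write this down and verify well-definedness on classes and the action axiom, rather than posit existence of a suitable domain.

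\emph{Antisymmetry.} You define $\sim$ (with equality in $\mathcal Y$) and $\leq$ (with $\leq$ in $\mathcal Y$) separately, so it is not automatic that $[y_1,s_1]\leq[y_2,s_2]\leq[y_1,s_1]$ forces $(y_1,s_1)\sim(y_2,s_2)$: two witness pairs $(t_1,t_2)$ and $(u_2,u_1)$ must be combined into a single pair giving equality, and this merge uses \cref{dfn-preaction}\eqref{defn:down_closed condition} together with \eqref{defn:preaction_preservesorder}, not just the groupoid axiom. The paper sidesteps this by defining only the preorder $\preceq$ and taking $\sim$ to be $\preceq\cap\preceq^{-1}$, after which antisymmetry of the quotient order is tautological. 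The cleanest fix for your write-up is the same: replace your $\sim$ by ``$\leq$ both ways'', and then your order-ideal and condition~(3) arguments go through unchanged.
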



      Roughly speaking, we define a set \(\mathcal X'_q\) to consist of the pairs in \(\mathcal Y
      \times S\) that we want to lie in the domain of \(\alpha_q\). In particular, if
      we can act on a point using an element \(s\) via \(\alpha_q\), we should
      be able to act on it with every left divisor of \(s\) first, in order for
      composition to work properly. The proof has the following steps: 
      \begin{itemize}
      \item  we define $\mathcal X'_q$ in \eqref{eq-defn-X-q-prime}; 
      \item 
      we define a function $\alpha_q' \colon \mathcal X_q' \times S \to \mathcal X_q'$ of $S$ on
       $\mathcal X_q'$ in \eqref{eq-defn-alpha-q-prime};
      \item prove that $\alpha_q'$ is an action on $\mathcal X_q'$ in \eqref{alpha-q-action};
       \item we define a preorder $\preceq$ on $\mathcal X_q'$ in \cref{preceq};
       \item  we show that the action $\alpha_q'$ preserves the preorder $\preceq$ in \cref{preceq-preserve}.
       \item we define the partially ordered set $\mathcal{X}_q$ to be the quotient of $\mathcal{X}_q'$ by the equivalence classes of $\preceq$ with the partial order induced by $\preceq$.
       \item we define an order-embedding \(\phi\) of $\mathcal Y$ (from \cref{thm:extend_preactions}) into \(\mathcal{X}_q\) in \eqref{de-phi} and \cref{phi-well-defined}. It follows that $\mathcal{Y}$ can be identified with an (order-isomorphic) subset of $\mathcal{X}_q$.
       \item we show that the domain of $q \colon \mathcal{Y}\times S \to \mathcal{Y}$ is downwards-closed in $\mathcal{X}_q$ under the partial order induced by $\preceq$ in \cref{claim-phi-leq}.
       \end{itemize}
        At that point we will have the necessary preliminaries to be able to give the proof of \cref{thm:extend_preactions}.

      We define:
      \begin{equation}\label{eq-defn-X-q-prime}
        \mathcal X_q'=\makeset{(y, s)\in \mathcal Y\times S}{there exists \(s'\in sS\) with \((y, s')\in \dom(q)\)}
      \end{equation}
      and $\alpha_q' \colon \mathcal X_q'\times S \to \mathcal X_q'$ by 
    \begin{equation}\label{eq-defn-alpha-q-prime}
    ((y,s),t)\alpha_q'=(y,st)
    \end{equation}
    if and only if $(y, s)\in \mathcal X_q'$ satisfies
    $s\in S t ^ {-1}$ and $(y, st) \in \mathcal X'_q$.
    
    As we did in \cref{dfn-preaction}, if $t\in S$, then we define $\underline{t}_{\alpha_q'}\colon \mathcal X_q'\to \mathcal X_q'$ by $(y, s) \underline{t}_{\alpha_q'} = ((y, s), t)\alpha_q'$ for all $(y, s)\in \mathcal X_q'$.
    With this notation
    \[\dom(\underline{t}_{\alpha_q'})=\makeset{(y, s)\in \mathcal X_q'}{\(s\in St^{-1}\) and \((y,st)\in \mathcal X_q'\)}.\]

    \begin{lem}
        \label{alpha-q-action}
        $\alpha_q'$ is an inverse semigroup action.
    \end{lem}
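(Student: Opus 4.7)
The plan is to verify the two conditions of Definition~\ref{dfn-action} for $\alpha_q'$ on $\mathcal{X}_q'$. Since the preorder $\preceq$ on $\mathcal{X}_q'$ is defined only later in the proof, at this stage $\mathcal{X}_q'$ is regarded as an unordered poset in the convention of that definition, which makes the order-isomorphism clause (1) automatic: every partial injection between discretely ordered sets is trivially order-preserving. The substance is therefore condition (2), namely the compositional identity $\underline{st}_{\alpha_q'}=\underline{s}_{\alpha_q'}\,\underline{t}_{\alpha_q'}$ and the inverse identity $\underline{s^{-1}}_{\alpha_q'}=(\underline{s}_{\alpha_q'})^{-1}$. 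One should also remark in passing that $\alpha_q'$ is well-defined as a partial function into $\mathcal{X}_q'$, which is immediate since $(y,st)\in\mathcal{X}_q'$ is part of the defining clause.

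For the compositional identity, I would compare the defining conjunctions for $(y,u)$ to lie in each side. Membership in $\dom(\underline{s}_{\alpha_q'}\,\underline{t}_{\alpha_q'})$ amounts to: $u\in Ss^{-1}$, $us\in St^{-1}$, and $(y,us),(y,ust)\in\mathcal{X}_q'$. Membership in $\dom(\underline{st}_{\alpha_q'})$ amounts to: $u\in S(st)^{-1}=St^{-1}s^{-1}$ and $(y,ust)\in\mathcal{X}_q'$. The translation between the two sides hinges on the elementary inverse-semigroup identity $u\in Ss^{-1}\iff uss^{-1}=u$, together with the commutativity of idempotents. In the $(\Rightarrow)$ direction, writing $u=s_1s^{-1}$ and $us=s_2t^{-1}$, one computes $u=us\cdot s^{-1}=s_2t^{-1}s^{-1}\in S(st)^{-1}$. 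In the $(\Leftarrow)$ direction, from $u=s_3t^{-1}s^{-1}$ the commutativity of $tt^{-1}$ and $s^{-1}s$ yields both $uss^{-1}=u$ and $ustt^{-1}=us$. The auxiliary condition $(y,us)\in\mathcal{X}_q'$ transfers from $(y,ust)\in\mathcal{X}_q'$ by observing that $ustS\subseteq usS$, so any $\dom(q)$-witness for $ust$ is also a witness for $us$. Once the domains coincide, both compositions clearly send $(y,u)$ to $(y,ust)$.

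For the inverse identity, I would set up the explicit bijection $v\leftrightarrow u$ with $v=us$ (equivalently $u=vs^{-1}$) between $\im(\underline{s}_{\alpha_q'})$ and $\dom(\underline{s^{-1}}_{\alpha_q'})$. That this restricts correctly follows from the same identity $uss^{-1}=u$ when $u\in Ss^{-1}$, together with its mirror $vs^{-1}s=v$ when $v\in Ss$, plus the same witness-transfer argument for membership in $\mathcal{X}_q'$. The values then match tautologically.

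The principal obstacle is bookkeeping rather than insight: each of the several conditions that carve $\dom(\underline{s}_{\alpha_q'})$ and $\mathcal{X}_q'$ out of $\mathcal{Y}\times S$ must be tracked through every reparametrisation step. The only genuine algebraic input is the identity $u\in Ss^{-1}\iff uss^{-1}=u$ together with the commutativity of idempotents in an inverse semigroup.
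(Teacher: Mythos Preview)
Your proposal is correct and follows essentially the same approach as the paper: the paper's proof reduces to exactly the domain comparison you describe, collapsing the conditions $v\in Ss^{-1}$ and $(y,vs)\in\mathcal{X}_q'$ into the remaining two via the same observations ($St^{-1}s^{-1}\subseteq Ss^{-1}$ and $vstS\subseteq vsS$). You are simply more explicit than the paper, which compresses everything into a three-line chain of set equalities and does not separately spell out the inverse identity or the triviality of the order-isomorphism clause.
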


    \begin{proof}
    It suffices to verify the domains:
    \begin{align*}
        \dom(\underline{s}_{\alpha_q'})\cap (\dom(\underline{t}_{\alpha_q'}),s^{-1})\alpha_q'&=\makeset{(y, v)\in \mathcal X_q'}{\(v\in Ss^{-1}\), \((y,vs)\in \mathcal X_q'\), \(v\in St^{-1}s^{-1}\) and \((y, vst)\in \mathcal X_q'\)}\\
        &=\makeset{(y, v)\in \mathcal X_q'}{ \(v\in St^{-1}s^{-1}\) and \((y, vst)\in \mathcal X_q'\)}\\
        &=\dom(\underline{st}_{\alpha_q'}).\qedhere
    \end{align*}
    \end{proof}

\begin{lem}
    \label{preceq}
    If we define $\preceq$ on $\mathcal X_q'$ by 
    \((y_1,s_1)\preceq (y_2,s_2)\) if there exists \(s_3\in S ^ 1\) with
    \[(y_1,s_1s_3), (y_2,s_2s_3)\in \dom(q) \text{ and }(y_1,s_1s_3)q\leq (y_2,s_2s_3)q,\]
    then $\preceq$ is a preorder.
\end{lem}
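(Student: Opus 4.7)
My plan is to prove reflexivity and transitivity separately.

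\textbf{Reflexivity.} Given $(y,s)\in \mathcal X_q'$, by the definition of $\mathcal X_q'$ in \eqref{eq-defn-X-q-prime}, there exists $t\in S\subseteq S^1$ with $(y,st)\in \dom(q)$. Taking $s_3=t$ as the witness, both instances of $(y,s\cdot s_3)\in\dom(q)$ hold and $(y,s s_3)q\leq (y, s s_3)q$ is trivial, so $(y,s)\preceq (y,s)$.

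\textbf{Transitivity (the main task).} Suppose $(y_1,s_1)\preceq(y_2,s_2)$ via $u\in S^1$ and $(y_2,s_2)\preceq(y_3,s_3)$ via $v\in S^1$. Set $a=(y_1,s_1u)q$, $b=(y_2,s_2u)q$, $c=(y_2,s_2v)q$, $d=(y_3,s_3v)q$, so $a\leq b$ and $c\leq d$ in $\mathcal Y$. The strategy is to build a common witness $w\in S^1$ obtained by ``bridging'' via the idempotent $u^{-1}s_2^{-1}s_2 v$. Tentatively set $w=u\cdot u^{-1}s_2^{-1}s_2\cdot v$.

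For the first half, using the inversion property \cref{dfn-preaction}\eqref{dfn:preaction_connnected_groupoid*} we get $(b,u^{-1}s_2^{-1})\in\dom(q)$ with value $y_2$, and then composition containment (\cref{lem-dfn-preaction}\eqref{defn:preaction_compositioncontainement}) gives $(b,u^{-1}s_2^{-1}s_2v)\in\dom(q)$ with value $c$. Since $a\leq b$ and the domain of $\underline{u^{-1}s_2^{-1}s_2v}_q$ is downward closed by \cref{dfn-preaction}\eqref{defn:down_closed condition}, we obtain $(a,u^{-1}s_2^{-1}s_2v)\in\dom(q)$, and by order preservation \cref{dfn-preaction}\eqref{defn:preaction_preservesorder} its value $a'$ satisfies $a'\leq c\leq d$. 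A final composition gives $(y_1,s_1w)\in\dom(q)$ with $(y_1,s_1w)q=a'$.

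For the second half, note that using commutativity of idempotents and the inverse semigroup axiom $vv^{-1}v=v$, we can factor $s_3w=s_3v\cdot (v^{-1}uu^{-1}s_2^{-1}s_2v)$, where the right-hand factor is idempotent. If $d$ lies in the domain of the partial-identity action of this idempotent (which is an element of $E(S)$), then composition containment yields $(y_3,s_3w)\in\dom(q)$ with value $d$, giving the desired inequality $a'\leq d=(y_3,s_3w)q$. The main obstacle is verifying precisely this domain condition at $d$: downward closure of domains only propagates in one direction, so we cannot immediately lift $d$ into the relevant idempotent's domain from the facts we have about $c$. I expect that the proof handles this by refining $w$, for instance by right-multiplying by the idempotent $v^{-1}s_3^{-1}s_3v$ (which fixes $d$ by composition with $(y_3,s_3v)q=d$) and verifying that the enlarged witness still satisfies the conditions from the first half via \cref{lem-dfn-preaction}, or by iterating the bridging construction symmetrically from the $y_3$ side. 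The heart of the difficulty is reconciling the two asymmetric domain conditions, and this is where the five preaction axioms must be used in concert.
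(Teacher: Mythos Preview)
Your reflexivity argument is fine and matches the paper. For transitivity, however, you are making life harder than necessary, and the ``main obstacle'' you identify is a symptom of the wrong choice of witness, not a genuine difficulty in the problem.

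The paper does not build a new witness $w$ at all: it simply takes $v$ (your notation; $s_5$ in the paper) as the witness for $(y_1,s_1)\preceq(y_3,s_3)$. The point is that $(y_3,s_3v)\in\dom(q)$ with value $d$ is \emph{already given} by hypothesis, so there is nothing to check on the $y_3$ side. All the work goes into showing $(y_1,s_1v)\in\dom(q)$ with value at most $d$. For this, one observes (using inversion and composition from \cref{dfn-preaction}\eqref{dfn:preaction_connnected_groupoid*} together with \cref{dfn-preaction}\eqref{dfn:preaction_downwards_agreement}) that $b\in\dom(\underline{u^{-1}v}_q)$ with $(b)\underline{u^{-1}v}_q=c$; then downward closure puts $a$ in this domain with value $a'\leq c\leq d$; and finally composing with $\underline{s_1u}_q$ and applying \eqref{dfn:preaction_downwards_agreement} once more (since $s_1uu^{-1}v\leq s_1v$) yields $(y_1,s_1v)q=a'$.

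In fact your ``first half'' already contains this argument in slightly more elaborate form: you establish $(y_1,s_1w)\in\dom(q)$ with value $a'\leq d$ for $w=uu^{-1}s_2^{-1}s_2v\leq v$, and one more application of \cref{dfn-preaction}\eqref{dfn:preaction_downwards_agreement} immediately gives $(y_1,s_1v)\in\dom(q)$ with the same value. So your entire ``second half'' is superfluous; the obstacle you describe (lifting $d$ into the domain of an idempotent from below) is real but never needed.
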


\begin{proof}  
By the definition of \(\mathcal X_q'\), \(\preceq\) is reflexive. It remains to show that \(\preceq\) is transitive. Suppose that \((y_1,s_1), (y_2,s_2), (y_3,s_3)\in \mathcal X_q'\) and there are \(s_4, s_5\in S\) such that
\begin{eqnarray}
\label{eq-claim-1}
(y_1,s_1s_4), (y_2,s_2s_4)\in \dom(q) &\text{ and }(y_1,s_1s_4)q\leq (y_2,s_2s_4)q\\
\label{eq-claim-2}
(y_2,s_2s_5), (y_3,s_3s_5)\in \dom(q) &\text{ and }(y_2,s_2s_5)q\leq (y_3,s_3s_5)q.
\end{eqnarray}
It suffices to show that
\[(y_1,s_1s_5), (y_3,s_3s_5)\in \dom(q) \text{ and }(y_1,s_1s_5)q\leq (y_3,s_3s_5)q.\]
By \eqref{eq-claim-2}, it is thus sufficient to show that 
\[(y_1,s_1s_5)\in \dom(q) \text{ and }(y_1,s_1s_5)q\leq (y_2,s_2s_5)q.\]

As \((y_2,s_2s_4)\in \dom(q)\) and \((y_2,s_2s_5)\in \dom(q)\), it follows from \cref{dfn-preaction}\eqref{dfn:preaction_connnected_groupoid*} that \((y_2,s_2s_4)q\in \dom(\underline{s_4^{-1}s_5}_q)\) and hence by
\cref{dfn-preaction}\eqref{dfn:preaction_downwards_agreement} and 
\cref{lem-dfn-preaction}\eqref{defn:preaction_compositioncontainement}:
\[((y_2,s_2s_4)q, s_4^{-1}s_5)q=(y_2,s_2s_4 s_4^{-1}s_5)q=(y_2,s_2s_5)q.\]
Moreover as \(\dom(\underline{s_4^{-1}s_5}_q)\) is an order ideal by \cref{dfn-preaction}\eqref{defn:down_closed condition}, it follows that \(((y_1,s_1s_4)q, s_4^{-1}s_5)\in \dom(q)\) and hence 
\[((y_1,s_1s_4)q, s_4^{-1}s_5)q=(y_1,s_1s_5)q.\]
Since $(y_1, s_1s_4)q \leq (y_2, s_2s_4)q$ and $\underline{s_4^{-1}s_5}_q$ is $\leq$-preserving (by \cref{dfn-preaction}\eqref{defn:preaction_preservesorder}), it follows that 
\[
(y_1,s_1s_5)q = ((y_1,s_1s_4)q, s_4^{-1}s_5)q \leq ((y_2,s_2s_4)q, s_4^{-1}s_5)q = (y_2,s_2s_5)q,\]
as required.
\end{proof}

    \begin{lem}
        \label{preceq-preserve}
         The action \(\alpha_q'\) is \(\preceq\)-preserving.
    \end{lem}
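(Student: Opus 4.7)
The plan is to unpack what ``$\preceq$-preserving'' means for an action: for each $t \in S$, the partial function $\underline{t}_{\alpha_q'}$ must be monotone with respect to $\preceq$. So I will fix $t \in S$ and $(y_1, s_1), (y_2, s_2) \in \dom(\underline{t}_{\alpha_q'})$ with $(y_1, s_1) \preceq (y_2, s_2)$ witnessed by some $s_3 \in S^1$ (meaning $(y_1, s_1 s_3), (y_2, s_2 s_3) \in \dom(q)$ and $(y_1, s_1 s_3)q \leq (y_2, s_2 s_3)q$), and exhibit a witness showing $(y_1, s_1 t) \preceq (y_2, s_2 t)$.

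The natural candidate for the new witness is $s_3' \coloneqq t^{-1} s_3 \in S$ (with $t^{-1} \cdot 1$ interpreted as $t^{-1}$ if $s_3 = 1 \in S^1$). The key observation is an inverse-semigroup identity: since $(y_i, s_i) \in \dom(\underline{t}_{\alpha_q'})$ forces $s_i \in St^{-1}$, writing $s_i = u_i t^{-1}$ yields
\[ s_i t t^{-1} = u_i t^{-1} t t^{-1} = u_i t^{-1} = s_i, \]
so $s_i t s_3' = s_i t t^{-1} s_3 = s_i s_3$ for $i = 1, 2$. Hence the pairs $(y_i, s_i t s_3')$ are literally equal to $(y_i, s_i s_3)$, both lie in $\dom(q)$ by hypothesis, and
\[ (y_1, s_1 t s_3')q = (y_1, s_1 s_3)q \leq (y_2, s_2 s_3)q = (y_2, s_2 t s_3')q, \]
so $s_3'$ is the required witness for $(y_1, s_1 t) \preceq (y_2, s_2 t)$.

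The main (and really only) obstacle is spotting the right candidate witness $s_3' = t^{-1} s_3$; once it is in hand, everything reduces to a one-line inverse-semigroup calculation. I do not anticipate any complications beyond slight care in the edge case $s_3 = 1 \in S^1$, which is dispatched by taking $s_3' = t^{-1}$ and running the identical calculation.
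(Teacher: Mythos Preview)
Your proposal is correct and follows essentially the same approach as the paper: both arguments use the witness $t^{-1}s_3$ and the identity $s_i t t^{-1} = s_i$ (which holds because $s_i \in St^{-1}$) to reduce the required inequality for $(y_i, s_i t)$ back to the assumed one for $(y_i, s_i)$. The paper's proof is just a terser version that leaves the new witness implicit.
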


    \begin{proof}
    If $t\in S$ and \((y_1,s_1), (y_2,s_2)\in \dom(\underline{t}_{\alpha_q'})\) and there is \(s_3\in S^1\) with
    \((y_1,s_1s_3)q\leq (y_2,s_2s_3)q\), then \(s_1, s_2\in St^{-1}\). Hence
    \[(y_1,s_1tt^{-1}s_3)q=(y_1,s_1s_3)q\leq (y_2,s_2s_3)q=(y_2,s_2tt^{-1}s_3)q\]
    so \(((y_1, s_1), t)\alpha_q' = (y_1, s_1t)\preceq (y_2, s_2t)= ((y_2, s_2), t)\alpha_q' \), and the action \(\alpha_q'\) is \(\preceq\)-preserving.
    \end{proof}

    We write $(y_1, s_1)\sim (y_2, s_2)$ if $(y_1, s_1) \preceq (y_2, s_2)$
    and $(y_2, s_2) \preceq (y_1, s_1)$, and denote by $[(y, s)]_{\sim}$ the $\sim$-equivalence class of $(y, s) \in \mathcal X_q$. 
    Let \(\mathcal X_q\) be the quotient of \(\mathcal X_q'\) by the equivalence relation $\sim$. Then $\mathcal X_q$ is partially ordered by $[(y_1, s_1)]_{\sim} \leq [(y_2, s_2)]_{\sim}$ if $(y_1, s_1) \preceq (y_2, s_2)$.

    We define $\phi\colon \mathcal Y \to \mathcal X_q$ by 
    \begin{equation}\label{de-phi}
    (y)\phi = [(z, u)]_{\sim} \text{ if }(y)q^{-1} \subseteq [(z, u)]_{\sim}.
    \end{equation}

    \begin{lem}
        \label{phi-well-defined}
         $\phi$ is a well-defined order-embedding and $\dom(\phi) = \mathcal Y$.
    \end{lem}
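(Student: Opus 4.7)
First I would verify that \(\phi\) is defined on all of \(\mathcal Y\) by establishing that \((y)q^{-1}\) is non-empty for every \(y\in \mathcal Y\). By \cref{dfn-preaction}\eqref{defn:preaction_uses_Y}, there exists \(e\in E(S)\) with \(y\in \dom(\underline{e}_q)\), and by \cref{lem-dfn-preaction}\eqref{dfn:preaction-partial-id} the partial function \(\underline{e}_q\) is a partial identity on its domain, so \((y, e)\in (y)q^{-1}\). Next, to see that \(\phi\) is well-defined as a function, I would show that \((y)q^{-1}\) is contained in a single \(\sim\)-class: given any \((z_1,u_1),(z_2,u_2)\in (y)q^{-1}\), the choice \(s_3=1\in S^1\) in the definition of \(\preceq\) immediately gives \((z_1,u_1)\preceq (z_2,u_2)\) and \((z_2,u_2)\preceq (z_1,u_1)\), since both pairs lie in \(\dom(q)\) and have the common image \(y\). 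Combined with non-emptiness, this yields \(\dom(\phi)=\mathcal Y\).

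For the order-embedding property, I would prove the biconditional \(y_1\leq y_2 \iff (y_1)\phi\leq (y_2)\phi\), which automatically yields injectivity of \(\phi\). For the forward direction, given \(y_1\leq y_2\), I would pick \(e\in E(S)\) with \(y_2\in \dom(\underline{e}_q)\); by the downward-closure property \cref{dfn-preaction}\eqref{defn:down_closed condition}, also \(y_1\in \dom(\underline{e}_q)\), so \((y_1,e)\in (y_1)q^{-1}\) and \((y_2,e)\in (y_2)q^{-1}\) are representatives of common shape, and taking \(s_3=1\) gives \((y_1,e)\preceq (y_2,e)\) directly from \(y_1=(y_1,e)q\leq (y_2,e)q=y_2\).

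The reverse direction is the most delicate point. Given \((y_1)\phi\leq (y_2)\phi\), I would pick any representatives \((z_i,u_i)\in (y_i)q^{-1}\); the inequality supplies some \(s_3\in S^1\) with \((z_1,u_1s_3),(z_2,u_2s_3)\in \dom(q)\) and \((z_1,u_1s_3)q\leq (z_2,u_2s_3)q\). To move this inequality back into \(\mathcal Y\), the key step is to apply the two-out-of-three property \cref{lem-dfn-preaction}\eqref{dfn:preaction_connnected_groupoid**}: the known facts \((z_i,y_i)\in \underline{u_i}_q\) and \((z_i,(z_i,u_is_3)q)\in \underline{u_is_3}_q\) force \((y_i,(z_i,u_is_3)q)\in \underline{s_3}_q\), so \((y_i)\underline{s_3}_q=(z_i,u_is_3)q\). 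Hence \((y_1)\underline{s_3}_q\leq (y_2)\underline{s_3}_q\), and since \(\underline{s_3}_q\) is an order isomorphism between subsets of \(\mathcal Y\) by \cref{dfn-preaction}\eqref{defn:preaction_preservesorder}, its inverse is order-preserving and hence \(y_1\leq y_2\). The main obstacle I anticipate is precisely this choreography: exploiting the freedom to choose the ``identity-on-\(y\)'' representative \((y,e)\) in the forward step, and then invoking the connectedness property to translate the action of \(s_3\) on the auxiliary second coordinate into its action on \(\mathcal Y\) itself.
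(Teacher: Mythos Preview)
Your proof is correct and follows essentially the same approach as the paper's: both establish non-emptiness of \((y)q^{-1}\) via \cref{dfn-preaction}\eqref{defn:preaction_uses_Y}, both handle the forward direction of the order-embedding by taking \(s_3=1\), and both use the two-out-of-three property \cref{lem-dfn-preaction}\eqref{dfn:preaction_connnected_groupoid**} in exactly the same way to show \((y_i,(z_i,u_is_3)q)\in \underline{s_3}_q\) and then invoke that \(\underline{s_3}_q\) is an order isomorphism. Your write-up is slightly more explicit about well-definedness (spelling out that any two elements of \((y)q^{-1}\) are \(\sim\)-related via \(s_3=1\)) and about choosing a common idempotent \(e\) for both \(y_1,y_2\) in the forward step, but the underlying argument is the same.
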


    \begin{proof}    
    If $(y)\phi = [(z, u)]_{\sim}$ and $(y)\phi = [(z', u')]_{\sim}$,
    then without loss of generality $(z, u)q = y = (z', u')q$ and so 
    $(z, u) \sim (z', u')$, meaning that $\phi$ is well-defined.
    We show that there is \((z, u)\in \mathcal X_q'\) such that \((z, u)q=y\). By \cref{dfn-preaction}\eqref{defn:preaction_uses_Y} we can pick $z=y$ and $u=e$ for some idempotent \(e\) such that $y\in \dom(e_q)$. This implies that the domain of $\phi$ is $\mathcal Y$. If $y_1, y_2\in \mathcal Y$, and $(z_1, u_1)\in (y_1)q ^{-1}$ 
    and $(z_2, u_2)\in (y_2)q ^{-1}$ for some $z_1, z_2, u_1, u_2$, then
    \begin{align*}
        y_1\leq  y_2 &\iff (z_1, u_1)q \leq (z_2, u_2)q\\
        &\phantom{\Leftarrow}\Rightarrow (z_1, u_1)\preceq (z_2, u_2)\\
        &\iff [(z_1, u_1)]_\sim \leq [(z_2, u_2)]_{\sim}\\
       &\iff y_1\phi \leq y_2\phi.
    \end{align*}
    
    Thus to conclude both that \(\phi\) is injective and order-preserving it suffices to show that \((z_1, u_1)\preceq (z_2, u_2) \) implies that \((z_1, u_1)q\leq (z_2, u_2)q \). 
    By the definition of $\preceq$, there exists \(s_3\in S\) such that \((z_1, u_1s_3)q\leq (z_2, u_2s_3)q\). By assumption, for $i\in \{1, 2\}$, $z_i\in \dom(\underline{u_i}_q)$ and $z_i\in \dom(\underline{u_is_3}_q)$. In other words, \((z_i, (z_i) \underline{u_i}_q) \in \underline{u_i}_q\) and
    \((z_i, (z_i)\underline{u_i s_3}_q) \in \underline{u_i s_3}_q\).
    Then \cref{lem-dfn-preaction}\eqref{dfn:preaction_connnected_groupoid**} tells us 
    \(((z_i)\underline{u_i}_q, (z_i)\underline{u_i s_3}_q) \in \underline{s_3}_q\). In particular, \(z_i \in \dom(\underline{u_i}_q \underline{s_3}_q)\).
    Hence 
\begin{align*}
(z_1, u_1)q & =  (z_1)\underline{u_1}_q & &\text{definition of }\underline{u_1}_q\\
& =  (z_1)\underline{u_1}_q \underline{s_3}_q \underline{s_3}_{q}^{-1} && \underline{s_3}_q \underline{s_3}_{q}^{-1} \text{ is the identity on }\dom(\underline{s_3}_q)\\
& =  (z_1)\underline{u_1s_3}_{q} \underline{s_3}_{q}^{-1} \\
& =  (z_1, u_1s_3)q \cdot \underline{s_3}_q ^{-1} \\
& \leq  (z_2, u_2s_3)q \cdot \underline{s_3}_q ^{-1} && \underline{s_3}_q\text{ is a partial order-isomorphism of } \mathcal Y\\
& =   (z_2, u_2)q.\qedhere
\end{align*}
    \end{proof}
    
    In light of \cref{phi-well-defined}, we may identify $\mathcal{Y}$ with its image under $\phi$, and define the partial order on \(\mathcal Y\) to be that 
    induced by the preorder \(\preceq\) on \(\mathcal X_q'\). We abuse notation
    by using \(\preceq\) to denote this partial order on \(\mathcal X_q\).

\begin{lem}\label{claim-phi-leq}
  The domain of \(q\) is downwards closed; that is, if $(y_1, s_1) \in \dom(q)$, and \((y_2, s_2)\preceq (y_1, s_1)\), then \((y_2, s_2)\in \dom(q)\).
\end{lem}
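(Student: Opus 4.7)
The plan is to unfold the definition of $\preceq$ and the hypotheses into inclusions of pairs in the partial functions $\underline{s}_q$, and then use the connected-groupoid-type axiom \cref{lem-dfn-preaction}\eqref{dfn:preaction_connnected_groupoid**} together with the downward-closure of each $\dom(\underline{s}_q)$ to transport membership in $\dom(q)$ from $(y_1,s_1)$ to $(y_2,s_2)$ through a common element of $\mathcal Y$.

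Concretely, I would first unpack $(y_2,s_2)\preceq(y_1,s_1)$ to obtain $s_3\in S^1$ such that $(y_2,s_2s_3), (y_1,s_1s_3)\in\dom(q)$ and, writing $a=(y_2,s_2s_3)q$ and $b=(y_1,s_1s_3)q$, one has $a\le b$ in $\mathcal Y$. Using the assumption $(y_1,s_1)\in\dom(q)$, set $z_1=(y_1,s_1)q$, so $(y_1,z_1)\in\underline{s_1}_q$ and $(y_1,b)\in\underline{s_1s_3}_q$; applying \cref{lem-dfn-preaction}\eqref{dfn:preaction_connnected_groupoid**} gives $(z_1,b)\in\underline{s_3}_q$, and then \cref{dfn-preaction}\eqref{dfn:preaction_connnected_groupoid*} yields $(b,z_1)\in\underline{s_3^{-1}}_q$.

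Next I would exploit that $\dom(\underline{s_3^{-1}}_q)$ is an order ideal by \cref{dfn-preaction}\eqref{defn:down_closed condition}, so the inequality $a\le b$ forces $a\in\dom(\underline{s_3^{-1}}_q)$; call the image $z_1'=(a)\underline{s_3^{-1}}_q$, so $(a,z_1')\in\underline{s_3^{-1}}_q$. On the other side, from $(y_2,s_2s_3)\in\dom(q)$ with value $a$, \cref{dfn-preaction}\eqref{dfn:preaction_connnected_groupoid*} gives $(a,y_2)\in\underline{(s_2s_3)^{-1}}_q=\underline{s_3^{-1}s_2^{-1}}_q$. Now \cref{lem-dfn-preaction}\eqref{dfn:preaction_connnected_groupoid**} applied to the two pairs $(a,z_1')\in\underline{s_3^{-1}}_q$ and $(a,y_2)\in\underline{s_3^{-1}s_2^{-1}}_q$ yields $(z_1',y_2)\in\underline{s_2^{-1}}_q$, whence $(y_2,z_1')\in\underline{s_2}_q$ by \cref{dfn-preaction}\eqref{dfn:preaction_connnected_groupoid*}, i.e.\ $(y_2,s_2)\in\dom(q)$, as required.

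The main obstacle is choosing the correct axiom at each of the two applications of \cref{lem-dfn-preaction}\eqref{dfn:preaction_connnected_groupoid**}: one must identify the right ``middle'' element in each triple so that the two known pairs sit in complementary factor positions, and one must invoke the downward-closure \cref{dfn-preaction}\eqref{defn:down_closed condition} on a domain of the form $\dom(\underline{s_3^{-1}}_q)$ (rather than on $\dom(\underline{s_3}_q)$) to move from $b$ down to $a$; the rest of the argument is symbol-pushing once these two ingredients are lined up.
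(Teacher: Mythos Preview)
Your proof is correct and follows essentially the same route as the paper's: unfold $\preceq$, push $(y_1,s_1)q$ through $\underline{s_3}_q$ to land at $b$, use downward closure of $\dom(\underline{s_3^{-1}}_q)$ to get $a$ into that domain, and then transport back to conclude $(y_2,s_2)\in\dom(q)$. The only cosmetic difference is the final step: the paper composes to land in $\underline{s_2s_3s_3^{-1}}_q$ and then invokes \cref{dfn-preaction}\eqref{dfn:preaction_downwards_agreement} (since $s_2s_3s_3^{-1}\le s_2$), whereas you use a second application of \cref{lem-dfn-preaction}\eqref{dfn:preaction_connnected_groupoid**}; these are interchangeable.
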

\begin{proof}
Let \((y_1, s_1) \in \dom(q)\) and \((y_2, s_2) \preceq (y_1, s_1)\). Then
by the definition of $\preceq$ there exists \(s_3 \in S^1\) such that
\[
(y_1, s_1 s_3), (y_2, s_2s_3)\in \dom(q),
\quad (y_2, s_2 s_3)q \leq (y_1, s_1s_3)q.
\]
Since \((y_1, (y_1)\underline{s_1}_q) \in \underline{s_1}_q\) and \((y_1, (y_1)\underline{s_1s_3}_q) \in \underline{s_1s_3}_q\), it follows by \eqref{dfn:preaction_connnected_groupoid*} that \(((y_1)\underline{s_1}_q, (y_1)\underline{s_1 s_3}_q) \in \underline{s_3}_q\)
\((y_1)\underline{s_1}_q \underline{s_3}_q = (y_1)\underline{s_1s_3}_q\); see \cref{fig:my_label}. In particular, \(((y_1)\underline{s_1}_q, s_3) \in \dom( q)\). 
Moreover \(((y_1) \underline{s_1}_q, s_3)q = (y_1, s_1s_3)q \geq (y_2, s_2 s_3)q\). By  \eqref{dfn:preaction_connnected_groupoid*}, \(\im(\underline{s_3}_q) = \dom(\underline{s_3^{-1}}_q) \) which is an order ideal by \eqref{defn:down_closed condition} and so \(((y_2, s_2 s_3)q) \in \dom (\underline{s_3^{-1}}_q)\). Applying \eqref{dfn:preaction_connnected_groupoid*} to 
$(y_2, (y_2,s_2s_3)q)\in \underline{s_2s_3}_q$ and \[((y_2,s_2s_3)q, ((y_2,s_2s_3)q, s_3^{-1})q) \in \underline{s_3^{-1}}_q,\] we obtain $(y_2,  ((y_2,s_2s_3)q, s_3^{-1})q)\in \underline{s_2s_3s_3^{-1}}_q$.

Thus $(y_2, s_2s_3s_3^{-1}) \in \dom(q)$ and, by \eqref{dfn:preaction_downwards_agreement},  
$\underline{s_2s_3s_3^{-1}}_q \subseteq \underline{s_2}_q$. In particular, 
$(y_2, s_2s_3s_3^{-1})q = (y_2, s_2)q$ and so
$(y_2,s_2) \in \dom(q)$.
\end{proof} 

A particular case of \cref{claim-phi-leq} is the following.

\begin{cor}\label{claim-phi-equiv-defined}
If $(y, s) \in \dom(q)$, then $[(y, s)]_\sim \subseteq \dom(q)$ and \((y, s) q\phi = [(y, s)]_\sim=(y, s)q q^{-1}\).  In other words,
 $[(y, s)]_{\sim}\phi ^ {-1} = (z, t)q$ for any $(z,t)\in [(y, s)]_{\sim}$. 
\end{cor}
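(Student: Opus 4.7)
The plan is to verify the three assertions of the corollary in turn, each of which is essentially a bookkeeping consequence of the preceding lemma together with the definitions of $\sim$ and $\phi$. The first assertion, that $[(y, s)]_\sim \subseteq \dom(q)$, will be immediate from \cref{claim-phi-leq}: every $(y', s')$ with $(y', s') \sim (y, s)$ satisfies in particular $(y', s') \preceq (y, s)$, and since $(y, s) \in \dom(q)$ by hypothesis, \cref{claim-phi-leq} gives $(y', s') \in \dom(q)$.

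For the remaining two equalities, the key observation will be that on $\dom(q)$ the relation $\sim$ coincides with ``having the same $q$-value''. To see this, I would first note that if $(y, s), (z, u) \in \dom(q)$ and $(y, s)q = (z, u)q$, then taking $s_3 = 1 \in S^1$ in the definition of $\preceq$ in \cref{preceq} immediately yields $(y, s) \preceq (z, u)$ and $(z, u) \preceq (y, s)$, hence $(y, s) \sim (z, u)$. For the converse, if $(z, u) \sim (y, s)$, then the first part of the corollary (just established) gives $(z, u) \in \dom(q)$, and the inequality $(z, u) q \leq (y, s) q$ follows from the computation already performed in the proof of \cref{phi-well-defined}, which showed that $\preceq$ on elements of $\dom(q)$ implies $\leq$ on $q$-values; the reverse inequality follows by symmetry, and since $\mathcal Y$ is a poset this forces $(z, u)q = (y, s)q$.

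Combining the two directions shows that $[(y, s)]_\sim = (y, s) q q^{-1}$ as subsets of $\mathcal Y \times S$. Finally, since $(y, s) \in (y, s) q q^{-1} = [(y, s)]_\sim$, the defining condition of $\phi$ in \eqref{de-phi} yields $((y, s) q) \phi = [(y, s)]_\sim$, which together with the previous equality completes the proof of the three displayed assertions; the ``in other words'' reformulation is then just the observation that $\phi$ is injective (by \cref{phi-well-defined}) and sends the unique element $(y,s)q \in \mathcal Y$ to $[(y,s)]_\sim$. I do not anticipate any substantive obstacle here: the only point that requires care is that the second direction of the characterization of $\sim$ reuses a computation from \emph{inside} the proof of \cref{phi-well-defined}, rather than its statement alone, so I would phrase this explicitly to avoid any impression of circularity.
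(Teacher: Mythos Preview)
Your proposal is correct and is essentially the same argument the paper intends: the paper records the corollary as an immediate consequence of \cref{claim-phi-leq} (together with the definitions of $\phi$ and $\preceq$) and gives no separate proof, while you spell out exactly the bookkeeping steps this entails. Your observation that the converse direction reuses the computation from inside the proof of \cref{phi-well-defined} (that $\preceq$ on elements of $\dom(q)$ forces $\leq$ on their $q$-values) is accurate and is indeed the one point worth making explicit.
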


\begin{proof}[Proof of \cref{thm:extend_preactions}]
We first establish part (1).
Let \([(y_2, s_2)]_{\sim} \leq [(y_1, s_1)]_{\sim}\in (\mathcal Y)\phi\).
Then by the definition of \(\phi\), there is \((y, s)\in \dom(q)\) with \([(y_1, s_1)]_{\sim}=[(y, s)]_{\sim}\).
Then by \cref{claim-phi-equiv-defined}, \((y_1, s_1)\in \dom(q)\). 
The assumption that \([(y_2, s_2)]_{\sim} \leq [(y_1, s_1)]_{\sim}\)
implies that  \((y_2, s_2)\preceq (y_1, s_1)\). Hence by \cref{claim-phi-leq}, \((y_2, s_2)\in \dom(q)\) and so \([(y_2, s_2)]_{\sim}=((y_2, s_2)q)\phi\in (\mathcal Y)\phi\), and we have shown
part (1).

    Define the partial function \(\alpha_q \colon \mathcal X_q \times S \to \mathcal X_q\) by
    \[([(y, u)]_\sim, s)\alpha_q= [((y, u), s)\alpha_q']_{\sim} 
    = [(y, us)]_\sim\]
where 
    \begin{align*}
        \dom(\alpha_q)&= \makeset{([(y, u)]_\sim, s)\in \mathcal X_q \times S}{\((y, u)\in \dom(s_{\alpha_q'})\)}\\
        &= \makeset{([(y, u)]_\sim, s)\in \mathcal X_q \times S}{\(u\in Ss^{-1}\) there is \(v\in S\) with \((y,usv)\in \dom(q)\)}.
    \end{align*}

\begin{figure}
\begin{tikzcd}
 & y_1 \arrow[dl, swap, "\underline{s_1s_3}_q"]\arrow[d, "\underline{s_1}_q"]\\
(y_1)\underline{s_1s_3}_q &  (y_1)\underline{s_1}_q \arrow[l, dashed, "\underline{s_3}_q"]\\
\rotatebox[origin=c]{90}{$\leq$} & \\
(y_2)\underline{s_2s_3}_q\arrow[dr, dashed, "\underline{s_3^{-1}}_q", swap]&  y_2 \arrow[l, "\underline{s_2s_3}_q"]\arrow[d, dashed, "s_2"]\\
& (y_2)\underline{s_2s_3}_q \underline{s_3^{-1}}_q 
\end{tikzcd}
\caption{Diagram demonstrating the arguments in the second claim within the proof of \cref{thm:extend_preactions}. The dashed lines indicate an application of \eqref{dfn:preaction_connnected_groupoid*} starting with the arrow labelled $\underline{s_3}_q$ and proceeding anti-clockwise.}
\label{fig:my_label}
\end{figure}

    It remains to check that the induced action of \(S\) on the copy $(\mathcal Y)\phi$ of \(\mathcal Y\) contained in \(\mathcal X_q\) is isomorphic to the action of $S$ on $\mathcal Y$.     
    To this end we define $\phi\oplus\id_S \colon \mathcal Y\times S \to \mathcal X_q\times S$ by
    \[
    (y, s) \phi\oplus\id_S = ((y)\phi, s),
    \]
    and 
    \[
    \mathcal Z = ((\mathcal Y)\phi\times S)\cap ((\mathcal Y)\phi)\alpha_q^{-1}.
    \]
    That is we will show:
    \[(\phi\oplus \operatorname{id}_S)\circ \alpha_q|_{\mathcal Z}\circ \phi^{-1}
    = q.\]
    We define the function on the left hand side by $Q$.
    Suppose that \((y, s)\in \dom(Q)\). It follows that \(y\in \dom(\phi)\). From \cref{dfn-preaction}\eqref{defn:preaction_uses_Y}, there exists \(e_y\in E(S)\) be such that \((y, e_y)\in \dom(q)\). Then
    \begin{align*}
        (y, s)Q&=(y, s)(\phi\oplus \operatorname{id}_S)\circ \alpha_q|_{Z}\circ \phi^{-1}\\
        &=([(y, e_y)]_\sim, s) \alpha_q |_{Z}\circ \phi^{-1}\\
          &=([(y, e_ys)]_\sim )\phi^{-1} && \text{by \cref{claim-phi-equiv-defined}}\\
           &=(y, e_ys)q \\
           &=(y, s)q && \text{by \cref{dfn-preaction}\eqref{dfn:preaction_downwards_agreement}.}
    \end{align*}
    Thus \(q |_{\dom(Q)} = Q\). If \((y, s)\in \dom(q)\), then, by the sequence of equalities above (in reverse order), $(y, s)\in \dom(Q)$. Hence $q = Q$. The map \(\alpha_q\) in the statement of the lemma can now be chosen by redefining \(\mathcal X_q:= (\mathcal X_q\setminus \im(\phi)) \cup \mathcal Y\) and the map \(\alpha_q\) by
    \[(y, s)\alpha_q= \begin{cases}
        (y, s)\alpha_q & y\notin \mathcal Y\\
        ((y)\phi, s)\alpha_q & y\in \mathcal Y.\\
    \end{cases}\]
    As we previously showed that \(Q=q\), it follows that now \(q=\alpha_q|_{(\mathcal Y\times S)\cap (\mathcal Y)\alpha_q^{-1}}\) and so part (2) of the theorem holds.
    
    Since part (3) of the theorem implies that $\alpha_q$ acts by partial order-isomorphisms on $\mathcal X_q$, the proof will be concluded by showing that part (3) holds. 
    Suppose that $[(y_1, s_1)]_{\sim}, [(y_2, s_2)]_{\sim} \in \mathcal X_q$.
    We must show that
    $[(y_1, s_1)]_{\sim}\leq [(y_2, s_2)]_{\sim}$ if and only if there exists $t\in S^1$ such that $ ([(y_1, s_1)]_{\sim}, t)\alpha_q \leq ([(y_2, s_2)]_{\sim},t)\alpha_q \in (\mathcal Y)\phi$. 

    By the definition of $\leq$, $[(y_1, s_1)]_{\sim}\leq [(y_2, s_2)]_{\sim}$ if and only if \((y_1, s_1)\preceq (y_2, s_2)\) if and only if (from the definition of $\preceq$) there exists \(t\in S^1\) such that \((y_1, s_1t), (y_2, s_2t)\in \dom(q)\) and  \((y_1, s_1t)q\leq (y_2, s_2t)q\in \mathcal Y\).
    This holds if and only if there exists \(t \in S^1\) such that 
     \(([(y_1, s_1)]_{\sim}, t)\alpha_q=[(y_1, s_1t)]_{\sim} \in (\mathcal Y)\phi\), \(([(y_2, s_2)]_{\sim}, t)\alpha_q=[(y_2, s_2t)]_{\sim} \in (\mathcal Y)\phi\), and \(([(y_1, s_1)]_{\sim}, t)\alpha_q\leq ([(y_2, s_2)]_{\sim}, t)\alpha_q\), as required. Hence part (3) of the theorem holds, and the proof of \cref{thm:extend_preactions} is complete at last.
\end{proof}

The next corollary follows immediately from \cref{thm:extend_preactions} and essentially states that preactions are precisely certain restrictions of certain actions.

\begin{cor}
    If \(S\) is an inverse monoid, \(\mathcal Y\) is a poset, and \(q \colon \mathcal Y\times S \to \mathcal Y\) is a partial function, then \(q\) is a preaction if and only if \(q\) satisfies \cref{dfn-preaction}\eqref{defn:down_closed condition} and \eqref{defn:preaction_uses_Y} and there is an action
    \(\alpha_q\) of \(S\) on a poset \(\mathcal X\supseteq \mathcal Y\) such that
\(q=\alpha_q|_{(\mathcal Y\times S)\cap (\mathcal Y)q^{-1}}\).
\end{cor}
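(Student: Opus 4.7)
The plan is to derive the forward implication directly from \cref{thm:extend_preactions} and to verify the reverse implication by checking, one by one, the three conditions in \cref{dfn-preaction} that are not already assumed in the corollary.

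For the forward direction, assume that $q$ is a preaction. Then conditions~\eqref{defn:down_closed condition} and~\eqref{defn:preaction_uses_Y} of \cref{dfn-preaction} hold by definition, and \cref{thm:extend_preactions} immediately produces a poset $\mathcal{X}_q \supseteq \mathcal{Y}$ together with an action $\alpha_q$ of $S$ on $\mathcal{X}_q$ whose restriction to $(\mathcal{Y} \times S) \cap (\mathcal{Y})\alpha_q^{-1}$ equals $q$, as required.

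For the reverse direction, assume that $q$ satisfies \eqref{defn:down_closed condition} and \eqref{defn:preaction_uses_Y} of \cref{dfn-preaction}, and that $\alpha$ is an action of $S$ on a poset $\mathcal{X} \supseteq \mathcal{Y}$ with $q = \alpha|_{(\mathcal{Y} \times S) \cap (\mathcal{Y})\alpha^{-1}}$. I would verify conditions \eqref{defn:preaction_preservesorder}--\eqref{dfn:preaction_connnected_groupoid*} of \cref{dfn-preaction} individually. For \eqref{defn:preaction_preservesorder}, $\underline{s}_{\alpha}$ is an order isomorphism between subsets of $\mathcal{X}$, and the further restriction $\underline{s}_q$ to the subset of its domain whose image additionally lies in $\mathcal{Y}$ remains an order isomorphism because $\mathcal{Y}$ inherits its partial order from $\mathcal{X}$. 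For \eqref{dfn:preaction_downwards_agreement}, if $s \leq t$ in $S$, then $s = te$ for some $e \in E(S)$, so $\underline{s}_{\alpha} = \underline{t}_{\alpha}\underline{e}_{\alpha}$; since $\underline{e}_{\alpha}^{2} = \underline{e^{2}}_{\alpha} = \underline{e}_{\alpha}$ and $\underline{e}_{\alpha}^{-1} = \underline{e^{-1}}_{\alpha} = \underline{e}_{\alpha}$, the partial function $\underline{e}_{\alpha}$ is a partial identity, hence $\underline{s}_{\alpha} \subseteq \underline{t}_{\alpha}$, and this inclusion restricts to $\underline{s}_q \subseteq \underline{t}_q$. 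For \eqref{dfn:preaction_connnected_groupoid*}, both the composition and inversion assertions transfer directly from the corresponding identities $\underline{st}_{\alpha} = \underline{s}_{\alpha}\underline{t}_{\alpha}$ and $\underline{s^{-1}}_{\alpha} = \underline{s}_{\alpha}^{-1}$, once one notes that all of the points involved lie in $\mathcal{Y}$ by the assumption that the relevant pairs lie in $\underline{s}_q$ or $\underline{t}_q$.

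The main obstacle is essentially notational rather than mathematical: keeping careful track of the interplay between the restricted domain of $q$ and the full domain of $\alpha$, and checking that the order structure on $\mathcal{Y}$ inherited from $\mathcal{X}$ interacts correctly with each clause of \cref{dfn-preaction}. No deeper issue is expected, since the heavy lifting has already been done inside the proof of \cref{thm:extend_preactions}.
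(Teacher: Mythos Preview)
Your proposal is correct and mirrors the paper's approach, which simply asserts that the corollary is immediate from \cref{thm:extend_preactions}; you supply exactly the routine verifications for the reverse direction that the paper omits. One small point worth noting: in the reverse direction you have (correctly) taken the hypothesis to be $q=\alpha|_{(\mathcal Y\times S)\cap(\mathcal Y)\alpha^{-1}}$ rather than the printed $q=\alpha_q|_{(\mathcal Y\times S)\cap(\mathcal Y)q^{-1}}$, and this is the intended reading---since $\im(q)\subseteq\mathcal Y$ the printed condition only says that $\alpha$ extends $q$, which is not enough to force \eqref{dfn:preaction_connnected_groupoid*}.
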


\section{The \(Q\)-theorem}
\label{sec:mcealister}
The goal of this section is to introduce a means of defining an inverse semigroup in terms of a natural action of an \(E\)-disjunctive semigroup on a poset, and also to show that every inverse semigroup can be defined this way (\cref{thm:Q-semis-are-semis} and \cref{thm:Q-thm}). This construction generalises that of McAlister triples for \(E\)-unitary inverse semigroups
\cite{McAlister}. This theorem was already known and proved in
\cite{OCarroll75}. The authors of the present paper only discovered
\cite{OCarroll75} at a late stage of the preparation of this paper, and proved
the characterisation independently. We hope that the reader will find the presentation here  friendlier, and more modern than that in \cite{OCarroll75}.

Recall that if \(\mathcal{Y}\) is a subset of a poset \(\mathcal{X}\), then we write \(\mathcal{Y}\downset 
= \{x \in \mathcal{X} \mid \exists y\in \mathcal{Y},\ x \leq y\}\). If $\mathcal{Y} = \mathcal{Y}\downset$, then we say that $\mathcal{Y}$ is an \textit{order ideal} in $\mathcal{X}$.
\begin{dfn}[\textbf{$Q$-semigroup}]\label{dfn-q-semigroup}
Suppose that \(T\) is an inverse semigroup acting on a poset \(\mathcal{X}\) by partial order isomorphisms and \(\mathcal{Y}\) is a meet subsemilattice and order ideal of \(\mathcal{X}\) such that the following conditions hold:
\begin{enumerate}[(1)]
\item For all \(t \in T\), \(\dom t = (\dom t)\downset\);
 \item For all \(y \in \mathcal{Y}\) there exists \(t \in T\)
with \[\dom(t|_{\mathcal{Y}}) = \bigcap\set{\dom(s|_{\mathcal{Y}})}{s\in T
\text{ such that } y\in \dom(s|_{\mathcal{Y}})}.\] 
We denote the set $\dom(t|_{\mathcal{Y}})$ by $\delta(y)$.
\item For all \(x\in \mathcal{X}\), there is \(t\in T\) such that \((x)t\in \mathcal{Y}\).
\end{enumerate}
Then we define $Q(T, \mathcal{Y}, \mathcal{X})$ to be 
\[
    Q(T, \mathcal{Y}, \mathcal{X}) = \set{(y,t)\in \mathcal{Y}\times T}{ \dom (t) = \Dom(y),\ (y)t \in \mathcal{Y}},
\]
with multiplication defined by \((y_1,t_1) \cdot (y_2,t_2) = (((y_1) t_1 \wedge y_2) t_1^{-1} , t_1 t_2)\).
\end{dfn}

We note that part (2) in \cref{dfn-q-semigroup}, can be reformulated as:
\begin{enumerate}[(1)*]
\addtocounter{enumi}{1}
\item for all $y\in \mathcal Y$, there exists $t\in T$ such that $\dom(t|_\mathcal Y)$ contains \(y\) and is the least possible with respect to containment. 
\end{enumerate}

Note that 
$\Dom \colon \mathcal{Y} \to \mathcal{P}(\mathcal{Y})$ as defined in \cref{dfn-q-semigroup} is a homomorphism (of semilattices) where the operation on $\mathcal{P}(\mathcal{Y})$ is $\cap$.

If $(G, \mathcal{Y}, \mathcal{X})$ is a McAlister triple, then it will turn out that the inverse semigroups $P(G, \mathcal{Y}, \mathcal{X})$ 
and  $Q(G, \mathcal{Y}, \mathcal{X})$ coincide.

The main theorems of this section are the following; which we prove in \cref{subsection-q-semigroups-are-semigroups} and \cref{section-proof-of-q-thm}, respectively.

\begin{theorem}
\label{thm:Q-semis-are-semis}
If \((T, \mathcal{Y}, \mathcal{X})\) satisfy the conditions in \cref{dfn-q-semigroup}, then $Q(T, \mathcal{Y}, \mathcal{X})$ is an inverse semigroup. 
\end{theorem}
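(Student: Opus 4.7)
The plan is to verify directly that $Q(T, \mathcal{Y}, \mathcal{X})$ is a regular semigroup whose idempotents commute, which gives the inverse semigroup structure via the standard characterisation. For \emph{closure}, given $(y_1, t_1), (y_2, t_2) \in Q$ one sets $y' = ((y_1)t_1 \wedge y_2)t_1^{-1}$ and checks that $(y', t_1t_2) \in Q$. Since $(y_1)t_1, y_2 \in \mathcal{Y}$ and $\mathcal{Y}$ is a meet-subsemilattice, $(y_1)t_1 \wedge y_2 \in \mathcal{Y}$; since this meet lies below $(y_1)t_1 \in \im t_1$ and condition (1) makes $\im t_1 = \dom t_1^{-1}$ downward closed, $y'$ is defined, and $y' \leq y_1 \in \mathcal{Y}$ forces $y' \in \mathcal{Y}$ because $\mathcal{Y}$ is an order ideal. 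The analogous argument on the other side shows $(y')t_1t_2 = ((y_1)t_1 \wedge y_2)t_2 \in \mathcal{Y}$. The delicate point is verifying $\dom(t_1t_2) = \Dom(y')$: one inclusion comes from the minimality in condition (2) applied to the observation $y' \in \dom(t_1t_2)$, and the reverse inclusion requires showing that $t_1t_2$ actually realises the canonical minimum.

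For \emph{associativity}, both $((y_1,t_1)(y_2,t_2))(y_3,t_3)$ and $(y_1,t_1)((y_2,t_2)(y_3,t_3))$ have second coordinate $t_1t_2t_3$, so it suffices to check equality of first coordinates. After simplification via the partial-order-isomorphism property of each $t_i$ together with downward closure, both sides reduce to $((y_1)t_1 \wedge ((y_2)t_2 \wedge y_3)t_2^{-1})t_1^{-1}$. Because $y_3$ need not lie in $\im t_2$, one cannot split the meet under $t_2^{-1}$ naively; instead, after observing that $((y_1)t_1 \wedge y_2)t_2 \wedge y_3 \leq ((y_1)t_1 \wedge y_2)t_2 \in \im t_2$ puts the meet in $\dom t_2^{-1}$, one compares both bracketings against the common element $(y_1)t_1 \wedge ((y_2)t_2 \wedge y_3)t_2^{-1}$ by applying $t_2$ to both candidate images and then using injectivity of $t_1^{-1}$ to drop the outer factor.

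For \emph{inverses and idempotents}, the natural candidate is $(y,t)^{-1} = ((y)t, t^{-1})$. Membership in $Q$ follows from the fact that $t$ restricts to an order-isomorphism between $\Dom(y)$ and $\Dom((y)t)$, giving $\dom t^{-1} = \Dom((y)t)$, while the identities $(y,t)(y,t)^{-1}(y,t) = (y,t)$ and $(y,t)^{-1}(y,t)(y,t)^{-1} = (y,t)^{-1}$ are direct substitutions into the product formula. The idempotents of $Q$ turn out to be exactly the pairs $(e, s)$ with $s \in E(T)$ and $(e)s = e$; for any two such pairs, both orderings of the product collapse to $(e_1 \wedge e_2, s_1s_2) = (e_1 \wedge e_2, s_2s_1)$, using that $E(T)$ is commutative because $T$ is inverse and that $s_i$ acts as a partial identity on its (downward-closed) domain, which contains $e_1 \wedge e_2$. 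Combining regularity with commutativity of idempotents gives that $Q$ is inverse. The recurring technical hurdle, and the hardest part of the proof, is the domain bookkeeping $\dom(t) = \Dom(y)$: at every stage one must check that the second coordinate actually realises the canonical minimum domain associated to its first coordinate, which is precisely where condition (2) of the $Q$-semigroup definition does its work and where delicate combination with the downward-closure condition (1) is required.
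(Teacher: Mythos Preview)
Your proposal is correct and follows essentially the same route as the paper: verify closure, then associativity via the two-sided inequality between $u=(((y_1)t_1\wedge y_2)t_2\wedge y_3)t_2^{-1}$ and $v=(y_1)t_1\wedge((y_2)t_2\wedge y_3)t_2^{-1}$, then exhibit $((y)t,t^{-1})$ as an inverse, and finally show idempotents are exactly the pairs $(y,e)$ with $e\in E(T)$ and that these commute. The one place where the paper is sharper than your sketch is the domain bookkeeping: rather than arguing the two inclusions $\dom(t_1t_2)=\Dom(y')$ separately via ``minimality'' as you describe, the paper first isolates the identity $(\Dom(y))t=\Dom((y)t)$ for $y\in\dom(t)$ (proved directly from the definition of $\Dom$ as an intersection of domains), and then both the closure step and the verification that $((y)t,t^{-1})\in Q$ become short algebraic computations using this identity together with $\Dom(y_1\wedge y_2)=\Dom(y_1)\cap\Dom(y_2)$; you would do well to make this identity explicit rather than leaving it implicit in your phrase ``$t$ restricts to an order-isomorphism between $\Dom(y)$ and $\Dom((y)t)$''.
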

A converse of \cref{thm:Q-semis-are-semis} also holds.
\begin{theorem}
\label{thm:Q-thm}
Every inverse semigroup \(S\) is isomorphic to some $Q(T, \mathcal{Y}, \mathcal{X})$ from \cref{dfn-q-semigroup}, where $T$ is the maximum $E$-disjunctive homomorphic image of \(S\), and \(\mathcal Y\) is the semilattice of idempotents of \(S\).
\end{theorem}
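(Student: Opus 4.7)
Let $\rho$ denote the syntactic congruence on $S$, so by \cref{lem:largest-ip-cong} and \cref{lem:syn-eq} the quotient $T := S/\rho$ is the maximum $E$-disjunctive image of $S$; write $\pi\colon S\to T$ for the (idempotent-pure) quotient homomorphism, and set $\mathcal Y := E(S)$. The plan is to build a poset $\mathcal X\supseteq \mathcal Y$ on which $T$ acts by partial order isomorphisms, then to check that $(T,\mathcal Y,\mathcal X)$ satisfies \cref{dfn-q-semigroup}, and finally to verify that $\Phi\colon S\to Q(T,\mathcal Y,\mathcal X)$ defined by $\Phi(s) := (ss^{-1},\pi(s))$ is an isomorphism.

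To construct the action of $T$, I will first produce a preaction of $T$ on $\mathcal Y$ in the sense of \cref{dfn-preaction} and then invoke \cref{thm:extend_preactions}. The preaction is the one coming from conjugation in $S$: set $(e,t)q := s^{-1}es$ whenever there exists $s\in\pi^{-1}(t)$ with $e\leq ss^{-1}$. The key well-definedness claim is that this value does not depend on the choice of $s\in\pi^{-1}(t)$. Given $s_1\rho s_2$ with $e\leq s_i s_i^{-1}$ for $i=1,2$, \cref{prop:comp-rel} ensures that $s_1$ and $s_2$ are compatible, so the element $v := s_1 s_1^{-1}s_2 = s_2 s_2^{-1}s_1$ satisfies $v\leq s_1, s_2$ and $vv^{-1} = s_1 s_1^{-1}s_2 s_2^{-1}\geq e$; a direct calculation then yields $s_1^{-1}es_1 = v^{-1}ev = s_2^{-1}es_2$. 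The remaining preaction axioms follow from the corresponding properties of the conjugation action of $S$ on $\mathcal Y$ combined with the same compatibility argument, and axiom (5) of \cref{dfn-preaction} is witnessed by $\pi(e)\in E(T)$ for each $e\in\mathcal Y$. Applying \cref{thm:extend_preactions} then gives $\mathcal X\supseteq \mathcal Y$ with $\mathcal Y$ an order ideal, together with an action $\alpha$ of $T$ on $\mathcal X$ by partial order isomorphisms extending $q$.

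Verifying the three conditions of \cref{dfn-q-semigroup} is now largely straightforward: (1) is built into the conclusion of \cref{thm:extend_preactions}; (3) holds because every $x\in\mathcal X$ is of the form $[(y,u)]_{\sim}$ for some $y\in\mathcal Y$, $u\in T^1$, and hence $\alpha(u^{-1})$ sends $x$ back into $\mathcal Y$; and for (2) the witness $t = \pi(e)\in E(T)$ works for each $e\in\mathcal Y$, since the restriction of $\alpha(\pi(e))$ to $\mathcal Y$ is the partial identity containing $e\downset$, and condition (1) forces any other $t'\in T$ defined at $e$ to have a domain containing this. The map $\Phi(s) := (ss^{-1},\pi(s))$ lands in $Q(T,\mathcal Y,\mathcal X)$ because $(ss^{-1})\pi(s) = s^{-1}s\in\mathcal Y$ by definition of $q$ and $\dom\pi(s) = \Dom(ss^{-1})$ by the minimality in (2); the homomorphism property reduces via the multiplication rule in \cref{dfn-q-semigroup} to the standard identity $s_1(s_1^{-1}s_1\cdot s_2 s_2^{-1})s_1^{-1} = (s_1s_2)(s_1s_2)^{-1}$. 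Injectivity is immediate from \cref{injR}: $\Phi(s) = \Phi(s')$ means $s\mathscr{R} s'$ and $s\rho s'$, so $s = s'$. For surjectivity, given $(y,t)\in Q(T,\mathcal Y,\mathcal X)$, the preaction together with the minimality condition $\dom t = \Dom(y)$ produces a representative $s\in\pi^{-1}(t)$ with $ss^{-1} = y$.

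The principal obstacle is verifying axiom (2) of the preaction, namely that $s\leq t$ in $T$ implies $\underline s_q \subseteq \underline t_q$. The subtlety is that $s\leq t$ in $T$ lifts to $s_1\leq t_1$ in $S$ for \emph{some} pair of representatives (e.g.\ $s_1 = s_1'(s_1')^{-1}t_0$ and $t_1 = t_0$ for any $t_0\in\pi^{-1}(t)$), but not necessarily for the representative $s_1'$ witnessing $e\leq s_1'(s_1')^{-1}$. Overcoming this requires combining the compatibility argument from the well-definedness proof with a careful comparison between $s_1's_1'^{-1}t_0 t_0^{-1}$ and $s_1's_1'^{-1}$ using idempotent-purity of $\rho$ on $E(S)$; the extension to $\mathcal X_q$ via \cref{thm:extend_preactions} then absorbs any residual discrepancy between the domains of $\underline s_q$ and $\underline t_q$ on $\mathcal Y$.
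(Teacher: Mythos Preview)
Your overall strategy matches the paper's exactly: build a preaction of $T=S/\rho$ on $\mathcal Y=E(S)$ from conjugation, extend it via \cref{thm:extend_preactions}, verify \cref{dfn-q-semigroup}, and show $s\mapsto (ss^{-1},s/\rho)$ is an isomorphism. However, there are two genuine gaps.

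First, your preaction has the wrong domain. You set $(e,t)q:=s^{-1}es$ only when some $s\in\pi^{-1}(t)$ satisfies $e\le ss^{-1}$, whereas the paper allows $s\in t'$ for \emph{any} $t'\le t$ in $T$ (\cref{lem:preaction_intro}). With the paper's definition, axiom \eqref{dfn:preaction_downwards_agreement} of \cref{dfn-preaction} is automatic: if $s\le t$ and $(e,s)\in\dom(q)$ is witnessed by $s_0\in t'\le s$, then the same $s_0$ witnesses $(e,t)\in\dom(q)$. With your definition this axiom becomes exactly the obstacle you flag in your final paragraph, and your proposed resolution does not work: \cref{thm:extend_preactions} takes a preaction as \emph{input} and cannot ``absorb residual discrepancies'' in a partial function that fails axiom \eqref{dfn:preaction_downwards_agreement}. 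The compatibility argument you sketch shows only that $s_1's_1'^{-1}$ and $s_1's_1'^{-1}t_0t_0^{-1}$ lie in the same $\rho$-class, not that $e\le t_0t_0^{-1}$; idempotent-purity of $\rho$ on $E(S)$ gives no inequality information. The fix is simply to enlarge the domain as the paper does.

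Second, condition (1) of \cref{dfn-q-semigroup} (that $\dom(\underline t_\beta)$ is downward closed in $\mathcal X$, not just in $\mathcal Y$) is \emph{not} built into \cref{thm:extend_preactions}: that theorem only asserts $\mathcal Y$ is an order ideal of $\mathcal X$ and says nothing about the domains of the extended action. The paper proves this separately by reducing to the idempotent case and then using part (3) of \cref{thm:extend_preactions} to transport the question back into $\mathcal Y$, where it follows from axiom \eqref{defn:down_closed condition} of the preaction. Your surjectivity argument is also too compressed: the paper needs a chain of set equalities, using in particular that $\dom(\underline t_\beta)=\Dom(y)$ forces $ss^{-1}/\rho=y/\rho$ and then that the domain condition on $q$ forces the witnessing $a$ to satisfy $a/\rho=s/\rho$, before concluding $y=ss^{-1}$.
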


\subsection{$Q$-semigroups are inverse semigroups}
\label{subsection-q-semigroups-are-semigroups}

In this section we give the proof of \cref{thm:Q-semis-are-semis}.

\begin{proof}[Proof of \cref{thm:Q-semis-are-semis}]
Let \(Q = Q(T, \mathcal{Y}, \mathcal X)\). We begin by showing that the multiplication defined in \cref{dfn-q-semigroup} is well-defined. 
Let \((y_1, t_1), (y_2, t_2) \in Q\). 
Then we must show \( (((y_1) t_1 \wedge y_2) t_1^{-1} , t_1 t_2)\in Q\); that is
\(\dom(t_1 t_2) = \Dom(((y_1) t_1 \wedge y_2) t_1^{-1})\) and
\(((y_1) t_1 \wedge y_2)t_1^{-1} t_1 t_2 \in \mathcal{Y}\).
 Since \((y_1) t_1 \wedge y_2 \leq y_1 t_1 \in \dom (t_1^{-1})\). Since \(\dom(t_1^{-1})\) is an order ideal, it follows that \((y_1)t_1 \wedge y_2\in \dom(t_1^{-1})\).
Similarly, $(y_1) t_1 \wedge y_2 \leq y_2\in \dom(t_2)$ and so 
$(y_1) t_1 \wedge y_2 \in \dom(t_2)$. Hence 
$((y_1) t_1 \wedge y_2)t_1^{-1} t_1 t_2 
= ((y_1) t_1 \wedge y_2) t_2\leq (y_2)t_2\in \mathcal{Y}$.
 
 It remains to show that 
 \[\Dom(((y_1)t_1 \wedge y_2) t_1^{-1})= \dom(t_1t_2).\]
 First note that for all \(t\in T\) and \(y\in \dom(t)\) we have 
 \begin{align*}
     (\Dom(y))t&=\left(\bigcap_{\substack{s\in T\\y\in \dom(s)}} \dom(s) \right)t\subseteq \bigcap_{\substack{s\in T\\y\in \dom(s)}} \dom(s)t
     =\bigcap_{\substack{s\in T\\y\in \dom(s)}} \dom(t^{-1}s)\subseteq \bigcap_{\substack{s\in T\\yt\in \dom(s)}} \dom(s) = \Dom((y)t).
 \end{align*}
Similarly, \(\Dom((y)t)t^{-1}\subseteq \Dom(y)\) and \(\Dom((y)t)=\Dom((y)t)t^{-1}t\subseteq (\Dom(y))t\).
It follows that
\begin{equation}\label{eq-whatever}
(\Dom(y))t=\Dom(yt).
\end{equation}
Hence 
\begin{align*}
    \Dom(((y_1)t_1 \wedge y_2) t_1^{-1})&=\Dom((y_1)t_1 \wedge y_2) t_1^{-1}\\
    &=(\Dom((y_1)t_1)\cap \Dom (y_2)) t_1^{-1}\\
    &=(\Dom(y_1)t_1\cap \Dom (y_2)) t_1^{-1}\\
    &=(\dom(t_1)t_1\cap \dom (t_2)) t_1^{-1}\\
    &=\dom(t_1)\cap \dom (t_2)t_1^{-1}\\
      &=\dom(t_1t_2).
\end{align*}
 
Next we prove that $Q$ is a semigroup. It suffices to show that the multiplication is associative.  
Let \((y_1,t_1),(y_2,t_2),(y_3,t_3) \in Q\). Let
\[
u = (((y_1) t_1 \wedge y_2) t_2 \wedge y_3) t_2^{-1} \text{ and }
v = (y_1) t_1 \wedge ((y_2) t_2 \wedge y_3) t_2^{-1}.
\]
We must first show that \(u = v\), which we do by showing \(u \leq v\) and \(v \leq u\). To this end  
    \[ u  = (((y_1) t_1 \wedge y_2) t_2 \wedge y_3) t_2^{-1} 
     \leq ((y_2) t_2 \wedge y_3) t_2^{-1}
     \]
     and 
     \[
    u  = (((y_1) t_1 \wedge y_2) t_2 \wedge y_3) t_2^{-1} 
     \leq  ((y_1) t_1 \wedge y_2) t_2 t_2^{-1} 
     = (y_1) t_1 \wedge y_2 
     \leq (y_1) t_1.
    \]
Thus \(u \leq v\). To show \(v \leq u\), it is sufficient to show that \((v)t_2 \leq (u) t_2\). We have
\begin{align*}
    (v) t_2 & = ((y_1) t_1 \wedge ((y_2) t_2 \wedge y_3) t_2^{-1}) t_2 
     \leq ((y_1) t_1 \wedge (y_2) t_2 t_2^{-1}) t_2 
     = ((y_1) t_1 \wedge y_2) t_2
     \end{align*}
     and
\begin{align*}
    (v) t_2 & = ((y_1) t_1 \wedge ((y_2) t_2 \wedge y_3) t_2^{-1}) t_2 
    \leq ((y_2) t_2 \wedge y_3) t_2^{-1} t_2 
    = (y_2) t_2 \wedge y_3.
\end{align*}
Hence $v\leq u$ and \(u = v\), as required. From the definition of the multiplication,
\begin{align*}
((y_1,t_1) \cdot (y_2,t_2)) \cdot (y_3,t_3) & = (((y_1) t_1 \wedge y_2) t_1^{-1}) , t_1 t_2) \cdot (y_3, t_3) \\
& = ((((y_1) t_1 \wedge y_2) t_1^{-1}) t_1 t_2 \wedge y_3) t_2^{-1} t_1^{-1}, t_1 t_2 t_3) \\
& = ((((y_1) t_1 \wedge y_2) t_2 \wedge y_3) t_2^{-1} t_1^{-1}, t_1 t_2 t_3) && ((y_1)t_1\wedge y_2)t_1^{-1}t_1 = (y_1)t_1\wedge y_2\\
& = ((u) t_1^{-1}, t_1 t_2 t_3) \\
& = ((v) t_1^{-1}, t_1 t_2 t_3) \\
& = (((y_1) t_1 \wedge ((y_2) t_2 \wedge y_3) t_2^{-1}) t_1^{-1} , t_1 t_2 t_3) \\
& = (y_1, t_1) \cdot ((y_2 t_2 \wedge y_3) t_2^{-1} , t_2 t_3) \\
& = (y_1,t_1)\cdot ((y_2,t_2) \cdot (y_3,t_3)). 
\end{align*}

We conclude the proof by showing that $Q$ is an inverse semigroup. 
We first show that \(Q\) is regular. Let \((y, t) \in Q\). We will
show that \((yt, t^{-1}) \in Q\), and that this is an inverse for \((y, t) \in Q\).
Since \((\Dom(y))t = \Dom(yt)\) by \eqref{eq-whatever}, 
\[\dom t^{-1} = (\dom t) t = (\Dom(y))t= \Dom(yt).\]
Therefore \((yt, t^{-1})
\in Q\). In addition,
\begin{align*}
    (y,t)\cdot ((y)t ,t^{-1})\cdot (y,t) & = (((y)t \wedge (y)t) t^{-1}, tt^{-1})\cdot (y, t) \\\
    & = ((y)tt^{-1}, tt^{-1})\cdot (y, t) \\
    & = (y, tt^{-1})\cdot (y, t) \\
    & = (((y) tt^{-1} \wedge y)tt^{-1}, tt^{-1} t) \\
    & = (y, t),
\end{align*}
and so \(Q\) is regular. It now suffices to show that the idempotents commute.
If \((y,t)\in Q\) is an idempotent, then \(t \in E(T)\). Conversely if \(e \in E(T)\) then \((y, e)\cdot (y, e)
= (((y)e \wedge y)e^{-1}, e^2) = (y, e)\). So $E(Q) = \set{(y, e)\in Q}{y\in \mathcal{Y}, e\in E(T)}$. These elements commute:
\[
    (y_1, e_1)\cdot (y_2, e_2) = (((y_1) e_1 \wedge y_2) e_1^{-1}, e_1 e_2)
    = (y_1 \wedge y_2, e_1 e_2)
    = (y_2, e_2)\cdot (y_1, e_1). \qedhere
\]
\end{proof}

\subsection{The proof of \cref{thm:Q-thm}}\label{section-proof-of-q-thm}

For the remainder of this section, we suppose that $S$ is a fixed inverse semigroup. 
The idea behind the proof of \cref{thm:Q-thm} is as follows. 
The semigroup $S$ has a quotient by an idempotent-pure congruence that is an \(E\)-disjunctive inverse semigroup; see \cref{sec:maximage}.
By \cref{injR}, an element \(s\in S\) is determined by its image in this quotient together with the idempotent $ss^ {-1}$. 
The set $\mathcal{Y}$ in the definition of $Q(T, \mathcal{Y}, \mathcal{X})$ is the set of idempotents $E(S)$ of $S$, and $T$ is the quotient of $S$ by its maximum idempotent-pure congruence $\rho$. 
We will prove that the function $S \to Q(T, \mathcal{Y}, \mathcal{X})$ defined by 
\[
s \mapsto (ss^{-1}, s/\rho)
\]
is an isomorphism for the correct choice of $\mathcal{X}$.

Roughly speaking, in order to capture the multiplication of $S$ in the definition of $Q(T, \mathcal{Y}, \mathcal{X})$, we need to be able to recover the idempotents $(st)(st)^{-1}$ from the idempotents $ss^{-1}$ and $tt^{-1}$. 
This is where the action comes in. The action we define comes from the conjugation (inverse semigroup) action of $S$ on $E(S)$ which is defined as follows.
We define \(\alpha \colon E(S)\times S \to E(S)\) by 
\begin{equation}\label{eq-action}
(e, s)\alpha= s^{-1}es\quad \text{ and } \quad \dom(\alpha)= \set{(e, s)\in E(S)\times S}{e\leq ss^{-1}}.
\end{equation}
It is routine to verify that this is an inverse semigroup action.
We also define \(\phi_{\alpha} \colon S \to I_{E(S)}\) to be the homomorphism associated to $\alpha$. 
In \cref{lem:preaction_intro} we will show that $T = S / \rho$ has a preaction (\cref{dfn-preaction}) on $\mathcal{Y} = E(S)$ (the is essentially the same idea as \cref{prop:comp-rel}). Hence by \cref{thm:extend_preactions} there will exist an inverse semigroup action of $T$ on a poset $\mathcal{X}$ containing $\mathcal{Y}$. 

\begin{dfn}\label{defn:doma_product}
Let \(\alpha\) be the action given in \eqref{eq-action}.
We define a multiplication on the set \(\dom(\alpha)\) by
    \[(e, s)(f, t)=(((e, s)\alpha\wedge f, s^{-1})\alpha, st).\]
   This multiplication is well-defined as \(((e, s)\alpha\wedge f, s^{-1})\alpha \leq sfs^{-1}\leq stt^{-1}s^{-1}=(st)(st)^{-1}\). We do not assert that this multiplication is associative. 
\end{dfn}

The natural magma homomorphism 
\begin{equation}\label{eq-pi}
\pi\colon \dom(\alpha)\to S \text{ is defined by }
(e, s)\pi = s.
\end{equation}
We will show that a subset of $\dom(\alpha)$ with the multiplication given in \cref{defn:doma_product} is a semigroup, by showing that the subset is (magma) isomorphic to a semigroup. 

\begin{lem}\label{lem:psi_hom}
    Let \(\psi \colon S \to \dom(\alpha)\) be the map defined by
    \[(s)\psi=(ss^{-1}, s),\]
    Then \(\psi\) is an injective magma homomorphism and \(\im(\psi)\cong S\). 
\end{lem}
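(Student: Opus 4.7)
The plan is to verify each claim in the statement in turn: well-definedness, injectivity, magma homomorphism, and isomorphism of the image. None of these should present a serious obstacle; the whole proof is essentially a direct calculation using the definition of the action $\alpha$ and the definition of multiplication on $\dom(\alpha)$ from \cref{defn:doma_product}.

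First I would check that $\psi$ is well-defined, i.e.\ that $(ss^{-1}, s) \in \dom(\alpha)$ for every $s \in S$. This is immediate from \eqref{eq-action} since $ss^{-1} \leq ss^{-1}$. Injectivity is also immediate: if $(s)\psi = (t)\psi$, then reading off the second coordinate gives $s = t$.

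The key step is verifying that $\psi$ preserves multiplication. Given $s, t \in S$, I would compute
\[
(s)\psi \cdot (t)\psi = (ss^{-1}, s) \cdot (tt^{-1}, t) = \bigl(\bigl((ss^{-1}, s)\alpha \wedge tt^{-1},\, s^{-1}\bigr)\alpha,\ st\bigr).
\]
Using $(ss^{-1}, s)\alpha = s^{-1}(ss^{-1})s = s^{-1}s$, this simplifies to
\[
\bigl((s^{-1}s \wedge tt^{-1},\, s^{-1})\alpha,\ st\bigr) = \bigl(s(s^{-1}s \cdot tt^{-1})s^{-1},\ st\bigr) = \bigl(stt^{-1}s^{-1},\ st\bigr) = \bigl((st)(st)^{-1},\ st\bigr),
\]
which is exactly $(st)\psi$. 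Here I use that $s^{-1}s$ and $tt^{-1}$ are commuting idempotents, so their meet is their product, together with the identities $ss^{-1}s = s$ and $(st)^{-1} = t^{-1}s^{-1}$.

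Finally, to conclude that $\im(\psi) \cong S$, I would observe that $\psi$ is a bijection from $S$ onto $\im(\psi)$ that preserves the (not a priori associative) multiplication on $\dom(\alpha)$; transporting the semigroup structure of $S$ through $\psi$ shows that $\im(\psi)$ with the multiplication inherited from $\dom(\alpha)$ is associative and magma-isomorphic (hence semigroup-isomorphic) to $S$.
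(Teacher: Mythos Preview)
Your proof is correct and essentially identical to the paper's: both verify well-definedness from the definition of $\dom(\alpha)$, then compute $(s)\psi\,(t)\psi$ directly via the formula in \cref{defn:doma_product} and the identity $(ss^{-1},s)\alpha=s^{-1}s$, arriving at $(stt^{-1}s^{-1},st)=(st)\psi$. The only cosmetic difference is that the paper phrases injectivity by noting $\psi\circ\pi=\id_S$, while you read off the second coordinate directly---these are the same observation.
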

\begin{proof}
    Note that $\psi$ is well-defined by the definition of the set \(\dom(\alpha)\). 
    
    Let \(s,t\in S\). Then
    \begin{align*}
        (s)\psi\ (t)\psi & =(ss^{-1}, s)(tt^{-1}, t) \\
        & = ((s^{-1}ss^{-1}s \wedge tt^{-1}, s^{-1})\alpha,
        st) \\
        & =((s^{-1}s \wedge tt^{-1}, s^{-1})\alpha, st) \\
        & = (ss^{-1}stt^{-1}s^{-1},st) \\
        & = (stt^{-1}s^{-1}, st) \\ 
        & = (st)\psi.
    \end{align*}
    Since the homomorphism \(\psi\circ \pi\) is the identity map on \(S\), the restriction \(\pi |_{\im(\psi)} = \psi ^ {-1}\) is an isomorphism from a subsemigroup of \(\dom(\alpha)\) to \(S\).
\end{proof}

We have not yet defined the poset \(\mathcal X\) which we will be using to define our \(Q\)-semigroup. However, since the set of elements of the \(Q\)-semigroup do not depend on
\(\mathcal X\), only the multiplication within the \(Q\)-semigroup. The next lemma shows that $S$ is contained in the set of elements in the $Q$-semigroup we are in the process of defining. 

\begin{lem}
    \label{dfn:M}
    Let \(\rho\) be the syntactic congruence on \(S\), and let \(\psi_{\rho} \colon S \to E(S)\times S/\rho\) by
    \[(s)\psi_{\rho}=(ss^{-1}, s/\rho).\]
    Then  \(\psi_\rho\) is injective. 
\end{lem}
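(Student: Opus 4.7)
The plan is to deduce this directly from \cref{injR} combined with a standard fact about Green's $\mathscr{R}$-relation on inverse semigroups. First I would observe that the syntactic congruence $\rho$ is idempotent-pure by \cref{lem:largest-ip-cong}, so the canonical projection $\pi \colon S \to S/\rho$ is an idempotent-pure homomorphism.

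Next I would recall the well-known fact that in an inverse semigroup two elements $s, t$ are $\mathscr{R}$-related if and only if $ss^{-1} = tt^{-1}$ (see \cite[Proposition 2.4.1]{Howie}, or note that $ss^{-1}$ is the unique idempotent in the $\mathscr{R}$-class of $s$). Now suppose that $(s)\psi_\rho = (t)\psi_\rho$ for some $s, t \in S$. Then on the one hand $ss^{-1} = tt^{-1}$, so $s \mathscr{R} t$; on the other hand $s/\rho = t/\rho$, which is to say $(s)\pi = (t)\pi$. Applying \cref{injR} to the idempotent-pure homomorphism $\pi$ on the $\mathscr{R}$-class containing $s$ and $t$ gives $s = t$, so $\psi_\rho$ is injective.

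The argument is essentially a one-line application of \cref{injR}; there is no real obstacle, since both ingredients (idempotent-purity of $\rho$ and the $\mathscr{R}$-class description via $ss^{-1}$) are already in hand by this point of the paper.
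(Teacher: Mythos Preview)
Your proposal is correct and follows exactly the same approach as the paper: from $ss^{-1}=tt^{-1}$ deduce $s\mathscr{R}t$, note that the quotient map by the (idempotent-pure) syntactic congruence is idempotent-pure, and apply \cref{injR} to conclude $s=t$. The paper's proof is essentially identical, just slightly more terse.
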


\begin{proof}
    Let \(s, t \in S\) be such that \((s)\psi_{\rho}= (ss^{-1}, s / \rho) = (tt^{-1}, t / \rho) = (t)\psi_{\rho}\). In particular, \(ss^{-1} = tt^{-1}\) and so \(s \mathscr{R} t\). Since the quotient homomorphism from $S$ to $S/\rho$ is idempotent-pure, \cref{injR} implies that this homomorphism is injective on the
    \(\mathscr{R}\)-classes of \(S\). Hence 
     \(s / \rho = t / \rho\) implies that  \(s = t\).
\end{proof}
    
    We define 
    \begin{equation}\label{eq-the-def-of-M}
    M = \im(\psi_\rho) = \{(ss^{-1}, s/\rho) : s \in S\} \subseteq E(S)\times S/\rho.
    \end{equation}
   and define multiplication on $M$ such that \(\psi_\rho \colon S\to M\) is an isomorphism.
If $\psi \colon S \to \im(\psi) \subseteq \dom(\alpha)$ is the (semigroup) isomorphism from \cref{lem:psi_hom} and $\pi \colon \dom(\alpha) \to S$ is from \eqref{eq-pi}, then  
 \(\pi|_{\im(\psi)}\psi_\rho = \psi^{-1}\psi_\rho \colon \im(\psi)\to M\) is an isomorphism.
If $(ss^{-1}, s)\in \im(\psi)$, then 
    \[(ss^{-1}, s)\psi^{-1}\psi_\rho= (s) \psi_{\rho} = (ss^{-1}, s/\rho).\]

If $s, t\in S$ and $e, f\in E(S)$ are such that $(e, s/\rho), (f, t/\rho)\in M$, then  
there is \(s_0\in s/\rho\)  such that \((s_0)\psi_\rho=(s_0s_0^{-1}, s_0/\rho)=(e, s/\rho)\). 
Since 
\begin{align*}
(e, s/\rho)(f, t/\rho) &=((e, s)(f, t))\psi^{-1}\psi_\rho  \\
& = 
((((e, s_0)\alpha\wedge f), s_0^{-1})\alpha, st)\psi^{-1}\psi_\rho \\
& = 
((((e, s_0)\alpha\wedge f), s_0^{-1})\alpha, st/\rho),
\end{align*}
it follows that
\begin{equation}\label{eq-Q-mult}
(e, s/\rho)(f, t/\rho) = 
((((e, s_0)\alpha\wedge f), s_0^{-1})\alpha, st/\rho).
\end{equation}
We will prove \cref{thm:Q-thm} by describing the multiplication of the given inverse semigroup \(S\) using only the $E$-disjunctive inverse semigroup \(S/\rho\), the idempotents $E(S)$, and an action of $S/\rho$ on a poset.  
Since \(M\) is isomorphic to \(S\), the above equation almost does this. The problem is that $\alpha$ is defined in terms of $S$, and not only in terms of $S/\rho$ and $E(S)$.
We will show that \(S/\rho\) is sufficient to capture the needed information from this action.

Since the particular choice of representative of the classes in $S/\rho$ is not important later, we denote $S/ \rho$ by $T$ so that we may refer to the elements of $T$ rather than choosing a representative for an element of $S/\rho$.

\begin{lem}\label{lem:preaction_intro}
    If $\alpha \colon E(S) \times S \to E(S)$ is the action defined in \eqref{eq-action}, then the partial function \(q \colon E(S) \times T \to E(S)\) defined by
    \[
    (e, t)q = (e, s)\alpha
    \]
    for all $(e, t) \in E(S) \times T$  such that there exists $t' \leq t$ with
    $s\in t'$ is a preaction. In particular, 
    \[
    \dom(q)  = \{(e, t) \in E(S)\times S/\rho  \mid \exists s\in t'\leq t\text{ with }(e, s)\in \dom(\alpha)\}.
    \]
\end{lem}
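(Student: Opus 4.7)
The plan is to verify the five axioms of Definition~\ref{dfn-preaction} after showing that $q$ is well-defined. The principal input is Proposition~\ref{prop:comp-rel}: if $s_1\phi, s_2\phi \leq t$ in $T$ (where $\phi \colon S \to T$ is the quotient map by $\rho$), then $(s_1 s_2^{-1})\phi \leq tt^{-1}$ is an idempotent of $T$, so by idempotent-purity $s_1 s_2^{-1}, s_1^{-1} s_2 \in E(S)$, making $s_1$ and $s_2$ compatible in $S$. Hence their meet $m = s_1 \wedge s_2$ exists in $S$ with $m \leq s_i$ and $m m^{-1} = s_1 s_1^{-1} s_2 s_2^{-1}$.

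For well-definedness, suppose $s_1, s_2$ both witness $(e, t) \in \dom(q)$, so $e \leq s_i s_i^{-1}$. Then $e \leq m m^{-1}$, and writing $m = m m^{-1} s_i$ gives $m^{-1} e m = s_i^{-1}(m m^{-1} e) s_i = s_i^{-1} e s_i$, so both witnesses assign the same value to $(e, t)q$. Axioms~\eqref{defn:preaction_uses_Y}, \eqref{defn:down_closed condition}, \eqref{dfn:preaction_downwards_agreement}, and \eqref{defn:preaction_preservesorder} then follow routinely: for \eqref{defn:preaction_uses_Y}, $y$ itself witnesses $(y, y/\rho) \in \dom(q)$; for \eqref{defn:down_closed condition}, $e' \leq e \leq s_0 s_0^{-1}$ preserves the witness; for \eqref{dfn:preaction_downwards_agreement}, a witness for $(e, s)$ with $s \leq t$ in $T$ is a witness for $(e, t)$; and for \eqref{defn:preaction_preservesorder}, each $\underline{t}_q$ restricts the order-preserving conjugation action of $S$ on $E(S)$, with its inverse supplied by the second half of~\eqref{dfn:preaction_connnected_groupoid*}.

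Axiom~\eqref{dfn:preaction_connnected_groupoid*} is the most substantive. Given witnesses $s_0, t_0$ for $(x, s)q = y$ and $(y, t)q = z$, the product $s_0 t_0$ projects to an element $\leq st$ in $T$; the key inequality $x \leq (s_0 t_0)(s_0 t_0)^{-1}$ follows by conjugating $y \leq t_0 t_0^{-1}$ by $s_0$ and using the identity $x = s_0 y s_0^{-1}$, which is valid because $x \leq s_0 s_0^{-1}$. Then $(x, st)q = t_0^{-1} s_0^{-1} x s_0 t_0 = t_0^{-1} y t_0 = z$; for the inverse clause, $s_0^{-1}$ witnesses $(y, s^{-1}) \in \dom(q)$ with value $s_0 y s_0^{-1} = x$. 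The main obstacle is the well-definedness step, which couples Proposition~\ref{prop:comp-rel} to the meet identity $m^{-1} e m = s^{-1} e s$ for $m \leq s$ with $e \leq m m^{-1}$; everything else is routine inverse-semigroup manipulation.
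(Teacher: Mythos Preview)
Your proposal is correct and follows essentially the same route as the paper. Both arguments hinge on the same key observation: if $s_1/\rho,\, s_2/\rho \leq t$, then idempotent-purity of $\rho$ forces $s_1 s_2^{-1},\, s_1^{-1} s_2 \in E(S)$, i.e.\ $s_1$ and $s_2$ are compatible. You package this via the meet $m = s_1 \wedge s_2$ and the identity $m^{-1} e m = s_i^{-1} e s_i$ (for $e \leq mm^{-1}$), whereas the paper works directly with the chain $(e, s_1)\alpha = (e, s_1 s_1^{-1} s_2)\alpha = (e, s_2)\alpha$ using that $\underline{s_1^{-1} s_2}_\alpha$ fixes $(e, s_1)\alpha$. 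The paper also handles well-definedness and axiom~\eqref{defn:preaction_preservesorder} simultaneously by proving the bi-implication $(e, s_1)\alpha \leq (f, s_2)\alpha \iff e \leq f$ for any two witnesses, which is slightly more economical than your separation into well-definedness plus ``restriction of conjugation.'' One small point of care in your write-up: the phrase ``$\underline{t}_q$ restricts the conjugation action'' is loose, since distinct $e \in \dom(\underline{t}_q)$ may require distinct witnesses; what makes the order-preservation go through is that a witness for the larger element $e'$ automatically serves for any $e \leq e'$, so a common witness is always available for comparable pairs.
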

\begin{proof}
     We first show that \(q\) is well-defined.
     Let \(s_1\in t_1\leq t\in T\), let \(s_2\in t_2\leq t\in T\), and let \((e,s_1), (f, s_2)\in \dom(\alpha)\). We will show that \((e, s_1)\alpha\leq (f, s_2)\alpha\) if and only if \(e\leq f\).
     This will not only show that \(q\) is well-defined (by considering the case when \(e=f\)) but will also show it satisfies \cref{dfn-preaction}\eqref{defn:preaction_preservesorder}.

     Since $s_1\in t_1\leq t$ and $s_2\in t_2\leq t$, it follows that \((s_1^{-1}s_2/\rho)\leq t^{-1}t\) and $s_1s_2^{-1}/\rho \leq t t ^{-1}$. Hence $s_1^{-1}s_2/\rho, s_1s_2^{-1}/\rho \in E(T)$, and so, 
     by Lallement's Lemma, both $s_1^{-1}s_2/\rho$ and $s_1s_2^{-1}/\rho$ contain an idempotent. 
     But \(\rho\) is idempotent-pure and so \(s_1^{-1}s_2, s_1s_2^{-1} \in E(S)\). Thus \(s_1^{-1} s_2\) and $s_1s_2^{-1}$ equal their inverses, that is, 
     \begin{equation}\label{eq-obvious-0}
     s_1^{-1}s_2 = s_2^{-1} s_1\quad\text{and}\quad s_1s_2^{-1}= s_2s_1^{-1}
     \end{equation}
     Similarly \(s_1s_2^{-1}=s_2s_1^{-1}\) is an idempotent. 
     If $s\in S$ we define $\underline{s}_{\alpha} \colon E(S) \to E(S)$, by $(g)\underline{s}_{\alpha} = (g, s)\alpha = s^{-1} g s$ for all $g\in E(S)$, then 
     $\dom(\underline{s}_{\alpha})$ is an order ideal. 
     This notation coincides with the notation in \cref{dfn-preaction} although we have not shown that $\alpha$ is a preaction.
     We assumed at the start of the proof that \((e,s_1), (f, s_2)\in \dom(\alpha)\) and so
\((e,s_1)\alpha\in \im(\underline{s_1}_\alpha)\) and \(e\in \dom(\underline{s_2}_\alpha)\) since \(s_1\leq s_2\) and \(e\leq f\). 
In particular, $(e, s_1)\alpha \in \dom(\underline{s_1}_{\alpha}^{-1})$, and so
$(e, s_1)\alpha \underline{s_1}_{\alpha}^{-1}= (e) \underline{s_1}_\alpha\underline{s_1}_{\alpha}^{-1} = e$.
Since $e\in \dom(\underline{s_{2}}_\alpha)$, it follows that 
$(e, s_1)\alpha \underline{s_1}_{\alpha}^{-1}\in \dom(\underline{s_2}_{\alpha})$
and so $(e, s_1)\alpha \in \dom(\underline{s_1^{-1}s_2}_{\alpha})$.
Since $s_1^{-1}s_2$ is an idempotent, $(s_1^{-1}s_2)_{\alpha}$ acts as the identity on every point in its domain, including $(e, s_1)\alpha$. In other words, 
\begin{equation}\label{eq-obvious-1}
(e, s_1)\alpha = (e, s_1s_1^{-1}s_2)\alpha. 
\end{equation}

By definition, \((e, s_1s_1^{-1}s_2)\alpha=(e, s_1s_1^{-1})\alpha\cdot \underline{s_2}_\alpha\).
Since $\underline{s_1s_1^{-1}}_{\alpha}$ is the identity $\dom(\underline{s_1}_{\alpha})$ and $e\in \dom(\underline{s_1}_{\alpha})$, it follows that 
\begin{equation}\label{eq-obvious-2}
(e, s_1s_1^{-1}s_2)\alpha = (e)\underline{s_1s_1^{-1}s_2}_\alpha = (e)\underline{s_1s_1^{-1}}_\alpha\circ\underline{s_2}_\alpha
= (e)\underline{s_2}_{\alpha} = (e, s_2)\alpha.
\end{equation}

 Therefore if \(s_1\in t_1\leq t\in T\), \(s_2\in t_2\leq t\in T\), and \((e,s_1), (f, s_2)\in \dom(\alpha)\), then 
     \begin{align*}
         e\leq f &\Rightarrow e\leq f \text{ and }(e, s_1)\alpha = (e, s_1s_1^{-1}s_2) \alpha = (e, s_2) \alpha && \text{by }\eqref{eq-obvious-1} \text{ and } \eqref{eq-obvious-2}\\
         &\Rightarrow (e, s_1) \alpha = (e, s_2) \alpha \leq (f, s_2) \alpha && \underline{s_2}_{\alpha}\text{ is an order isomorphism and }e\leq f\\
        &\Rightarrow (e, s_1) \alpha \leq (f, s_2) \alpha\\
        &\Rightarrow ((e, s_1) \alpha, s_2^{-1}) \alpha \leq ((f, s_2) \alpha, s_2^{-1}) \alpha\\
         &\Rightarrow e = (e, s_1s_2^{-1}) \alpha \leq (f, s_2s_2^{-1})\alpha = f &&s_1s_2^{-1}\in E(S)\text{ by \eqref{eq-obvious-0} and } e\in \dom(\underline{s_1s_2^{-1}}_{\alpha})
         \\
          &\Rightarrow e\leq f.
     \end{align*}
 
    That \cref{dfn-preaction}\eqref{dfn:preaction_downwards_agreement}, \eqref{defn:down_closed condition} and \eqref{defn:preaction_uses_Y} hold is clear. 
    The remaining condition is condition \eqref{dfn:preaction_connnected_groupoid*}.
    Suppose that $t_1, t_2\in T$ and $(e)\underline{t_1}_q =f$ and $(f)\underline{t_2}_q = g$. We must show that  
    $(e)\underline{t_1t_2}_q =  g$ and $(f)\underline{t_1^{-1}}_q = e$.

    We first want to show that \(e \in \dom(\underline{t_1t_2}_q)\). Since
    \(e \in \dom(\underline{t_1}_q)\), from the definition of \(q\) 
    there exists \(s_1 \in t_1' \leq t_1\) such that \((e, s_1) \in
    \dom(\alpha)\). Similarly, there exists \(s_2 \in t_2' \leq t_2\) with \((f, s_2) \in \dom(\alpha)\). We want to show that there exists
    \(s_3 \in t_3' \leq t_1 t_2\) with \((e, s_3) \in \dom(\alpha)\).
    
    Set \(s_3 = s_1 s_2\). By assumption \((e, s_1), (f, s_2) \in \dom(\alpha)\),
    and so 
    \((e, s_1)\alpha = (e) \underline{s_1}_\alpha =(e, s_1)\alpha = (e, t_1)q = f\)
    and, similarly,  \((f, s_2)\alpha = g\).
    Thus, since $\alpha$ is an action, $g = (f, s_2)\alpha = ((e, s_1)\alpha, s_2)\alpha = (e, s_1s_2)\alpha$. In particular, 
    \((e, s_1 s_2) \in \dom(\alpha)\). 
    Since $s_1\in t_1'\leq t_1$ and $s_2\in t_2'\leq t_2$, it follows that
    \(s_1 s_2 \in t_1' t_2' \leq t_1t_2\) and so
    \[
    (e)\underline{t_1t_2}_q = (e, t_1t_2)q 
                            = (e, s_1s_2)\alpha 
                            = (f, s_2)\alpha
                            = g,
    \]
    as required.

    It remains to show that $(f)\underline{t_1^{-1}}_q = e$. 
    Again,
    we begin by showing that \(f \in \dom(\underline{t_1^{-1}}_q)\).
    Since $(e, s_1)\in \dom(\alpha)$, there exists \(s_1 \in t_1' \leq t_1\) such that
    \((e, s_1) \in \dom(\alpha)\). Since \(\alpha\) is an inverse semigroup
    action, \(((e)\underline{s_1}_\alpha, s_1^{-1}) \in \dom(\alpha)\).
    As we showed earlier, \((e)\underline{s_1}_\alpha = f\), and so
    \((f, s_1^{-1}) \in \dom(\alpha)\) and \((f, t_1^{-1}) \in \dom(q)\).
    Thus \((f)\underline{t_1^{-1}}_q = e\),
    as required.\qedhere
\end{proof}

We can now prove \cref{thm:Q-thm}.  

\begin{theorem_no_number}[\ref{thm:Q-thm}]
Every inverse semigroup \(S\) is isomorphic to some $Q(T, \mathcal{Y}, \mathcal{X})$ from \cref{dfn-q-semigroup}, where $T$ is the maximum $E$-disjunctive homomorphic image of \(S\), and \(\mathcal{Y}\) is the semilattice of idempotents of \(S\).
\end{theorem_no_number}

\begin{proof}
Let \(S\) be any inverse semigroup; let \(\mathcal{Y}=E(S)\);
let $\rho$ be the syntactic congruence on $S$;
let $T = S/ \rho$; and 
let $\phi_{\rho} \colon S \to T$ be the natural homomorphism defined by $(s)\phi_{\rho} = s/\rho$.
We also recall the following:
\begin{itemize}
\item let $\alpha \colon E(S) \times S \to E(S)$ be the inverse semigroup action defined by $(e, s) \alpha = s ^ {-1}es$ (see \eqref{eq-action})
for all $(e, s) \in E(S) \times S$ such that $e\leq ss ^ {-1}$;
\item let $\pi \colon \dom(\alpha) \to S$ be defined by $(e, s)\pi = s$ (see \eqref{eq-pi}); 
\item let $\psi \colon S \to \dom(\alpha)$  be defined by 
$(s)\psi = (ss ^ {-1}, s)$ (\cref{lem:psi_hom});
\item let
$M = \set{(ss^{-1}, s/\rho)\in E(S)\times T}{s\in S}$ as defined in \eqref{eq-the-def-of-M};
\item let $q \colon E(S)\times T \to T$ be the preaction defined in \cref{lem:preaction_intro};
\item  let \(\mathcal{X}\) be the poset defined in  \cref{thm:extend_preactions} (with respect to the preaction $q$) that contains $\mathcal{Y}$;
\item let $\beta\colon \mathcal{X}\times S \to \mathcal{X}$ be the action given in \cref{thm:extend_preactions} such that $\beta$ restricted to $(\mathcal{Y}\times S)\cap (\mathcal{Y})\beta ^{-1}$ equals $q$.
\end{itemize}

We will verify that
$T$, $\mathcal{Y}$, and $\mathcal{X}$ satisfy the conditions in \cref{dfn-q-semigroup}. Firstly, by \cref{thm:extend_preactions}, $\beta$ is an inverse semigroup action of \(T\) on \(\mathcal X\) by partial order isomorphisms, \(\mathcal Y\) is a meet
subsemilattice, and order ideal, of $\mathcal{X}$.

\begin{enumerate}[(1)]
    \item 
        Let $t\in T$ be arbitrary. We must show that $\dom \underline{t}_{\beta}$ is an order ideal of $\mathcal{X}$. We can assume without loss of generality that \(t\) is an idempotent because \(\dom(\underline{t}_\beta)=\dom(\underline{tt^{-1}}_\beta)\) for all $t\in T$.
        Let $a\in\mathcal{X} \) and \(b \in \dom \underline{t}_\beta\subseteq \mathcal{X}\) be such that
        \(a \leq b\). By \cref{thm:extend_preactions}(3), there exists $s\in T^1$ such that
        \((a, s) \beta, (b, s) \beta \in \mathcal{Y}\) and \((a, s) \beta \leq (b, s) \beta\). In other words, 
        $(a)\underline{s}_{\beta} \leq (b)\underline{s}_{\beta}$. 
        Since \(\dom(\underline{s^{-1}ts}_q)\) is an order ideal in \(\mathcal{Y}\) (by \cref{dfn-preaction}(1))  and \(\mathcal{Y}\) is an order
        ideal in \(\mathcal X\), \(\dom(\underline{s^{-1}ts}_q) \cap \mathcal{Y}\) is an order ideal in \(\mathcal{X}\). By assumption $t$ is an idempotent, and so \(s^{-1}ts\) is an idempotent also. It follows that \(\dom(\underline{s^{-1}t}_\beta) \cap \mathcal{Y}=\dom(\underline{s^{-1}ts}_\beta) \cap \mathcal{Y}=\dom(\underline{s^{-1}ts}_q) \cap \mathcal{Y}\) is an order ideal of \(\mathcal{X}\). 
        
        By assumption \(b \in \dom \underline{t}_\beta\) and so \((b)\underline{s}_{\beta} \in \dom(\underline{s^{-1}t}_\beta) \cap \mathcal{Y}\). Finally, since $\dom(\underline{s^{-1}t}_\beta) \cap \mathcal{Y}$ is an order ideal,  \((a)\underline{s}_{\beta} \in \dom(\underline{s^{-1}t}_\beta)\), and so \(a \in \dom(\underline{t}_\beta)\), as required.
    \item 
    Firstly,  for all \((y, t)\in \mathcal{Y}\times T\), the following hold: 
    \begin{align}\label{eq-iff}
    \begin{split}
    y\in \dom(\underline{t}_{\beta}) & \iff y\in \dom(\underline{tt^{-1}}_\beta) \\
    & \iff y\in \dom(\underline{tt^{-1}}_q) \\
    & \iff \text{there exists }e\in S \text{ such that } e/\rho\leq tt^{-1} \text{ and }y\leq e \\
    & \iff y/\rho \leq tt^{-1}.
    \end{split}
    \end{align}
    So if \(y\in \mathcal{Y}\), then setting $t = y / \rho\in T$ and repeatedly applying \eqref{eq-iff} we obtain
    \begin{align*}
    \delta(y) = \dom(\underline{t}_\beta|_{\mathcal{Y}}) 
    &= \{z \in \mathcal{Y} \mid z / \rho \leq tt^{-1}\} \\
    &= \bigcap\set{\dom(\underline{t_1}_\beta|_{\mathcal{Y}})}{t_1\in T
\text{ such that } y / \rho \leq t_1t_1^{-1}}\\
    & =
    \bigcap\set{\dom(\underline{t_1}_\beta|_{\mathcal{Y}})}{t_1\in T
\text{ such that } y\in \dom(\underline{t_1}_\beta|_{\mathcal{Y}})}.
    \end{align*}
    \item We must show that for all \(x\in \mathcal{X}\), there is \(t\in T\) such that \((x,t)\beta\in \mathcal{Y}\). This is implied by~\cref{thm:extend_preactions}(3).
\end{enumerate}

Since $S$ and $M$ are isomorphic, by the definition of the multiplication of $M$, it suffices to show that $M$ and $Q(T, \mathcal{Y}, \mathcal{X})$ coincide (as semigroups).

The following holds:
\begin{align*}
    Q(T, \mathcal{Y}, \mathcal{X}) 
     & = \makeset{(y,t)\in \mathcal{Y}\times T}{\(\dom (\underline{t}_{\beta}) = \Dom(y) = \dom(\underline{y/\rho}_\beta|_{\mathcal{Y}}),\ (y, t)\beta \in \mathcal{Y}\)}\\
    &=\makeset{(y, s/\rho)\in \mathcal{Y}\times T}{\(\dom(\underline{s/\rho}_\beta) \cap \mathcal{Y}= \dom(\underline{y/\rho}_\beta)\cap \mathcal{Y}\), \((y, s/\rho)\beta \in \mathcal{Y}\)}\\
     &=\makeset{(y, s/\rho)\in \mathcal{Y}\times T}{\( \dom(\underline{ss^{-1}/\rho}_\beta) \cap \mathcal{Y}= \dom(\underline{y/\rho}_\beta)\cap \mathcal{Y}\), \((y, s/\rho)\beta \in \mathcal{Y}\)}\\
       &=\makeset{(y, s/\rho)\in \mathcal{Y}\times T}{\( \bigcup (ss^{-1}/\rho)\downset = \bigcup (y/\rho)\downset\subseteq\mathcal{Y}\), \((y, s/\rho)\beta \in \mathcal{Y}\)} && (\text{by } \eqref{eq-iff})\\
      &=\makeset{(y, s/\rho)\in \mathcal{Y}\times T}{\(  (ss^{-1}/\rho)\downset =  (y/\rho)\downset\), \((y, s/\rho)\beta \in \mathcal{Y}\)}\\
    &=\makeset{(y, s/\rho)\in \mathcal{Y}\times T}{\(  (ss^{-1}, y)\in \rho\), \((y, s/\rho)\beta \in \mathcal{Y}\)}\\
    &=\makeset{(y, s/\rho)\in \mathcal{Y}\times T}{\(  (ss^{-1}, y)\in \rho\), \((y, s/\rho)q \in \mathcal{Y}\)}\\
    &=\makeset{(y, s/\rho)\in \mathcal{Y}\times T}{\(  (ss^{-1}, y)\in \rho\), \((y, s/\rho)\in \dom(q)\)}\\
        &=\makeset{(y, s/\rho)\in \mathcal{Y}\times T}{\(  (ss^{-1}, y)\in \rho, \exists a\in S\) with \(a/\rho\leq s/\rho\) and \(y\leq aa^{-1}\)} && (\text{def. of }q)\\
        & = Q_1.
    \end{align*}
If $(y, s'/\rho)\in Q_1$, then there exists $s\in s'/\rho$ such that \((ss^{-1}, y)\in \rho\) and there exists \(a\in S\) with \(a/\rho\leq s/\rho\) and \(y\leq aa^{-1}\). Thus \(ss^{-1}/\rho=y/\rho\leq aa^{-1}/\rho\). 
Moreover
\[ss^{-1}/\rho\leq aa^{-1}/\rho\iff ss^{-1}a/\rho\leq aa^{-1}a/\rho\iff s/\rho\leq a/\rho \iff s/\rho= a/\rho .\]
Thus
    \begin{align*}
        Q_1             &=\makeset{(y, s/\rho)\in \mathcal{Y}\times T}{\(  (ss^{-1}, y)\in \rho, \exists a\in S\) with \(a/\rho= s/\rho\) and \(y\leq aa^{-1}\)}\\
            &=\makeset{(y, s/\rho)\in \mathcal{Y}\times T}{\(  (ss^{-1}, y)\in \rho\), and \(y\leq ss^{-1}\)}\\
      &=\makeset{(y, s/\rho)\in \mathcal{Y}\times T}{\(  (ss^{-1}, y)\in \rho\), \(y\leq ss^{-1}\)} && \\
        &=\makeset{(y, s/\rho)\in \mathcal{Y}\times T}{\(  y= ss^{-1}\)} \\
    &=\makeset{(ss^{-1}, s/\rho)\in \mathcal{Y}\times T}{\(s\in S\)}=M.
\end{align*}
We have shown that $M$ and $Q(T, \mathcal{Y}, \mathcal{X})$ are equal as sets. That their multiplications also coincide is precisely \eqref{eq-Q-mult}, and so the proof is complete.
\end{proof}

\section*{Acknowledgements}
The authors would like to thank James East and Peter Hines for some helpful mathematical discussions.
The authors were supported by a Heilbronn Institute for Mathematical Research Small Grant during part of this work. The authors would like to thank the anonymous referee for their helpful comments and careful reading of the paper. The second named author was supported by a London Mathematical Society Early Career Fellowship and the Heilbronn Institute for Mathematical Research during this work. The authors would also like to thank the University of Manchester for hosting them during part of the work on this paper. 

\printbibliography
\end{document}